\documentclass[11pt,oneside]{amsart}
\usepackage{amsmath,amsthm,amssymb,amscd,fancybox,esint}
\usepackage[a4paper,left=3cm,right=3cm,top=3cm,bottom=3cm]{geometry}
\usepackage{cite}

\allowdisplaybreaks
\renewcommand{\epsilon}{\varepsilon}
\renewcommand{\phi}{\varphi}

 \newcommand{\bZ}{\mathbb{Z}}
\newcommand{\bR}{\mathbb{R}} \newcommand{\bN}{\mathbb{N}}
\newcommand{\cF}{\mathcal{F}} 
\newcommand{\dS}{\mathcal{S}(\bR^d)} 

\newcommand{\ddS}{\mathcal{S}'(\bR^d)}

\newcommand{\supp}{\operatorname{supp}}

\newcommand{\Zn}{\mathbb{Z}^d}

\newcommand{\Rd}{\mathbb{R}^d}

\newcommand{\brac}[1]{\langle #1\rangle}
\newtheorem{theorem}{Theorem}[section]
\newtheorem{lemma}[theorem]{Lemma}
\newtheorem{proposition}[theorem]{Proposition}

\newtheorem{corollary}[theorem]{Corollary}
\theoremstyle{definition}
\newtheorem{definition}[theorem]{Definition}
\newtheorem{example}[theorem]{Example}
\theoremstyle{remark}
\newtheorem{remark}[theorem]{Remark}
\numberwithin{equation}{section}
\renewcommand{\d}{\,\text{d}}
\def\dx{{\,\operatorname{d}\!x}}

\def\dt{{\,\operatorname{d}\!t}}

\def\bZ{{\mathbb Z}}

\def\bN{{\mathbb N}}

\def\bC{{\mathbb C}}
\def\bR{{\mathbb R}}

\def\cB{\mathcal{B}}
\def\cC{\mathcal{C}}
\def\cD{{\mathbb{R}^d}}

\def\cF{\mathcal{F}}
\def\Z{\mathbb{Z}}

\def\cP{\mathcal{P}}
\def\cQ{\mathcal{Q}}
\def\cR{\mathcal{R}}
\def\cS{\mathcal{S}}

\newcommand{\C}{\mathbb{C}}
\newcommand{\R}{\mathbb{R}}
\newcommand{\vf}{\mathbf{f}}
\newcommand{\vg}{\mathbf{g}}

\newcommand{\vx}{\mathbf{x}}
\newcommand{\vc}{\mathbf{c}}
\newcommand{\vd}{\mathbf{d}}

\newcommand{\vs}{\mathbf{s}}

\newcommand{\vy}{\mathbf{y}}
\newcommand{\vz}{\mathbf{z}}
\newcommand{\vt}{\mathbf{t}}
\providecommand{\abs}[2][]{#1\lvert#2#1\rvert}
\providecommand{\norm}[2][]{#1\lVert#2#1\rVert}
\newcommand{\esssup}{\mathop{\rm ess{\,}sup}}
\def\cprime{$'$} 
\begin{document}
\title[Matrix-weighted Anisotropic Smoothness Spaces]{ Matrix-weighted Anisotropic Smoothness Spaces}
\author[M.\ Nielsen]{Morten Nielsen}
\thanks{This work was supported by the Independent Research Fund Denmark, grant no.\ 5281-00046B}
\address{Department of Mathematical Sciences\\ Aalborg
  University\\ Thomas Manns Vej 23\\ DK-9220 Aalborg East\\ Denmark}
\email{mnielsen@math.aau.dk}
\subjclass[2020]{Primary 	42B15,   42B35, 	46E36; Secondary 
46E40}
\begin{abstract}
Given a quasi-norm on $\bR^d$ induced by a one-parameter dilation group, we consider matrix weights $W$ in an adapted Muckenhoupt class $\mathbf{A}_p$, $0 < p < \infty$, and use these weights to introduce and study anisotropic matrix-weighted smoothness spaces in both continuous and discrete settings. The spaces are constructed by means of a decomposition method in the frequency domain. We prove the equivalence of the continuous and discrete spaces using suitably adapted tight frames. Compatible notions of molecules and almost diagonal matrices are also introduced, and applications to the study of Fourier multipliers and pseudo-differential operators on vector-valued smoothness spaces are given.
\end{abstract}
\keywords{Matrix weighted space, Besov space, decomposition space, almost diagonal matrix,
Fourier multiplier operator, pseudo-differential operator} 
\maketitle

\section{Introduction}
Given a measurable matrix-valued function $W\colon
\cD\to\C^{N\times N}$ that is positive definite almost everywhere, 
the matrix-weighted $L^p$-space $L^p(W)$, $0< p<\infty$, is defined
 as the family of
measurable functions $\vf\colon \cD\to\C^N$ satisfying  
\begin{equation}\label{eq:lp}
  \norm{\vf}_{L^p(W)}:=\Biggl(\int_{\cD}\abs{W^{1/p}(x)\vf(x)}^p\,dx\Biggr)^{1/p}<\infty.
\end{equation}
Taking the quotient by 
 $$\{\vf\colon \cD\to\C^N;\norm{\vf}_{L^p(W)}=0\},$$
turns $L^p(W)$ into a (quasi-)Banach space. 

The study of linear operators on matrix-weighted $L^p$-spaces has attracted a great deal of attention recently, see, e.g., \cite{IsraPottTrei:2022a,NielSiki:2021a,Roud:2003a,Niel:2010a}, due to the fact that the setting provides a natural extension of scalar-weighted $L^p$-spaces to a vector-valued setting and the extended setting opens a number of interesting mathematical questions related to classical results on Muckenhoupt weights in harmonic analysis. A highlight in the theory of matrix weights is the formulation of a suitable matrix Muckenhoupt $A_p$ condition by Treil and Volberg that completely characterizes boundedness of the Hilbert transform on $L^p(W)$ for weights on $\mathbb{R}$, $1 < p < \infty$, see \cite{TreiVolb:1997a}, with extensions to general $p$ given by Nazarov and Treil \cite{NazaTrei:1996a} and, via a different approach, by Volberg \cite{Volb:1997a}.

 As is well known in the scalar case, one can use $L^p$-spaces to build a variety of useful smoothness spaces by imposing restrictions on suitable local components of functions measured by a (weighted) $L^p$-norm. Roudenko was the first to apply such an approach in the matrix-weighted setup, based on $L^p(W)$ spaces as defined in Eq.\ \eqref{eq:lp}, see \cite{Roud:2003a}, where she introduced a very natural notion of matrix-weighted Besov spaces $B^s_{p,q}(W)$ provided that the weight $W$ satisfies a Muckenhoupt $A_p$ condition. 
 This work was later extended by Frazier and Roudenko, first to the full range $0<p\le 1$ for Besov spaces \cite{FrazRoud:2004a}, and subsequently to matrix-weighted Triebel-Lizorkin spaces \cite{ FrazRoud:2021a}.
  An extensive recent study by Bu et al.\ of various properties of matrix-weighted Besov and Triebel-Lizorkin type spaces, including almost diagonal operators and molecular decompositions, can be found in \cite{BuHytoYang:2024a,BuHytoYang:2025b,BuHytoYang:2025c}, formulated for the classical dyadic setting.

   Besov spaces are created by measuring $L^p$-norms of local components of functions corresponding to a dyadic decomposition of the frequency space. However, it was observed in the scalar case by Triebel \cite{Trie:1983a} that the same general decomposition approach, using other partitions of the frequency space, can yield other types of useful smoothness spaces such as modulation spaces that are associated with a uniform decomposition of the frequency space. The decomposition approach was formalized and studied further by Feichtinger and Gr\"obner \cite{FeicGrob:1985a, Feic:1987a}.  
Another benefit from the decomposition approach is that it provides a natural framework to study anisotropic function spaces, and the anisotropic setup can often be treated with only a minimal increase in technical complexity, see, e.g.,\ \cite{BoruNiel:2008a,CleaGeor:2020a}. 
 
The main contribution of the present paper is to present a construction of anisotropic matrix-weighted smoothness spaces of vector-valued functions defined on $\mathbb{R}^d$, extending the scalar-weighted anisotropic decomposition spaces of \cite{BoruNiel:2008a} to the matrix-weighted setting.
 The spaces correspond to flexible but
structured decompositions of the frequency space $\bR^d$.  The
decompositions are generated by a one-parameter group of dilations on
$\bR^d$ and a suitable decomposition function. The group of dilations
incorporates the anisotropy of the construction, while the
decomposition function governs the general splitting of the frequency
space.  
 There is, however, one significant technical obstacle to such a construction in the matrix-weighted case. The ``standard'' Muckenhoupt $A_p$ condition for matrix weights turns out not to be compatible with the mentioned general anisotropic structure. A solution to this incompatibility was proposed by the author in \cite{mn_geo}, where modified Muckenhoupt conditions are introduced to create the needed compatibility with the anisotropic setup. We shall rely on the same approach in the present article to incorporate the anisotropic structure.

The structure of the paper is as follows. In Section \ref{sec:HS}, we first recall some needed facts about quasi-norms on $\bR^d$ induced by a one-parameter dilation group, and we define Muckenhoupt conditions for matrix weights adapted to the quasi-norm setting following \cite{{mn_geo}}. In Section \ref{sec:3} we 
introduce the main class of matrix-weighted smoothness spaces. Questions related to completeness and stability of the smoothness spaces are also addressed in Section~\ref{sec:3}.  

In Section \ref{sec:ms} we introduce a suitable family of weighted coefficient spaces of vector-valued sequences together with a simple construction of tight frames for the matrix-weighted smoothness spaces. A complete discrete characterization of the matrix-weighted smoothness spaces is then obtained using the frames. An algebra of discrete almost diagonal matrices adapted to the matrix-weighted decomposition spaces is introduced in Section \ref{sec:4}, which allows us to define a natural notion of molecules for matrix-weighted decomposition spaces. The almost diagonal matrices and molecules may be used to simplify the study of various operators on the matrix-weighted smoothness spaces, making it much easier to obtain various boundedness results for, e.g., partial differential operators. As an example of the almost diagonal approach, we conclude the paper in Section \ref{sec:5} with a study of Fourier multipliers on matrix-weighted decomposition spaces.

\section{Anisotropic quasi-norms on $\bR^d$ and Compatible Muckenhoupt Weights}\label{sec:HS}
Let us now define spaces of homogeneous type on $\cD$ with a suitable anisotropic structure that will later serve as the framework for building smoothness spaces using suitable coverings of the frequency space. The anisotropic structure on $\cD$ is created using a quasi-norm induced by a one-parameter group of dilations.         

We let $|\cdot|$ denote the Euclidean norm on $\bR^d$ induced by the corresponding inner product
$\langle \cdot,\cdot\rangle$ and let $A$ be a given real symmetric $d\times d$ matrix with strictly positive eigenvalues. For $t>0$, define the group of dilations $\delta_t:\bR^d\to\bR^d$
  by $\delta_t := \exp (A\ln t)$ and let $\nu:=\textrm{trace}(A)$. Dilation groups with this structure have been studied in, e.g., \cite[Part II]{SteiWain:1978a}.  
  
According to \cite[Proposition 1.7]{SteiWain:1978a}, there exists a strictly positive
symmetric matrix $P$ such that for all $\xi \in \bR^d$,
\begin{equation}\label{eq:P}
[\delta_t\xi]_P:=\langle P\delta_t\xi,\delta_t\xi \rangle^{\tfrac{1}{2}}
\end{equation}
is a strictly increasing function of $t$. This helps us introduce a quasi-norm $|\cdot|_A$ associated with $A$.
\begin{definition}\label{def:A}
We define the function $|\cdot|_A:\bR^d\to \bR_+$ by $|0|_A:=0$ and for
$\xi\in\bR^d\backslash\{0\}$ by letting $|\xi|_A$ be the unique solution $t$ to the
equation $[\delta_{1/ t}\xi]_P=1$.
\end{definition}

For notational convenience, let $\brac{\cdot}_A:=1+|\cdot|_A$ denote the \textit{bracket function}. It can be verified (see, e.g., \cite{SteiWain:1978a, BoruNiel:2008a}) that:
\begin{itemize}
\item There exists a constant $C_A\geq 1$ such that
\begin{equation}\label{eq:Qtriangle}
|\xi+\zeta|_A\le C_A(|\xi|_A+|\zeta|_A), \, \xi,\zeta \in \bR^d.
\end{equation}
\item We have the homogeneity property: $|\delta_t\xi|_A=t|\xi|_A$, $t>0$.
\item There exist constants $C_1,C_2,a_-,a_+>0$ such that
\begin{equation}\label{natasha}
C_1\min(|\xi|^{a_-}_A,|\xi|^{a_+}_A) \le |\xi|\le
C_2\max(|\xi|^{a_-}_A,|\xi|^{a_+}_A), \qquad \xi \in \bR^d.
\end{equation}
\item For any $\epsilon>0$, there exists $C_\epsilon<\infty$ such that
\begin{equation}\label{eq:integral}
\int_{\bR^d}\brac{x}_A^{-\nu-\epsilon}\,\d x\leq C_\epsilon.
\end{equation}

\end{itemize}
In fact, let $\sigma(A)$ denote the spectrum of $A$. Then we may select
$a_-$ and $a_+$ in \eqref{natasha} as follows:
\begin{equation}
  \label{eq:spec}
  a_-=\min_{\lambda\in\sigma(A)} \lambda\leq a_+=\max_{\lambda\in\sigma(A)} \lambda.
\end{equation}

\begin{example}
For $A=\textrm{diag}(\beta_1,\beta_2,\ldots,\beta_d)$, $\beta_i>0$, we have
$\delta_t=\textrm{diag}(t^{\beta_1},t^{\beta_2},\ldots,t^{\beta_d})$, and one may verify
that
\begin{equation*}
|\xi|_A\asymp \sum_{j=1}^d|\xi_j|^{\tfrac{1}{\beta_j}}, \qquad \xi \in\bR^d.
\end{equation*}
In particular, for $A=\textrm{diag}(1,1,\ldots,1)$, we recover (up to equivalence) the usual isotropic Euclidean norm on $\bR^d$.
\end{example}

We define the balls ${B}_A(\xi,r):=\{\zeta\in\bR^d:|\xi-\zeta|_A<r\}$. It
can be verified that $|{B}_A(\xi,r)|=r^{\nu}\omega_d^A$, where
$\omega_d^A:=|{B}_A(0,1)|$, and consequently $(\bR^d,|\cdot|_A,d\xi)$ is a space of homogeneous
type with homogeneous dimension $\nu$.

We let 
\begin{equation}\label{eq:ball}\cB_A:=\{{B}_A(\xi,r):\xi\in\bR^d,r>0\} \end{equation}
denote the collection of all such balls. We notice that $\cB_A$ is invariant under affine transformations of the type $T_\eta:=\delta_\eta\cdot+c$ with $\eta>0$ and $c\in\bR^d$. In fact, 
one easily verifies that
\begin{equation}\label{eq:aff}
T_\eta {B}_A(\xi,r)={B}_A(\delta_\eta\xi+c,r\eta).    
\end{equation}
In particular, for $r>0$ and $\xi\in \bR^d$, we have
\[{B}_A(\xi,r)=T_r {B}_A(0,1)\]
with $T_r=\delta_r\cdot+\xi$.

\subsection{Muckenhoupt conditions adapted to anisotropic balls}\label{s:a}

The ``standard'' matrix Muckenhoupt condition on scalar and matrix weights, in a multivariate setting, is based on the geometry of Euclidean balls, see, e.g., \cite{Roud:2003a,FrazRoud:2004a}. A consequence of this is that the Muckenhoupt conditions on scalar and matrix weights are not automatically compatible with the anisotropic geometry defined using the quasi-norm from Definition \ref{def:A}, unless the quasi-norm happens to be equivalent to the Euclidean norm. Below we address this issue by considering modified Muckenhoupt conditions adapted to the quasi-norm given in Definition \ref{def:A}. Let us first consider scalar weights.

For a measurable function $f:\cD\rightarrow \bR$ and a subset $E\subset \bR^d$ of positive measure, 
we shall use the notation
$$\fint_E f(x)\,\d x:=\frac{1}{|E|}\int_E f(x)\,\d x\quad\text{and}\quad f(E):=\int_E f(x)\,\d x.$$
\begin{definition}
Suppose $w:\cD\rightarrow (0,\infty)$ is a measurable weight, and let $1\leq p<\infty$.
\begin{itemize}
\item In case $1<p<\infty$, we say that $w$ satisfies  the
 Muckenhoupt $A_p(\cB_A)$-condition, $1<p<\infty$, provided 
 \begin{equation}\label{eq:AP}
[w]_{A_p(\cB_A)}:=\sup_{E\in\cB_A}\fint_E w(x)\,\d x\cdot\bigg[\fint_E w^{-\frac{1}{p-1}}(x)\,\d x\bigg]^{p-1}<\infty,
\end{equation}
\item 
For $p=1$, we say that $w$ satisfies the
 Muckenhoupt $A_1(\cB_A)$-condition provided
\begin{equation}\label{eq:A1}
[w]_{A_1(\cB_A)}:=\sup_{E\in\cB_A}\fint_E w(x)\,\d x\cdot \esssup_{x\in E} w^{-1}(x)<\infty.
 \end{equation}
\end{itemize}
We denote the class of such scalar Muckenhoupt weights by $A_p(\cB_A)$.
\end{definition}
Next, we consider a matrix-valued function $W\colon \cD\rightarrow \bC^{N\times N}$, which is measurable
and strictly positive definite almost everywhere. We will refer to such a function as a \textit{matrix weight}. Inspired by the definition of Muckenhoupt $A_p$-conditions for matrix weights relative to standard Euclidean balls considered in \cite{Roud:2003a,FrazRoud:2004a}, which were based on the Muckenhoupt conditions first derived in \cite{TreiVolb:1997a,Volb:1997a,NazaTrei:1996a}, we follow \cite{NielSiki:2025a} and give the following definition of the Muckenhoupt $A_p$-condition for matrix weights.

\begin{definition}
Suppose $W\colon \cD\rightarrow \C^{N\times N}$ is a matrix weight and let $0<p<\infty$.
\begin{itemize}
    \item 
For $0<p\leq 1$,  $W$ is said to satisfy  the matrix Muckenhoupt  $A_p$-condition relative to $\cB_A$ provided 
\begin{equation}\label{eq:mA1}
	[W]_{{\mathbf{A}_p(\cB_A)}}:=\sup_{B\in\cB_A} \esssup_{t\in B} \fint_B \|W^{1/p}(x)W^{-1/p}(t)\|^p\dx<\infty.
\end{equation}
\item
In case $1<p<\infty$,  $W$ is said to satisfy  the matrix Muckenhoupt  $A_p$-condition relative to $\cB_A$ provided 
\begin{equation}\label{eq:Roudenko}
 [W]_{{\mathbf{A}_p(\cB_A)}}:=\sup_{B\in \cB_A} \bigg[\fint_B\left( \fint_B \big\|W^{1/p}(x)W^{-1/p}(t)\big\|^{p'} {\dt}\right)^{p/p'} {\dx}\bigg]^{1/p}<\infty,
\end{equation}
with $p'$ the dual exponent to $p$, i.e., $1/p+1/{p'}=1$. 
The norm $\|\cdot\|$ appearing in the integrals in Eqs.\ \eqref{eq:mA1} and \eqref{eq:Roudenko} is any matrix norm on the $N\times N$ matrices. We write $W\in \mathbf{A}_p(\cB_A)$ whenever $[W]_{{\mathbf{A}_p(\cB_A)}}<\infty$, $0<p<\infty$.
\end{itemize}
\end{definition}

\begin{remark}\leavevmode\label{re:dual}
 It is true that $W\in\mathbf{A}_p(\cB)$, $1<p<\infty$, implies $W^{-p'/p}\in \mathbf{A}_{p'}(\cB)$ with $p'$ the dual exponent to $p$, see, e.g., \cite{Roud:2003a} for the proof in the Euclidean case. The reader can verify that the proof in \cite{Roud:2003a} can easily be adapted to the case considered here. 
\end{remark}

For $0< p<\infty$, it is possible to create scalar Muckenhoupt weights from matrix weights satisfying a Muckenhoupt condition as stated in the following result, which we will need in the sequel. 

\begin{lemma}\label{le:sc}
Let $0<p<\infty$ and suppose $W\colon \cD\rightarrow \bC^{N\times N}$ is in $\mathbf{A}_p(\cB_A)$. Then there is a constant $C:=C(p)$ such that
\begin{itemize}
    \item[(i)] for any $\vx\in\bC^N\backslash \{\mathbf{0}\}$, the scalar weight $w_\vx(t):=|W^{1/p}(t)\vx|^p$ is in ${A}_p(\cB_A)$ and $[w_\vx]_{  {A}_p(\cB_A)}\leq C [W]_{  \mathbf{A}_p(\cB_A)}$ provided $1<p<\infty$,
    \item[(ii)] for any $\vx\in\bC^N\backslash \{\mathbf{0}\}$, the scalar weight $w_\vx(t):=|W^{1/p}(t)\vx|^p$ is in ${A}_1(\cB_A)$ and $[w_\vx]_{  {A}_1(\cB_A)}\leq  [W]_{  \mathbf{A}_p(\cB_A)}$ provided $0<p\leq 1$.
\end{itemize}
The estimates in (i) and (ii) also hold true for the scalar weight $v(t):=\|W^{1/p}(t)\|^p$ in place of $w_\vx$.
\end{lemma}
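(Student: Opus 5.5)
The plan is to reduce both statements to pointwise comparisons of the form $w_\vx(x)\le \|W^{1/p}(x)W^{-1/p}(t)\|^p\, w_\vx(t)$, where $w_\vx(t)=|W^{1/p}(t)\vx|^p$, and then integrate against the matrix Muckenhoupt condition. Throughout, $\|\cdot\|$ is taken to be the operator norm. Any other matrix norm is equivalent to it, and this is where the constant $C(p)$ in (i) comes from.

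\textbf{Case (ii), $0<p\le 1$.} Fix a ball $B\in\cB_A$ and a point $t\in B$. Writing $W^{1/p}(x)\vx=W^{1/p}(x)W^{-1/p}(t)\,W^{1/p}(t)\vx$ gives
\[
w_\vx(x)\le \|W^{1/p}(x)W^{-1/p}(t)\|^p\, w_\vx(t).
\]
Divide by $w_\vx(t)$, which is positive a.e.\ since $W$ is positive definite and $\vx\neq\mathbf{0}$, and then average over $x\in B$. This yields
\[
\fint_B w_\vx(x)\dx \cdot w_\vx(t)^{-1}\le \fint_B \|W^{1/p}(x)W^{-1/p}(t)\|^p\dx .
\]
Taking the essential supremum over $t\in B$ and then the supremum over $B$ gives exactly $[w_\vx]_{A_1(\cB_A)}\le [W]_{\mathbf{A}_p(\cB_A)}$, with constant one when the operator norm is used.

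\textbf{Case (i), $1<p<\infty$.} The $A_p$ expression for $w_\vx$ contains the term $\bigl(\fint_B w_\vx^{-1/(p-1)}\bigr)^{p-1}$, and this requires a lower bound on $|W^{1/p}(t)\vx|$. I would write $\vx=W^{-1/p}(t)\,W^{1/p}(t)\vx$ and obtain, for each $x,t$,
\[
|W^{1/p}(x)\vx|\le \|W^{1/p}(x)W^{-1/p}(t)\|\,|W^{1/p}(t)\vx|.
\]
Raising to the power $-p'$ (note that $1/(p-1)=p'/p$, so $w_\vx^{-1/(p-1)}(t)=|W^{1/p}(t)\vx|^{-p'}$) gives
\[
|W^{1/p}(t)\vx|^{-p'}\le \|W^{1/p}(x)W^{-1/p}(t)\|^{p'}\,|W^{1/p}(x)\vx|^{-p'}.
\]
Average in $t$ over $B$ and raise to the power $p/p'=p-1$. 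This gives
\[
|W^{1/p}(x)\vx|^p\Bigl(\fint_B w_\vx^{-p'/p}\Bigr)^{p-1}\le \Bigl(\fint_B\|W^{1/p}(x)W^{-1/p}(t)\|^{p'}\dt\Bigr)^{p/p'}.
\]
The left side is $w_\vx(x)$ times the second factor of the $A_p$ expression. Averaging in $x$ over $B$ then gives
\[
\fint_B w_\vx \cdot\Bigl(\fint_B w_\vx^{-1/(p-1)}\Bigr)^{p-1}\le [W]_{\mathbf{A}_p(\cB_A)}^p .
\]
Here I need to reconcile the exponent with the claimed linear bound $C[W]_{\mathbf{A}_p}$. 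My plan is to note that $[W]_{\mathbf{A}_p(\cB_A)}$ as defined in \eqref{eq:Roudenko} carries the outer power $1/p$, so the natural statement is $[w_\vx]_{A_p(\cB_A)}\le C[W]^p_{\mathbf{A}_p(\cB_A)}$. I would either read the lemma with the convention that the $p$-th power is absorbed, or check the paper's normalization. I expect this bookkeeping to be the only real friction point; the rest is routine.

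\textbf{The weight $v(t)=\|W^{1/p}(t)\|^p$.} I would use the same argument with the operator-norm inequalities
\[
\|W^{1/p}(x)\|\le \|W^{1/p}(x)W^{-1/p}(t)\|\,\|W^{1/p}(t)\|
\]
for the case-(ii)-type bound, and
\[
\|W^{1/p}(t)\|^{-1}\le \|W^{1/p}(x)W^{-1/p}(t)\|\,\|W^{1/p}(x)\|^{-1}
\]
for case (i). The second follows from $\|W^{1/p}(x)\|\le\|W^{1/p}(x)W^{-1/p}(t)\|\,\|W^{1/p}(t)\|$ by rearranging. With these two inequalities the computations above go through verbatim.
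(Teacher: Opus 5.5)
Your proof is correct. The paper does not prove Lemma \ref{le:sc} itself; it defers to \cite{mn_geo}, so there is no in-text argument to compare against.

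Your argument is self-contained. Everything rests on one pointwise inequality, $u(x)^{1/p}\le \|W^{1/p}(x)W^{-1/p}(t)\|\,u(t)^{1/p}$ for $u\in\{w_\vx,v\}$, which you average directly for (ii) and invert before averaging for (i). It needs neither reducing operators nor the duality of Remark \ref{re:dual}, and it handles $w_\vx$ and $v$ at the same time. It gives constant $1$ when $\|\cdot\|$ is the operator norm, or any matrix norm dominating it; that is what the constant-free bound in (ii) implicitly requires. Since $w_\vx>0$ a.e., your averaged inequality also shows that $w_\vx$ and $w_\vx^{-1/(p-1)}$ are integrable over every ball, which membership in $A_p(\cB_A)$ tacitly requires.

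Your concern about the exponent in (i) is justified, and it is more than bookkeeping. Because \eqref{eq:Roudenko} carries the outer power $1/p$, what you prove, $[w_\vx]_{A_p(\cB_A)}\le [W]^p_{\mathbf{A}_p(\cB_A)}$, is the correct form. The linear bound as printed cannot hold with $C=C(p)$:
\begin{itemize}
\item Take $N=1$, $W=w$ and $\|\cdot\|=|\cdot|$. Then $[W]^p_{\mathbf{A}_p(\cB_A)}=[w]_{A_p(\cB_A)}=[w_\vx]_{A_p(\cB_A)}$.
\item A linear bound would therefore force $[w]_{A_p(\cB_A)}^{1-1/p}\le C(p)$ for every scalar $A_p(\cB_A)$ weight.
\item This fails, because $[w]_{A_p(\cB_A)}$ is unbounded over such weights, e.g.\ for $|x|_A^{\alpha}$ as $\alpha\uparrow\nu(p-1)$.
\end{itemize}
So (i) should read $[w_\vx]_{A_p(\cB_A)}\le C[W]^p_{\mathbf{A}_p(\cB_A)}$. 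The slip is harmless for the rest of the paper, which only uses membership with constants depending on $[W]_{\mathbf{A}_p(\cB_A)}$ (Lemma \ref{le:doub}, \eqref{eq:gr}). In (ii), \eqref{eq:mA1} has no outer root, and your bound matches the statement exactly.
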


We refer to \cite{mn_geo} for a proof. A consequence of Lemma \ref{le:sc} is that the weight $w_\vx$ satisfies the following doubling condition.
 
\begin{lemma}\label{le:doub}
 Let $0<p<\infty$ and suppose $W\in\mathbf{A}_p(\cB_A)$.  For $\vx\in\bC^N\backslash \{\mathbf{0}\}$, the weight $w_\vx(\cdot):=|W^{1/p}(\cdot)\vx|^p$ satisfies the doubling condition
   \begin{equation}\label{eq:Wdoub}
   \int_{B_{A}(\vy,2r)} w_\vx(t)\,\dt\leq C\int_{B_{A}(\vy,r)} w_\vx(t)\,\dt,
    \end{equation}
         with a constant $C:=C([W]_{\mathbf{A}_p(\cB_A)})$ independent of $\vx\in\bC^N\backslash \{\mathbf{0}\}$, $\vy\in\bR^d$, and $r>0$.
\end{lemma}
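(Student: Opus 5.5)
The doubling estimate will follow from Lemma \ref{le:sc} combined with the standard fact that scalar Muckenhoupt weights relative to a doubling family of balls are doubling. By Lemma \ref{le:sc}, $w_\vx\in A_q(\cB_A)$ with $q=p$ if $1<p<\infty$ and $q=1$ if $0<p\le 1$. The constant satisfies $[w_\vx]_{A_q(\cB_A)}\le C[W]_{\mathbf{A}_p(\cB_A)}$, which is uniform in $\vx$. So it suffices to show that a scalar weight $w\in A_q(\cB_A)$, $1\le q<\infty$, obeys $w(B_A(\vy,2r))\le C w(B_A(\vy,r))$ with $C$ depending only on $[w]_{A_q(\cB_A)}$, $q$ and $\nu$.

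The key intermediate bound is the usual one: for a ball $B\in\cB_A$ and a measurable $E\subset B$ of positive measure,
\[
\Big(\frac{|E|}{|B|}\Big)^q\le [w]_{A_q(\cB_A)}\frac{w(E)}{w(B)}.
\]
For $q>1$ I will write $|E|=\int_E w^{1/q}w^{-1/q}$ and apply H\"older with exponents $q,q'$. This gives $|E|^q\le w(E)\big(\int_B w^{-1/(q-1)}\big)^{q-1}$. Dividing by $|B|^q$ and using \eqref{eq:AP} for the ball $B$ yields the claim. For $q=1$ I will use $|E|\,\operatorname{ess\,inf}_B w\le w(E)$, then \eqref{eq:A1} in the form $\fint_B w\le [w]_{A_1}\operatorname{ess\,inf}_B w$.

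I then apply this with $B=B_A(\vy,2r)\in\cB_A$ and $E=B_A(\vy,r)$. Since $|B_A(\xi,s)|=s^\nu\omega_d^A$, the ratio $|E|/|B|=2^{-\nu}$. Hence
\[
w(B_A(\vy,2r))\le 2^{\nu q}[w]_{A_q(\cB_A)}\,w(B_A(\vy,r)),
\]
which is \eqref{eq:Wdoub} with $C=2^{\nu q}C(p)[W]_{\mathbf{A}_p(\cB_A)}$, independent of $\vx$, $\vy$ and $r$.

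There is no real obstacle here. The only point needing care is that $E$ and $B$ are concentric anisotropic balls whose measure ratio is exactly $2^{-\nu}$, with no dependence on the quasi-triangle constant $C_A$. I also need the finiteness and positivity of $w_\vx(B)$, which follows from $w_\vx\in A_q$: it implies local integrability, and $w_\vx>0$ a.e. because $W$ is positive definite a.e.
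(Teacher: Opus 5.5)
Your proposal is correct and follows the route the paper indicates: the paper gives no separate argument, records the lemma as a consequence of Lemma \ref{le:sc}, and defers details to \cite{mn_geo}. You supply the standard scalar step $(|E|/|B|)^q\le [w]_{A_q(\cB_A)}\,w(E)/w(B)$ for concentric anisotropic balls with $|E|/|B|=2^{-\nu}$, which gives the constant $2^{\nu q}C(p)[W]_{\mathbf{A}_p(\cB_A)}$, uniform in $\vx$, $\vy$ and $r$.
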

Lemma \ref{le:sc} was first proved by Calder\'on \cite{Cald:1976a} in a setting that covers the geometry of anisotropic balls $\cB_A$ considered in the present paper. We refer to \cite{mn_geo} for further details.


\section{Matrix Weighted Smoothness Spaces of Decomposition type}\label{sec:3}
In this section, we define the main class of matrix-weighted smoothness spaces compatible with the anisotropy from Definition \ref{def:A}. The spaces are constructed using the so-called decomposition method on the Fourier side, where the spaces are defined by imposing suitable weighted $L^p$-restrictions on local components of a vector-valued function. The local components will be defined using general flexible decompositions of the frequency space.

Roudenko was the first to apply such an approach in the matrix-weighted setting, see \cite{Roud:2003a}, where she introduced a very natural notion of matrix-weighted Besov spaces $B^s_{p,q}(W)$ based on dyadic decompositions of vector-valued functions. This work was later extended by Frazier and Roudenko  \cite{FrazRoud:2004a, FrazRoud:2021a} to matrix-weighted Triebel-Lizorkin spaces and Besov spaces for the full range $0<p<\infty$.

In order to decompose the frequency space in a flexible manner, while retaining a suitable structure, we first introduce so-called admissible coverings and show how to generate them. These coverings are then used to construct a suitable resolution of unity and then define Triebel-Lizorkin-type smoothness spaces and associated modulation spaces. Finally, we construct a frame which will be used in the following sections to generate compactly supported frame expansions.
\begin{definition}\label{def:adm}
A family $\mathcal{C}:=\{C_k\}_{k\in \Z^d}$ of bounded measurable subsets
$C_k\subset \bR^d$ is called an admissible covering if
$\bR^d=\cup_{k\in \Z^d}C_k$ and there exists $n_0<\infty$ such that
$\#\{j\in \Z^d:C_k\cap C_j\not= \emptyset\}\le n_0$ for all $k\in
\Z^d$.
\end{definition}

\begin{remark}
    Clearly, one may use any countable infinite set as an index set in place of $\bZ^d$ in Definition \ref{def:adm}, but we prefer to think of the index $k$ as being related to a suitably sampled frequency from $\bR^d$. 
\end{remark}

We will mainly be interested in the case where the covering is formed by a suitable collection
of $|\cdot|_A$-balls, where the radius of a given ball is a so-called moderate
function of its center.
\begin{definition}\label{entertainus}%
A function $h:\bR^d\to (0,\infty)$  is
called ($|\cdot|_A$-)moderate if there exist constants $\rho_0, R_0>0$ such that
$|\xi-\zeta|_A\le \rho_0 h(\xi)$ implies $R^{-1}_0\le
h(\zeta)/h(\xi)\le R_0$, and $h$ satisfies the growth condition
\begin{equation}\label{eq:growthh}
    C_1\brac{\xi}_A^\tau\leq h(\xi)\leq C_2\brac{\xi}_A,\qquad \xi\in\bR^d,
\end{equation}
for some $\tau, C_1, C_2>0$. 
\end{definition}

\begin{remark}
Since we consider the quasi-distance $|\cdot|_A$ fixed, we will slightly abuse notation and refer to a 
$|\cdot|_A$--moderate function simply as a moderate function.
\end{remark}

\begin{example}\label{cantexplain}\noindent
Let $0< \eta \le 1$. Then
\begin{equation}
h_\eta(\xi):=(1+|\xi|_A)^\eta
\end{equation}
is moderate.
\end{example}
With a moderate function $h$, it is then possible to construct an
admissible covering by using balls (see \cite[Lemma 4.7]{Feic:1987a} and \cite[Lemma 5]{BoruNiel:2008a}):
\begin{lemma}\label{braunsugar}\label{le:Q}\noindent
Given a moderate function $h$ with constants $\rho_0,R_0>0$, there
exists a countable admissible covering
$\mathcal{C}:=\{B_A(\xi_k,\rho h(\xi_k))\}_{k\in\Z^d}$ for
$\rho < \rho_0$, such that
\begin{itemize}
   \item $\{B_A(\xi_k,\frac{\rho}{2} h(\xi_k))\}_{k\in\Z^d}$ is also an admissible covering, and
    \item there exists a constant
   $0<\rho'<\rho/2$ such that the sets
   $\{B_A(\xi_k,\rho' h(\xi_k))\}_{k\in\Z^d}$
     are pairwise
    disjoint.
\end{itemize}
\end{lemma}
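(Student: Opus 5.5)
The plan is a maximal-disjoint-family argument, as in \cite[Lemma 4.7]{Feic:1987a}, carried out with the quasi-norm $|\cdot|_A$ and the moderate function $h$. Fix $0<\rho<\rho_0$ and choose a small parameter $\rho'>0$, to be tuned, with $\rho'<\rho/2$. By Zorn's lemma, or a greedy selection over a countable dense set, I will pick a maximal family of points $\{\xi_k\}$ such that the balls $B_A(\xi_k,\rho' h(\xi_k))$ are pairwise disjoint. Since each such ball has measure $(\rho' h(\xi_k))^\nu\omega_d^A\ge c>0$ by the lower bound in \eqref{eq:growthh}, and the balls are disjoint, the family is countable and locally finite, so we may index it by $\bZ^d$. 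Disjointness is then built in, which gives the third bullet.

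For the covering property I will use maximality. Given $\zeta\in\bR^d$, the ball $B_A(\zeta,\rho' h(\zeta))$ must meet some $B_A(\xi_k,\rho' h(\xi_k))$. If we can first show $h(\zeta)\le R_0 h(\xi_k)$, the quasi-triangle inequality \eqref{eq:Qtriangle} gives
\[
|\zeta-\xi_k|_A\le C_A\rho'(h(\zeta)+h(\xi_k))\le C_A\rho'(R_0+1)h(\xi_k).
\]
Choosing $\rho'$ with $C_A\rho'(R_0+1)<\rho/2$ then puts $\zeta$ in $B_A(\xi_k,\frac{\rho}{2}h(\xi_k))$. This proves that both families in the statement cover $\bR^d$.

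The comparison $h(\zeta)\asymp h(\xi_k)$ needs care, since moderateness only compares $h$ at points within distance $\rho_0 h$ of each other. Let $\eta$ be a point in the intersection of the two small balls. Then $|\zeta-\eta|_A<\rho' h(\zeta)\le\rho_0 h(\zeta)$, so $h(\eta)\asymp h(\zeta)$. Likewise $|\xi_k-\eta|_A<\rho'h(\xi_k)$ gives $h(\eta)\asymp h(\xi_k)$. Chaining the two gives $h(\zeta)\le R_0^2h(\xi_k)$, so in the inequality above I will use $R_0^2$ in place of $R_0$ when choosing $\rho'$.

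For admissibility (the finite overlap bound $n_0$), suppose $B_A(\xi_j,\rho h(\xi_j))$ meets $B_A(\xi_k,\rho h(\xi_k))$. Since $\rho<\rho_0$, the same chaining through a common point gives $h(\xi_j)\asymp h(\xi_k)$ with constant $R_0^2$. Then $|\xi_j-\xi_k|_A\le C\,h(\xi_k)$ by \eqref{eq:Qtriangle}, so the disjoint balls $B_A(\xi_j,\rho' h(\xi_j))$ all lie inside $B_A(\xi_k,C'h(\xi_k))$ and each has measure at least $c\,h(\xi_k)^\nu$. Comparing volumes via $|B_A(\xi,r)|=r^\nu\omega_d^A$ bounds their number by a constant independent of $k$. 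The same count applies to the $\frac{\rho}{2}$-balls. Finally, each $B_A$-ball is bounded by \eqref{natasha}, so the covering sets are bounded as Definition \ref{def:adm} requires.

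The main obstacle is getting the constants consistent: $\rho'$ must be small relative to $\rho$, $C_A$ and $R_0$, while every comparison must stay within the $\rho_0$ range where moderateness applies. Routing each comparison through an intersection point, as above, keeps each step within that range.
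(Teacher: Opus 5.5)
Your proof is correct and is essentially the argument the paper relies on. The paper gives no proof of its own but cites \cite[Lemma 4.7]{Feic:1987a} and \cite[Lemma 5]{BoruNiel:2008a}, which use this maximal-disjoint-family construction, the comparison $h(\zeta)\le R_0^2h(\xi_k)$ obtained by chaining through an intersection point, and a volume count for the overlap bound. Your passing claim of local finiteness needs moderateness, not just the lower volume bound, but nothing in the proof uses it.
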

\begin{remark}
 Using the estimate \eqref{natasha}, we may choose  $\rho$ small enough such that $0<\rho\leq 1$ and  $B_A(0,\rho)\subset [-\pi,\pi]^d$. This scaling will be convenient for the definition of a tight frame in Eq.\ \eqref{eq:ltf}.
\end{remark}
 Notice that the covering $\mathcal{C}$ from Lemma
\ref{braunsugar} is generated by a family of invertible affine
transformations applied to $B_A(0,\rho)$ in the sense
that
\begin{equation*}
B_A(\xi_k,\rho h(\xi_k))=T_kB_A(0,\rho), \,\,
T_k:=\delta_{r_k}\cdot+\xi_k,
\end{equation*}
where we put 
\begin{equation}\label{eq:rk}
    r_k:= h(\xi_k).
\end{equation}

An
important property of the covering we will need is that whenever
$B_A \big(\xi_j,\rho h(\xi_j)\big)\cap B_A
\big(\xi_k,\rho h(\xi_k)\big)\not=\emptyset$ then $h(\xi_j)\asymp
h(\xi_k)$ uniformly in $j$ and $k$, which follows from the fact that
$h$ is moderate and $\rho<\rho_0$. We deduce that there
exists a uniform constant $K$ such that
\begin{equation}
  \label{eq:esc}
  \|\delta_{h(\xi_k)}^{-1}\delta_{ h(\xi_j)}\|_{\ell_2(\bR^{d\times d})}\leq K\qquad 
\text{whenever}\quad B_A \big(\xi_j,\rho h(\xi_j)\big)\cap B_A
\big(\xi_k,\rho h(\xi_k)\big)\not=\emptyset.
\end{equation}

We are now in a position to generate a suitable resolution of unity that, for technical reasons, must satisfy the following conditions.
\begin{definition}\label{def:bapu}
Let $\mathcal{C}:= \{T_kB_A(0,\rho)\}_{k\in\Z^d}$  be an
admissible covering of $\bR^d$ from Lemma \ref{braunsugar}. A
corresponding bounded admissible partition of unity (BAPU) is a
family of functions $\Phi=\{\phi_k\}_{k\in\Z^d}\subset
\mathcal{S}$ satisfying:
\begin{itemize}
\item[(i)] $ \textrm{supp}(\phi_k)\subseteq T_kB_A(0,\rho),\,
k\in\Z^d$.
\item[(ii)] $\sum_{k\in\Z^d} \phi_k(\xi)=1,\, \xi \in \bR^d$.
\item[(iii)] For $M>0$ there is a constant $K:=K(M)$ such that, for $k\in\bZ^d$,
$$|\mathcal{F}^{-1}(\phi_k)(x)|\leq K r_k^{\nu} (1+r_k|x|_A)^{-M},\qquad x\in\bR^d.$$
\end{itemize}

\end{definition}
A standard procedure for generating a BAPU for $\mathcal{C}$ is to pick
$g \in C^\infty(\bR^d)$ non-negative with
$\textrm{supp}(g)\subseteq B_A(0,\rho)$ such that there is $\delta>0$ satisfying
$g(\xi)\geq \delta$ for $\xi \in B_A(0,\rho/2)$. 
We define
\begin{equation}
g_k(\xi):=g(T_k^{-1}).
\end{equation}
One can show
that
\begin{equation*}
\phi_k(\xi):=\frac{g_k(\xi)}{\sum_{j\in\Z^d}g_j(\xi)}
\end{equation*}
defines a BAPU for $\mathcal{C}$; see \cite[Section 4]{BoruNiel:2008a}. For later use, we also consider
\begin{equation}\label{yesterday}
\psi_k(\xi):=\frac{g_k(\xi)}{G(\xi)},
\end{equation}
with 
\begin{equation}\label{eq:G}
    G(\xi):=\sqrt{\sum_{j\in\Z^d}g_j^2(\xi)}.
    \end{equation}
 Notice that  $\Psi:=\{\psi_k\}_{k\in\bZ^d}$ defines a ``square-root'' of a BAPU in the sense that
 $$\sum_{k\in\Z^d}\psi_k^2(\xi)=1,\qquad \xi\in\bR^d.$$
 

\begin{remark}\label{rem:b}
For a BAPU $\{\phi_k\}_{k\in \bN}$ associated 
with the admissible covering $\{T_kB_A(0,\rho)\}_{k\in
  \bN}$ we define 
  \begin{equation}\label{eq:kk2}
{\phi}_k^{*}
:=\sum_{j\in \tilde{k}} \phi_j,
\end{equation}
where $\tilde{k}:=\{j\in\bZ^d: \text{supp}(\phi_j)\cap T_kB_A(0,\rho)\not=\emptyset\}$.
We will use extensively that $\phi_k\phi_k^*=\phi_k$ and that there exists $M$ independent of $k$ such that  $\#\tilde{k}\leq M$, see \cite{FeicGrob:1985a,Feic:1987a}. 
\end{remark}

With a BAPU in hand, we can now define the associated matrix-weighted modulation spaces. 
We denote by $\cS=\cS(\bR^d)$ the Schwartz space of rapidly decreasing, infinitely differentiable functions on $\bR^d$. A function $\varphi\in\cC^{\infty}$ belongs to $\cS(\bR^d)$ when, for every $k\in\bN_0$ with $\bN_0:=\bN\cup\{0\}$, the seminorms
\begin{equation}\label{Snorm1}
p_{k}(\varphi):=\max_{\alpha\in \bN_0^d: |\alpha|\leq k}\sup_{x\in\bR^d}(1+|x|)^{k}|\partial ^\alpha \varphi(x)|
\end{equation}
are all finite, where we put $|\alpha|:=\sum_{j=1}^d \alpha_j$ for $\alpha\in \bN_0^d$. As is well known, the seminorms $\{p_k\}$ turn $\cS$ into a Fr\'{e}chet space.
The dual space $\cS'=\cS'(\bR^d)$ of $\cS$ is the space of tempered distributions. It will also be useful to consider the corresponding 
concepts in a vector-valued setting, where we consider the direct-sum Fr\'{e}chet space $\bigoplus_{j=1}^N\dS$, with 
dual space $\bigoplus_{j=1}^N\mathcal{S}'(\bR^d)$ consisting of $N$-tuples of tempered distributions.

 Let $m:\bR^d\rightarrow\bC$ be a bounded measurable function (a multiplier). We denote by $m(D)f:=\mathcal{F}^{-1}(m\hat{f})$ the corresponding Fourier multiplier operator, i.e., the convolution of $\mathcal{F}^{-1}(m)$ with $f$.

We are now ready to give the definition of the vector-valued weighted decomposition spaces.
\begin{definition}\label{def:mod}
Let $W:\Rd\rightarrow \bC^{N\times N}$ be a matrix weight, and let $\cC=\{C_k\}_{k\in\bZ^d}$ be an admissible covering of $\bR^d$ with associated BAPU 
   $\Phi:=\{\phi_k\}_{k\in\bZ^d}$ of the type considered in Definition \ref{def:bapu}. 
For  $s\in\bR$, $0<p<\infty$, and $0<q\leq \infty$, we let $M^{s}_{p,q}(W)$ denote the collection of all vector-valued distributions 
$\vf=(f_1,\ldots,f_N)^T\in\bigoplus_{j=1}^N\mathcal{S}'(\bR^d) $, such that
$$\|\vf\|_{M^{s}_{p,q}(W)}:=\bigg\|\bigg\{|C_k|^{s/\nu}\|\phi_k(D)\vf\|_{L^p(W)}\bigg\}_{k\in\bZ^d}\bigg\|_{\ell_q}<\infty,$$
with $\phi_k(D)\vf:=(\phi_k(D)f_1,\ldots,\phi_k(D)f_N)^T$ acting coordinate-wise. For $q=\infty$, the $\ell^q$-norm is replaced by the supremum over $C$.
\end{definition}
\begin{remark} \,\vskip 3pt
    \begin{itemize}
        
        \item[(a)] Notice that $M^{s}_{p,q}(W)$ a priori depends on both the choice of covering $\cC$ and BAPU $\Phi$. Proposition \ref{prop:complete} below will show that up to equivalence of (quasi-)norms, $M^{s}_{p,q}(W)$ depends only on the moderate function $h$.
    \item[(b)] We recall that by construction $C_k=\delta_{r_k}B_A(0,\rho)+\xi_k$, so consequently $|C_k|\asymp r_k^{\nu}$ and we may replace the weight $|C_k|^{s/\nu}$ in Definition \ref{def:mod} by $r_k^s$ whenever convenient. 
    \end{itemize}
\end{remark}
It turns out that $M^{s}_{p,q}(W)$ is in fact a (quasi-)Banach space, at least for matrix weights $W\in \mathbf{A}_p(\cB_A)$. We have the following result, where it is also shown that up to equivalence of norms, $M^{s}_{p,q}(W)$ is independent of the choice of BAPU whenever $W\in\mathbf{A}_p(\cB_A)$.
\begin{proposition}\label{prop:complete}
Let $0< p<\infty$ and suppose $W\in \mathbf{A}_p(\cB_A)$. For $0<q\leq \infty$ and $s\in\bR$, the following statements hold:
\begin{itemize}
\item[(a)] We have continuous embeddings
$$\bigoplus_{j=1}^N\dS\hookrightarrow M^{s}_{p,q}(W)\hookrightarrow \bigoplus_{j=1}^N\mathcal{S}'(\bR^d).$$
    \item[(b)] The space $M^{s}_{p,q}(W)$ is complete, i.e., $M^{s}_{p,q}(W)$ is a (quasi-)Banach space.
    \item[(c)] The space $M^{s}_{p,q}(W)$ is independent of the choice of BAPU up to equivalence of (quasi-)norms.
\end{itemize}

\end{proposition}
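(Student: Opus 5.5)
The whole proof rests on one local convolution estimate, which I will establish first. Let $\vg\in\bigoplus_{j=1}^N\cS'$ have $\supp\hat\vg\subseteq T_kB_A(0,\rho')$ for some fixed $\rho'$, and let $\Theta$ be a function with $|\cF^{-1}\Theta(x)|\le K r_k^\nu(1+r_k|x|_A)^{-M}$. Then
\begin{equation*}
\|\Theta(D)\vg\|_{L^p(W)}\le C\|\vg\|_{L^p(W)},
\end{equation*}
with $C$ independent of $k$.

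\emph{Case $1<p<\infty$.} Write $|W^{1/p}(x)\,\cF^{-1}\Theta*\vg(x)|\le\int|\cF^{-1}\Theta(x-y)|\,\|W^{1/p}(x)W^{-1/p}(y)\|\,|W^{1/p}(y)\vg(y)|\,dy$. The kernel is dominated by a sum over dyadic annuli $\{|x-y|_A\sim 2^jr_k^{-1}\}$. On each annulus, the matrix $\mathbf{A}_p(\cB_A)$ condition controls the averages of $\|W^{1/p}(x)W^{-1/p}(y)\|$, with a loss of at most $2^{j\beta}$ coming from the doubling property in Lemma \ref{le:doub}. This gives an $\mathbf{A}_p$-adapted maximal operator bound in the style of Goldberg and Roudenko, carried over to $\cB_A$-balls exactly as for Euclidean balls, since $(\bR^d,|\cdot|_A,dx)$ is of homogeneous type.

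\emph{Case $0<p\le1$.} Here I use the band-limitedness of $\vg$: a Plancherel--Pólya/Nikolskii-type inequality $\sup_{y\in B_A(x,r_k^{-1})}|W^{1/p}(x)\vg(y)|$ combined with the $p$-triangle inequality. Discretizing over a lattice $\delta_{r_k^{-1}}\bZ^d$, the $\mathbf{A}_p$ condition \eqref{eq:mA1} (equivalently the scalar $A_1$ bound for $w_\vx$ in Lemma \ref{le:sc}(ii)) controls the averages of $\|W^{1/p}(x)W^{-1/p}(t)\|^p$ over each ball. The anisotropic dilations reduce everything to the case $r_k=1$ via $\delta_{r_k}$ together with the affine invariance \eqref{eq:aff} of $\cB_A$ and of $[W]_{\mathbf{A}_p}$.

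\emph{Part (c).} Let $\Phi,\Psi$ be two BAPUs and write $\phi_k(D)\vf=\sum_{j:\,\supp\psi_j\cap\supp\phi_k\ne\emptyset}\phi_k(D)\psi_j(D)\vf$. The sum has boundedly many terms, $r_j\asymp r_k$ by \eqref{eq:esc}, and $\phi_k$ satisfies (iii), so the local estimate gives $\|\phi_k(D)\psi_j(D)\vf\|_{L^p(W)}\le C\|\psi_j(D)\vf\|_{L^p(W)}$. Summing in $\ell_q$ with finite overlap, and using $|C_k|\asymp|C_j|$, yields the norm equivalence.

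\emph{Part (a).} For the first embedding, take $\vf\in\bigoplus\cS$. The bound $|\phi_k(D)\vf(x)|\le C_L\brac{\xi_k}_A^{-L}p_m(\vf)(1+|x|)^{-L}$ holds because $\hat\vf$ decays on $C_k$. Next, $\int\|W^{1/p}(x)\|^p(1+|x|)^{-Lp}dx<\infty$, since $v=\|W^{1/p}\|^p$ is doubling by Lemma \ref{le:sc} and hence has polynomial growth. Summing $r_k^{s}\brac{\xi_k}_A^{-L}$ over $k$ converges by the growth condition \eqref{eq:growthh} on $h$, which gives at most polynomially many $k$ per dyadic shell in $|\xi|_A$. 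For the second embedding, for $\varphi\in\cS$ write $\langle\vf,\varphi\rangle=\sum_k\langle\phi_k(D)\vf,\phi_k^*(D)\varphi\rangle$ using $\phi_k\phi_k^*=\phi_k$. Then estimate $|\langle\phi_k(D)\vf,\phi_k^*(D)\varphi\rangle|\le\|\phi_k(D)\vf\|_{L^p(W)}\cdot\|W^{-1/p}\phi_k^*(D)\varphi\|_{L^{p'}}$ when $p>1$, using Remark \ref{re:dual} to bound the second factor by a Schwartz seminorm times $\brac{\xi_k}_A^{-L}$. When $p\le1$, I instead use a pointwise sup bound of $\phi_k(D)\vf$ through the Nikolskii inequality; the factor $r_k^{\nu/p}$ it costs is absorbed by the decay of $\phi_k^*(D)\varphi$ together with a lower bound $\int_B\|W^{-1/p}\|$-type control from the $\mathbf{A}_p$ condition.

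\emph{Part (b).} Let $(\vf_n)$ be Cauchy in $M^s_{p,q}(W)$. By (a) it converges in $\cS'$ to some $\vf$. Each $\phi_k(D)\vf_n$ converges in $L^p(W)$, which is complete, and the limit agrees with $\phi_k(D)\vf$ since convergence in $L^p(W)$ implies distributional convergence by the $L^p(W)\hookrightarrow\cS'$ part of the argument. Fatou's lemma in $\ell_q$ then gives $\vf_n\to\vf$.

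The main obstacle is the uniform local convolution estimate for $p\le1$, where the $\mathbf{A}_p$ condition must be combined with band-limitation at the anisotropic scale $r_k$; everything else is bookkeeping.
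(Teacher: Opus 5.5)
Your proposal is correct. For (c), the embedding $\bigoplus_{j=1}^N\mathcal{S}(\bR^d)\hookrightarrow M^s_{p,q}(W)$, the case $1<p<\infty$ of the embedding into $\bigoplus_{j=1}^N\mathcal{S}'(\bR^d)$, and the Fatou step in (b), it is the paper's argument. The differences are these.

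First, the paper does not prove the local multiplier estimate; it imports it as Proposition \ref{prop:main} from \cite{mn_geo}. For $1<p<\infty$ your version is easier than you suggest. Apply H\"older's inequality on a bounded-overlap cover by balls of radius $2^jr_k^{-1}$ and use \eqref{eq:Roudenko}: this bounds the $j$-th annulus by $C2^{-j(M-\nu)}\|\vg\|_{L^p(W)}$. No maximal operator, doubling loss or band-limitation is needed. For $0<p\le1$ your text is only an outline of what \cite{mn_geo} proves.

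Second, for the $\mathcal{S}'$-embedding with $0<p\le1$ the paper works discretely. It expands $\boldsymbol{\theta}$ in the tight frame and pairs coefficients through $|\langle\vc,\vd\rangle|\le|A_Q\vc|\,|A_Q^{-1}\vd|$. It then controls $\{\vc_{k,\ell}\}$ by Proposition \ref{prop:coef}, hence by the sampling result Proposition \ref{prop:samp}, and controls $\|A_Q^{-1}\|$ by Lemma \ref{le:dou}. This reuses machinery it needs anyway.

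Your route is frame-free and keeps the identity $\sum_k\langle\phi_k(D)\vf,\phi_k^*(D)\varphi\rangle$ for all $p$. The price is that you must prove a weighted Nikolskii bound $|\vg(y)|\lesssim r_k^{\nu/p}\|A_Q^{-1}\|\,\|\vg\|_{L^p(W)}$ for $y\in Q$, with $Q$ a ball of radius $r_k^{-1}$. This is a pointwise form of Proposition \ref{prop:samp}: apply the scalar Plancherel--P\'olya inequality to the band-limited function $A_Q\vg$, then use \eqref{eq:mA1} and Lemma \ref{le:dou}.

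Your phrase ``a lower bound $\int_B\|W^{-1/p}\|$-type control'' should instead be an \emph{upper} bound on $\esssup_B\|W^{-1/p}\|$. Here is how to get it.
\begin{itemize}
\item[(i)] \eqref{eq:mA1} gives $\|A_BW^{-1/p}(t)\|\lesssim 1$ for a.e.\ $t\in B$.
\item[(ii)] Comparing $A_B$ with $A_{B_A(0,1)}$ gives $\|A_B^{-1}\|\lesssim |B|^{1/p}$ for $B\supseteq B_A(0,1)$.
\item[(iii)] Together these give $\|W^{-1/p}(t)\|\lesssim \brac{t}_A^{\nu/p}$ a.e.
\item[(iv)] Since $\|A_Q^{-1}\|\lesssim\esssup_Q\|W^{-1/p}\|$, this polynomial growth is what the spatial decay of $\phi_k^*(D)\varphi$ absorbs.
\end{itemize}

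Finally, in (b) you need not identify the $L^p(W)$-limits of $\phi_k(D)\vf_n$; for $p<1$ that step again needs band-limitation. Because $\phi_k\in\mathcal{S}$, $\phi_k(D)\vf_m\to\phi_k(D)\vf$ pointwise. Fatou in $L^p(W)$ and then in $\ell_q$ suffices, as in the paper.
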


We will postpone the proof of (a) and (b) until Appendix \ref{sec:complete} as we first need to develop a number of technical tools providing estimates to handle certain band-limited vector-valued functions.
To prove (c), we will need the following convolution lemma proved 
by the author in \cite{mn_geo}, extending an earlier
result by Frazier and Roudenko \cite[Lemma 4.4]{FrazRoud:2021a} to the present anisotropic setting.

For $B=B_A(c,R)\in \cB_A$, we define the following class of vector-valued functions with band-limited coordinate functions:
\begin{equation}
E_B:=\{\vf:\bR^d\rightarrow \bC^N: f_i\in \ddS \text{ and } \text{supp}(\hat{f}_i)\subseteq B, i=1,\ldots,N\}.
\end{equation}

\begin{proposition}\label{prop:main}
Let $0<p<\infty$ and suppose $W\in\mathbf{A}_p(\cB_A)$. Let $B=B_A(c,R)\in\cB_A$ and put $\beta:=\max\{\nu,\nu p\}$.  Suppose there is a constant $K$ such that the compactly supported function $\phi:B_A(c,R)\rightarrow\bC$ satisfies 
\begin{equation}\label{eq:e1}
|\mathcal{F}^{-1}(\phi)(x)|\leq K R^{\nu} (1+R|x|_A)^{-M},\qquad x\in\bR^d, 
\end{equation}
for some $$M>\max\big\{\nu+\max\{0,p-1\}\beta,(\nu+\beta)/\min\{1,p\}\big\}.$$  Then there exists a finite constant $C:=C([W]_{\mathbf{A}_p(\cB_A)},K,p)$ such that the Fourier multiplier
$$\phi(D)f:=\mathcal{F}^{-1}[\phi\cdot \mathcal{F}(f)],$$ defined for $f\in \ddS $ with $\text{supp}(\hat{f})\subseteq B_A(c,R)$, satisfies
$$\|\phi(D)\vf\|_{L^p(W)}\leq C \|\vf\|_{L^p(W)}$$
for all $\vf\in E_B\cap L^p(W)$.
\end{proposition}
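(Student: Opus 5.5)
Since $\vf\in E_B$, we have $\phi(D)\vf=\mathcal{F}^{-1}(\phi)*\vf$, so pointwise
\[
W^{1/p}(x)\phi(D)\vf(x)=\int_{\bR^d}\mathcal{F}^{-1}(\phi)(x-y)\,W^{1/p}(x)W^{-1/p}(y)\,W^{1/p}(y)\vf(y)\,dy .
\]
The plan is to split $\bR^d$ into anisotropic annuli around $x$ at scale $R^{-1}$. These are $A_0=B_A(x,R^{-1})$ and $A_j=B_A(x,2^jR^{-1})\setminus B_A(x,2^{j-1}R^{-1})$ for $j\ge 1$. By \eqref{eq:e1}, the kernel is bounded by $KR^{\nu}2^{-jM}$ on $A_j$. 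The matrix factor $\|W^{1/p}(x)W^{-1/p}(y)\|$ is then controlled through averages over the balls $B_j:=B_A(x,2^jR^{-1})$, using the $\mathbf{A}_p(\cB_A)$ condition together with the doubling property of Lemma \ref{le:doub}. The growth of these averages in $j$ should be at most polynomial, of order $2^{j\beta}$ with $\beta=\max\{\nu,\nu p\}$. This growth is absorbed by the decay $2^{-jM}$, and that is where the threshold on $M$ comes from.

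\textbf{Case $p>1$.} Apply H\"older in $y$ on each $A_j$ with exponents $p',p$:
\[
\int_{A_j}\|W^{1/p}(x)W^{-1/p}(y)\|\,|W^{1/p}(y)\vf(y)|\,dy\le \Big(\int_{B_j}\|W^{1/p}(x)W^{-1/p}(y)\|^{p'}dy\Big)^{1/p'}\Big(\int_{B_j}|W^{1/p}\vf|^p\Big)^{1/p}.
\]
Raise to the power $p$ and integrate in $x$; Minkowski/H\"older over $j$ is fine since $2^{-jM}$ is summable. After Fubini, the task reduces to bounding, for each cube $Q$ of side of order $2^jR^{-1}$ in a grid adapted to $|\cdot|_A$, the quantity $\int_{Q}\big(\fint_{3Q}\|W^{1/p}(x)W^{-1/p}(y)\|^{p'}dy\big)^{p/p'}dx$. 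This is where \eqref{eq:Roudenko} enters, after enlarging to a ball in $\cB_A$ containing both sets. Dilating the ball from radius $R^{-1}$ to $2^jR^{-1}$ costs a factor of at most $2^{j\nu(p-1)}$ in the normalisations, which fits the term $\nu+(p-1)\beta$.

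\textbf{Case $p\le1$.} H\"older is not available. Here I would first use that $\vf$ is band-limited to replace $\vf$ by its local maxima: the Plancherel--P\'olya/Peetre inequality at the scale $R^{-1}$ in the anisotropic setting gives $|\vf(y)|$ controlled by $\big(\fint_{B_A(y,R^{-1})}|\vf|^r\big)^{1/r}$ with polynomial weight. Combined with $(\sum a_j)^p\le\sum a_j^p$, this lets me work with the $p$-th powers directly. Then the $A_1$-type condition \eqref{eq:mA1} gives, for $y$ in a ball $B$ of radius $2^jR^{-1}$, the bound $\fint_B\|W^{1/p}(x)W^{-1/p}(y)\|^p dx\le [W]$. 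Dilating balls with doubling constant $2^{\nu}$ per step produces a loss of $2^{j(\nu+\beta)}$, and this is absorbed by the decay $2^{-jMp}$, since $M>(\nu+\beta)/p$.

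\textbf{Main obstacle.} The hardest step is this $p\le1$ reduction: making the local sup/mean-value estimate for vector band-limited functions work with matrix weights. I would apply it to the scalar functions $|W^{1/p}(z)\vf(\cdot)|$ for fixed reference points $z$. Alternatively, as in Frazier--Roudenko, I would use reducing operators for each ball, which replace $W$ on a ball $B$ by a constant matrix $A_B$ with $|A_B\vx|^p\asymp\fint_B|W^{1/p}\vx|^p$. The scalar doubling of Lemma \ref{le:doub}, uniform in $\vx$, then controls the change of reducing operators between nested balls by a factor of at most $2^{j\beta/p}$.
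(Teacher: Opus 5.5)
The paper does not prove this proposition itself. It imports it from \cite{mn_geo} as the anisotropic version of \cite[Lemma 4.4]{FrazRoud:2021a}. The $\beta$ in the threshold on $M$ presumably reflects the dilation losses of the weight in that Frazier--Roudenko-type argument, which works with reducing operators, doubling, and a sampling bound as in Proposition \ref{prop:samp}.

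Your primary route is different. For $p>1$ you use H\"older plus \eqref{eq:Roudenko} directly. For $p\le 1$ you use Peetre/Plancherel--P\'olya on the frozen-weight band-limited functions $W^{1/p}(z)\vf(\cdot)$, plus the ess-sup form \eqref{eq:mA1}. This works, is self-contained, and in fact needs weaker hypotheses on $M$ than stated. The cited route's advantage is that it runs on the same reducing-operator and sampling machinery the paper reuses later (Proposition \ref{prop:samp}, Lemma \ref{le:dou}, Proposition \ref{prop:di}). Your reducing-operator fallback would not close on doubling alone, though. Lemma \ref{le:doub} only controls $\|A_PA_Q^{-1}\|$, and you would still need the $\mathbf{A}_p$-dependent estimate $\sum_Q|Q|\sup_Q|A_Q\vf|^p\lesssim\|\vf\|^p_{L^p(W)}$.

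Your exponent bookkeeping needs correcting in both cases.

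\emph{Case $1<p<\infty$.} Apply H\"older on $B_A(x,2^jR^{-1})$, then apply \eqref{eq:Roudenko} once on the enlarged balls of a bounded-overlap cover at scale $2^jR^{-1}$. This bounds the $j$-th annular term in $L^p$ by $C2^{-j(M-\nu)}[W]_{\mathbf{A}_p(\cB_A)}\|\vf\|_{L^p(W)}$. The only loss is $R^\nu|B_A(x,2^jR^{-1})|\asymp 2^{j\nu}$, because the $\mathbf{A}_p$ constant is scale-invariant. It is not a $2^{j\nu(p-1)}$ loss tied to $\nu+(p-1)\beta$, and $\beta$ plays no role. So $M>\nu$ suffices, and band-limitation is not used at all.

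\emph{Case $0<p\le 1$.} You must discretize before taking $p$-th powers. Set $g_x:=W^{1/p}(x)\vf(\cdot)$, which is band-limited for fixed $x$.
\begin{itemize}
\item Bound the $j$-th annulus by $C2^{-j(M-\nu-a)}\sup_y|g_x(y)|(1+R|x-y|_A)^{-a}$.
\item Apply Peetre's inequality in integral form: $\sup_y|g_x(y)|^p(1+R|x-y|_A)^{-ap}\le CR^\nu\int|g_x(z)|^p(1+R|x-z|_A)^{-ap}\,dz$.
\item Insert $|g_x(z)|\le\|W^{1/p}(x)W^{-1/p}(z)\|\,|W^{1/p}(z)\vf(z)|$.
\item Integrate in $x$ over the balls $B_A(z,2^iR^{-1})$ using \eqref{eq:mA1}; this costs $2^{i\nu}$ against the decay $2^{-iap}$.
\end{itemize}
This requires $a>\nu/p$ and $M>\nu+a$, i.e.\ $M>\nu+\nu/p$. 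That is implied by the hypothesis $M>2\nu/p$, and no doubling exponent enters.
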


We now give a proof of Proposition \ref{prop:complete}.(c).
\begin{proof}[{Proof of Proposition \ref{prop:complete}.(c)}]
Let $\cC=\{C_k\}_{k\in\bZ^d}$ and $\cP=\{P_k\}_{k\in\bZ^d}$ be any two  structured coverings with associated BAPUs
$\Phi=\{\phi_k\}_{k\in\bZ^d}$ and $\Gamma=\{\gamma_k\}_{k\in\bZ^d}$, respectively, of the type considered in Definition \ref{def:bapu}. First, suppose $\Gamma$ is used for Definition \ref{def:mod} and let $\vf\in M^s_{p,q}(W)$.
We first notice, using the uniformly bounded height of any structured covering, that for $R\in \cR$,
\begin{equation}\label{eq:pdo}
\phi_{k}(D)\vf=\phi_{k}(D)\sum_{j\in F_{k}}\gamma_{j}(D)\vf,
\end{equation}
with $F_{k}=\{j\in \bZ^d:P_j\cap C_k\not=\emptyset\}$, where we recall that $\#F_{k}$ is bounded by a constant $n_0$ independent of $k$, see Definition \ref{def:adm}. Hence, by Proposition \ref{prop:main} and the decay condition from (iii) in Definition \ref{def:bapu},
$$\|\phi_{k}(D)\vf\|_{L^p(W)}\leq c\sum_{j\in F_{k}}\|\gamma_{j}(D)\vf\|_{L^p(W)}.$$
By the moderateness of $h$ and the construction of the structured coverings, whenever $C_k\cap P_j\not=\emptyset$, the corresponding scales are comparable; hence $|C_k|\asymp |P_j|$. It follows from this observation that
$$|C_k|^{s/\nu}\|\phi_{k}(D)\vf\|_{L^p(W)}\leq c'\sum_{j\in F_{k}}|P_j|^{s/\nu}\|\gamma_{j}(D)\vf\|_{L^p(W)}.$$

Using the uniform bounds on the cardinality of the set $F_{k}$, it is then straightforward to verify that
\begin{align*}
\bigg\|\bigg\{|C_k|^{s/\nu}\|\phi_{k}(D)\vf\|_{L^p(W)}\bigg\}_{k\in\bZ^d}\bigg\|_{\ell_q}&\lesssim
\bigg\|\bigg\{|P_j|^{s/\nu}\|\gamma_{j}(D)\vf\|_{L^p(W)}\bigg\}_{j\in\bZ^d}\bigg\|_{\ell_q}\asymp \|\vf\|_{M^s_{p,q}(W)}.
\end{align*}
By interchanging the roles of $\cC$ and $\cP$, we derive the reverse estimate and the claimed equivalence follows.
\end{proof}

\begin{remark}\label{re:sqrt}
    Using the same argument as in the proof of Proposition \ref{prop:complete}.(c), one can easily verify that
   $$ \bigg\|\bigg\{|C_j|^{s/\nu}\|\psi_j(D)\vf\|_{L^p(W)}\bigg\}_{j\in\bZ^d}\bigg\|_{\ell_q}$$
    yields an equivalent norm on $M^s_{p,q}(W)$ for $0<p<\infty$, $0<q\leq \infty$, where $\Psi=\{\psi_k\}_k$ is the modified ``square-root'' BAPU defined in Eq.\ \eqref{yesterday}.
    \end{remark}

\section{Discrete vector valued modulation spaces and norm characterizations}\label{sec:ms}
Often the notion of smoothness can be linked to a sparsity condition on a suitable expansion coefficient sequence for the function. In this section, we define a discrete vector-valued weighted modulation-type space together with a simple construction of adapted tight frames that will support a $\phi$-transform in the spirit of the classical construction by Frazier and Jawerth \cite{FrazJawe:1985a,FrazJawe:1990a}.

Let $\cC=\{C_k\}_{k\in\bZ^d}$ be an admissible covering of $\bR^d$ with associated BAPU 
   $\{\psi_k\}_{k\in\bZ^d}$ of the type considered in Definition \ref{def:bapu}. We have $C_k=\delta_{r_k}B_A(0,\rho)+\xi_k$ for suitable $r_k>0$ and $\xi_k\in\bR^d$. We now define a family of ``cubes'' $\cQ:=\cQ(\cC)$ associated with $\cC$,
   that will play the role of a substitute for the standard dyadic cubes.
   
 Using the positive parameters $\{r_k\}_{k\in\bZ^d}$ obtained from $\cC$, we define the sets
\begin{equation}\label{eq:qk}
Q(k,\ell):=\bigg\{y\in\bR^d:\delta_{r_k}y-\ell\in B_A(0,1)\bigg\}.
\end{equation}
For fixed $k$, we put $\cQ_k:=\cup_\ell Q(k,\ell)$, and uniformly in $k$ and $\ell$, we have $|Q(k,\ell)|\asymp r_k^{-\nu}$.
It is easy to verify that there exists $L<\infty$ so that uniformly in $x$ and $k$,  
\begin{equation}\label{eq:L}
\sum_{\ell\in\bZ^d}\mathbf{1}_{Q(k,\ell)}(x)\leq L,   
\end{equation}
where $\mathbf{1}_A$ denotes the characteristic function of a measurable set $A$.
For notational convenience, let
\begin{equation}\label{eq:xk}
    x_{k,\ell}:=\delta_{r_k^{-1}}\ell,
\end{equation}
and notice that 
\begin{equation}\label{eq:Qkk}
Q(k,\ell)=B_A\big(x_{k,\ell},r_k^{-1}\big).
\end{equation}
 Finally, denote the full family by  
\begin{equation}\label{eq:Q}
\cQ=\bigcup_{k\in\bZ^d} \cQ_k.    
\end{equation}

We now consider the following definition.

\begin{definition}\label{def:eqL}
Let $W:\Rd\rightarrow \bC^{N\times N}$ be a matrix weight, and let $\cC=\{C_k\}_{k\in\bZ^d}$  be an admissible covering of $\bR^d$ with associated BAPU 
   $\{\psi_k
   \}_{k\in\bZ^d}$ of the type considered in Definition \ref{def:bapu}. We let 
 $\cQ=\{Q(k,\ell)\}_{k,\ell}$ be the collection of sets defined in \eqref{eq:qk}.
We let $m^{s}_{p,q}(W)$ denote the collection of all vector-valued sequences 
$\vs=\{\vs_{k,\ell}\}_{{k,\ell}\in\bZ^d}$ such that
\begin{align*}
\|\{\vs_{k,\ell}\}_{{k,\ell}}\|_{m^{s}_{p,q}(W)}&:=\bigg\|\bigg\{r_k^s\bigg(\sum_{\ell\in\bZ^d}\big\| |Q(k,\ell)|^{-\frac{1}{2}}\vs_{k,\ell}\mathbf{1}_{Q(k,\ell)}\big\|_{L^p(W)}^p\bigg)^{1/p}\bigg\}_{k\in\bZ^d}\bigg\|_{\ell_q}.
\end{align*}
 For $q=\infty$, the $\ell^q$-norm is replaced by the supremum over $k$.
\end{definition}

Let us now consider a simple construction of a tight frame compatible with the decomposition provided by $\cC$. Consider the modified BAPU
$\{\psi_k\}_{k\in\bZ^d}$ given by \eqref{yesterday} associated with the
admissible covering $\mathcal{C}=\{T_k\mathcal{B}_A(0,\rho)\}_{k\in \bN}$
generated by $\{T_k=\delta_{r_k}\cdot+\xi_k\}_{k\in\bZ^d}$.  Recall that we have the scaling ${B}_A(0,2\rho)\subseteq K_\pi$
with $K_\pi:=[-\pi,\pi]^d$. Then we
consider the orthonormal system
$$e_{k,n}(\xi) :=
(2\pi)^{-\frac{d}2}r_k^{-\nu/2}\mathbf{1}_{K_\pi}(T_k^{-1}\xi)e^{in\cdot
  T_k^{-1}\xi},\qquad n,k\in \bZ^d,$$
and define
\begin{equation}
  \label{eq:ltf}
  \hat\psi_{k,n} :=\psi_ke_{k,n}= (2\pi)^{-\frac{d}2}r_k^{-\nu/2}\psi_n(\cdot)e^{in\cdot
  T_k^{-1}\cdot},\qquad n,k\in \bZ^d,
\end{equation}
where we used that $\text{supp}(\psi_k)\subseteq T_kB_A(0,\rho)$.
It is straightforward to verify that
$\{\psi_{k,n}\}_{k,n}$ is a tight frame for $L_2(\bR^d)$; for further details, see, e.g., \cite[Section 6]{BoruNiel:2008a}.

We would like to estimate the frame coefficients for any $\vf\in M_{p,q}^{s}(W)$. Considering the band-limited nature of the system from \eqref{eq:ltf}, it is not surprising that one way to obtain such an estimate is to invoke a suitable sampling result adapted to the covering $\cC$. The following proposition was proved by the author in \cite{mn_geo}.

\begin{proposition}\label{prop:samp}
    Let $0<p<\infty$ and suppose $W\in\mathbf{A}_p(\cB_A)$, and  put $B=B_A(c,r_k)\in\cB_A$. 
Then there exists a constant $c_{p,d}$ such that for $\vg\in E_B\cap L^p(W)$,
$$\sum_{\ell\in \bZ^d}\int_{Q_{(k,\ell)}}\big|W^{1/p}(x)\vg\big(\delta_{r_k}^{-1}\ell\big)\big|^p\,dx\leq c_{p,n}\|\vg\|_{L^p(W)}^p,$$
where $Q(k,\ell)$ is defined in Eq.\ \eqref{eq:qk}.
\end{proposition}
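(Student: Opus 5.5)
The plan is to reduce the claim to a pointwise maximal-type estimate for band-limited functions, followed by a weighted boundedness statement. First I would normalize by dilation. Replacing $\vg$ by $\vg\circ\delta_{r_k^{-1}}$ and $W$ by $W\circ\delta_{r_k^{-1}}$ moves the spectrum into $B_A(\delta_{r_k^{-1}}c,1)$. This does not change $[W]_{\mathbf{A}_p(\cB_A)}$, because $\cB_A$ is invariant under the affine maps $T_\eta$ by \eqref{eq:aff}. It also turns $Q(k,\ell)$ into $B_A(\ell,1)$. Modulating $\vg$ by $e^{-ic\cdot x}$ does not change $|W^{1/p}(x)\vg(y)|$, so we may further assume $c=0$. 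This reduces the claim to
\[
\sum_{\ell}\int_{B_A(\ell,1)}|W^{1/p}(x)\vg(\ell)|^p\,dx\lesssim\int|W^{1/p}\vg|^p
\]
for $\vg$ with spectrum in $B_A(0,1)$.

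Next I would derive a local mean-value inequality. Fix $\phi\in\cS$ with $\hat\phi=1$ on $B_A(0,1)$, so that $\vg=\phi*\vg$. For $x\in B_A(\ell,1)$ write
\[
W^{1/p}(x)\vg(\ell)=\int W^{1/p}(x)W^{-1/p}(y)\,W^{1/p}(y)\vg(y)\,\phi(\ell-y)\,dy,
\]
and use $|\phi(\ell-y)|\lesssim(1+|x-y|_A)^{-M}$, valid by the quasi-triangle inequality \eqref{eq:Qtriangle}. The case $1<p<\infty$ is handled directly. Apply H\"older on each annulus $B_A(x,2^j)\setminus B_A(x,2^{j-1})$, and bound the resulting factor $\big(\fint_{B_A(x,2^j)}\|W^{1/p}(x)W^{-1/p}(y)\|^{p'}dy\big)^{1/p'}$ using \eqref{eq:Roudenko}. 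Then integrate over $x$ in $B_A(\ell,1)$ and sum over $\ell$, using the bounded overlap \eqref{eq:L}. This yields an expression controlled by $\sum_j 2^{-j(M-C)}\int|W^{1/p}\vg|^p$, provided the doubling of Lemma \ref{le:doub} absorbs the growth of the balls, which holds for $M$ large. The $x$-integration has to occur \emph{inside} the $A_p$ average; this works after swapping the order of $\fint_B dx$ and the $y$-integral via Minkowski, because all $x\in B_A(\ell,1)$ share comparable balls.

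For $0<p\le1$ the H\"older step fails. Instead I would use the Plancherel–P\'olya–Nikol'skij type inequality
\[
|\vg(\ell)|_{V}^p\lesssim\sum_{m}(1+|\ell-m|_A)^{-M}\sup_{B_A(m,1)}|\,\cdot\,|^p,
\]
combined with the $A_1$-type condition \eqref{eq:mA1}. That condition gives $\|W^{1/p}(x)W^{-1/p}(t)\|^p$ bounded on average over $x$, for essentially every fixed $t$. For this I would use the subharmonic-type estimate $|\vg(y)|^p\lesssim\int|\vg(z)|^p(1+|y-z|_A)^{-M}dz$, valid for band-limited functions, applied to the scalar band-limited entries of the vector $\vx^{*}\vg$. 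It is applied after freezing the matrix $W^{1/p}(x)$ for fixed $x$, so the functions involved are genuinely band-limited.

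The main obstacle is this $p\le1$ step. The frozen matrix $W^{1/p}(x)$ must be moved from the evaluation point to the integration variable $z$, where the $\mathbf{A}_p$ constant only controls averages over balls of size comparable to the distance. I would handle this by splitting into dyadic annuli, with a geometric gain $2^{-j(M-\beta)}$ from the decay, and by using Lemma \ref{le:sc}(ii) together with Lemma \ref{le:doub} to bound the $2^{j\beta}$ growth of weighted ball measures.
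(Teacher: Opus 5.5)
The paper does not prove Proposition~\ref{prop:samp} itself; it is quoted from \cite{mn_geo}, so there is no in-paper argument to compare with. Your plan is the standard Roudenko/Frazier--Roudenko scheme transplanted to $|\cdot|_A$-balls, and it works:
for $1<p<\infty$, H\"older plus the $\mathbf{A}_p$ condition on enlarged balls;
for $0<p\le 1$, freezing $W^{1/p}(x)$, applying a scalar Plancherel--P\'olya/Peetre inequality to $y\mapsto W^{1/p}(x)\vg(y)$, and then transferring the matrix.

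The step you flag as the main obstacle is immediate, and needs neither Lemma~\ref{le:doub} nor Lemma~\ref{le:sc}(ii).
\begin{itemize}
\item After freezing and integrating in $x$, you must control $\int_{B_A(\ell,1)}\|W^{1/p}(x)W^{-1/p}(z)\|^p\,dx$.
\item For $z\in B_A(\ell,2^j)$, apply condition \eqref{eq:mA1} to the single ball $B_A(\ell,2^j)\supseteq B_A(\ell,1)$. This bounds the integral by $[W]_{\mathbf{A}_p(\cB_A)}|B_A(\ell,2^j)|\lesssim 2^{j\nu}$ for a.e.\ $z$. Only countably many balls occur, so all exceptional null sets can be discarded at once.
\item Combine this with $|W^{1/p}(x)\vg(z)|\le\|W^{1/p}(x)W^{-1/p}(z)\|\,|W^{1/p}(z)\vg(z)|$. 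The resulting factor $(1+|\ell-z|_A)^{\nu}$ is absorbed by the decay $(1+|\ell-z|_A)^{-M}$.
\item The sum over $\ell$ is then uniformly bounded once $M>2\nu$.
\end{itemize}
Routing this through the scalar $A_1$ weights $w_{\vg(z)}$ is workable but clumsier. The exceptional set in the $A_1$ inequality then depends on the vector $\vg(z)$, which varies with $z$, so you would need an extra continuity argument in the vector variable.

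Similarly, for $1<p<\infty$ no doubling is needed. Condition \eqref{eq:Roudenko} on $B_A(\ell,C2^j)$ gives the factor $|B_A(\ell,C2^j)|^{1+p/p'}\asymp 2^{j\nu p}$ directly, and the decay of $\phi$ absorbs it.

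Two points of fine print remain:
\begin{itemize}
\item $\hat\phi$ should equal $1$ on a neighbourhood of $\overline{B_A(0,1)}$, not just on the ball.
\item For $p<1$, the Peetre-type inequality needs a priori finiteness of the maximal function. You get this with uniform constants by applying the inequality to $\vg\,\psi(\epsilon\,\cdot)$, where $\hat\psi$ is compactly supported and $|\psi|\le 1=\psi(0)$, and then letting $\epsilon\to 0$.
\end{itemize}
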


By adapting Proposition \ref{prop:samp}, we can derive the following result.

\begin{proposition}\label{prop:coef}
 Let $W:\Rd\rightarrow \bC^{N\times N}$ be a matrix weight, and let $\cC=\{C_k\}_{k\in\bZ^d}$  be an admissible covering of $\bR^d$ with associated BAPU 
   $\{\psi_k
   \}_{k\in\bZ^d}$ of the type considered in Definition \ref{def:bapu}. Let $0< p<\infty$, $0<q\leq\infty$, and suppose $W\in \mathbf{A}_p(\cB_A)$. Then there exists a constant $C:=C(p,q,W,\mathcal C)$ such that for $\vf\in M_{p,q}^{s}(W)$,
  \begin{equation}
      \|\{\vc_{k,\ell}\}_{k,\ell}\|_{m_{p,q}^{s}(W)}\leq C \|\vf\|_{M_{p,q}^{s}(W)},
  \end{equation}
  with $\vc_{k,\ell}:=\langle \vf,\psi_{k,\ell}\rangle$.
\end{proposition}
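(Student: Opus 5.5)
The plan is to write each frame coefficient as a sample of a band-limited function and then use the sampling estimate of Proposition \ref{prop:samp} one scale $k$ at a time. Fix $k$ and set $\vg_k:=\psi_k(D)\vf$ (up to the complex conjugation convention; since $\psi_k$ is real-valued this does not matter). The Fourier transform of each coordinate of $\vg_k$ is supported in $C_k=T_kB_A(0,\rho)=B_A(\xi_k,\rho r_k)$. Using \eqref{eq:ltf}, I would compute
\[
\vc_{k,\ell}=\langle \vf,\psi_{k,\ell}\rangle=(2\pi)^{-d/2}r_k^{-\nu/2}\int \hat\vf(\xi)\psi_k(\xi)e^{-i\ell\cdot T_k^{-1}\xi}\,d\xi .
\]
Since $T_k^{-1}\xi=\delta_{r_k}^{-1}(\xi-\xi_k)$ and $\delta_{r_k}$ is symmetric, this gives
\[
\ell\cdot T_k^{-1}\xi=(\delta_{r_k^{-1}}\ell)\cdot\xi-\ell\cdot\delta_{r_k}^{-1}\xi_k .
\]
Hence $\vc_{k,\ell}=c\,r_k^{-\nu/2}e^{i\theta_{k,\ell}}\,\vg_k(x_{k,\ell})$, with $x_{k,\ell}=\delta_{r_k}^{-1}\ell$ as in \eqref{eq:xk}, a unimodular phase $e^{i\theta_{k,\ell}}$, and a fixed constant $c$. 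Together with $|Q(k,\ell)|\asymp r_k^{-\nu}$, this yields
\[
\big\||Q(k,\ell)|^{-1/2}\vc_{k,\ell}\mathbf{1}_{Q(k,\ell)}\big\|_{L^p(W)}^p\asymp \int_{Q(k,\ell)}\big|W^{1/p}(x)\vg_k(\delta_{r_k}^{-1}\ell)\big|^p\,dx ,
\]
since the powers of $r_k$ cancel.

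Summing over $\ell$ and applying Proposition \ref{prop:samp} with $\vg=\vg_k$ gives
\[
\sum_\ell\big\||Q(k,\ell)|^{-1/2}\vc_{k,\ell}\mathbf{1}_{Q(k,\ell)}\big\|_{L^p(W)}^p\lesssim\|\psi_k(D)\vf\|_{L^p(W)}^p .
\]
Multiplying by $r_k^{s}$ and taking the $\ell_q$ norm over $k$ then gives the claim, because by Remark \ref{re:sqrt} (with $|C_k|^{s/\nu}\asymp r_k^s$) the resulting quantity is equivalent to $\|\vf\|_{M^s_{p,q}(W)}$. If $\|\psi_k(D)\vf\|_{L^p(W)}=\infty$ for some $k$ there is nothing to prove, so we may assume $\vg_k\in E_B\cap L^p(W)$.

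The main obstacle is that Proposition \ref{prop:samp} is stated for $B=B_A(c,r_k)$ with the samples taken on exactly the lattice $\delta_{r_k}^{-1}\bZ^d$, whereas our support ball has radius $\rho r_k$. Since $\rho\le 1$, we have $B_A(\xi_k,\rho r_k)\subseteq B_A(\xi_k,r_k)$, so $\vg_k\in E_{B_A(\xi_k,r_k)}$ and the sets $Q(k,\ell)=B_A(x_{k,\ell},r_k^{-1})$ match those in Proposition \ref{prop:samp}. The constant there depends only on $p$, $d$ and $[W]_{\mathbf{A}_p(\cB_A)}$, by the dilation/translation invariance \eqref{eq:aff} of $\cB_A$ and of the $\mathbf{A}_p(\cB_A)$ constant. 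It is therefore uniform in $k$, which is exactly what the $\ell_q$ summation needs.

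A second point to check is that the pairing $\langle\vf,\psi_{k,\ell}\rangle$ makes sense for $\vf\in\bigoplus\ddS$ and equals the point evaluation of the smooth band-limited function $\vg_k$. This holds because $\psi_{k,\ell}\in\cS$, each $\hat\psi_{k,\ell}$ is a compactly supported smooth function, and $\hat\psi_{k,\ell}=\psi_k\cdot(\text{a smooth exponential})$, so the identity follows from the definition of the Fourier transform on $\cS'$.
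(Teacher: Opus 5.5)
Your proposal is correct and is essentially the paper's own proof. You write $\langle \vf,\psi_{k,\ell}\rangle$ as $|Q(k,\ell)|^{1/2}$ times a unimodular multiple of a sample of $\psi_k(D)\vf$ on the lattice $\delta_{r_k}^{-1}\bZ^d$, enlarge the support ball from radius $\rho r_k$ to $r_k$ using $\rho\le 1$, apply Proposition \ref{prop:samp} at each scale with a $k$-independent constant, and finish with Remark \ref{re:sqrt}.

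One small sign point, which the paper's ``direct calculation'' shares: with the phase $e^{+i\ell\cdot T_k^{-1}\xi}$ in \eqref{eq:ltf} and the standard inverse Fourier transform, your integral $\int\hat\vf(\xi)\psi_k(\xi)e^{-ix_{k,\ell}\cdot\xi}\,d\xi$ is a multiple of $\vg_k(-x_{k,\ell})$, not $\vg_k(x_{k,\ell})$. For the sample to sit at the center of $Q(k,\ell)$, as Proposition \ref{prop:samp} requires and as the localization \eqref{one} presupposes, read \eqref{eq:ltf} with the opposite sign in the phase; this changes nothing else in the argument or in the tight-frame property.
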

\begin{proof}
Recall that $\psi_{k,\ell}\in E_{C_k}$ with
$$\hat{\psi}_{k,\ell}(\xi)=
(2\pi)^{-{d}/2}r_k^{-\nu/2}\psi_k(\xi)e^{in\cdot
  T_k^{-1}\xi}
,
$$
so, by a direct calculation, we have
$$|W^{1/p}(x)\langle \vf,\psi_{k,\ell}\rangle|=(2\pi)^{-{d}/2}|Q(k,\ell)|^{1/2}\bigg|W^{1/p}(x)[\psi_k(D)\vf]\bigg(\delta_{r_k}^{-1}\ell\bigg)\bigg|.$$
Notice that $\text{supp}(\psi_k)\subseteq T_kB_A(0,\rho)\subseteq T_kB_A(0,1)$, so we may call on Proposition \ref{prop:samp} to obtain the following estimate,
\begin{align*}
  \|  \{\langle \vf,\psi_{k,\ell}\rangle\}_{k,\ell} \|_{m_{p,q}^{s}(W)}&=
  \big\|\big\{r_k^s\sum_{\ell\in\bZ^d}\big\| |Q(k,\ell)|^{-\frac{1}{2}}\langle \vf,\psi_{k,\ell}\rangle\mathbf{1}_{Q(k,\ell)}\big\|_{L^p(W)}\big\}_k\big\|_{\ell_q}\\
  &\asymp\bigg\|\bigg\{r_k^s\bigg[\sum_{\ell\in \bZ^d}\int_{Q(k,\ell)}\big|W^{1/p}(x)[\psi_k(D)\vf] \big(r_k^{-1}\ell\big)\big|^p dx\bigg]^{1/p}\bigg\}_k\bigg\|_{\ell_q}\\
  &\leq c_{p,q}\big\|\big\{r_k^s \|\psi_k(D)\vf\|_{L^p(W)}\big\}_k\big\|_{\ell_q}\\
  &\asymp \|\vf\|_{M_{p,q}^{s}(W)},
\end{align*}
where we have used Definition \eqref{def:eqL} and the observation from Remark \ref{re:sqrt} for the final estimate.
\end{proof}

  We turn our attention to the canonical reconstruction/synthesis operator $T$ for the tight frame $\{\psi_{j,\ell}\}$.  The reconstruction/synthesis operator $T$ is defined for (finite) sequences $\vc=\{\vc_{j,\ell}\}$ by $$T\vc:=\sum_{{j,\ell}}\vc_{j,\ell} \psi_{j,\ell}.$$ 
  We will prove that $T$ is bounded from $m_{p,q}^{s}(W)$ to $M_{p,q}^{s}(W)$ for suitable weights $W$.

\begin{remark}\label{rem:T}
The fact that $\{\psi_{j,\ell}\}$ is a redundant system in $L^2(\mathbb{R}^d)$ makes it a nontrivial issue to verify that $T$ is actually \textit{well-defined} with an extension to all of $m_{p,q}^{s}(W)$ based on the result for finite sequences presented in Proposition \ref{prop:recon} below. We shall not pursue the details here, but rather refer the reader to the general approach to the issue presented in \cite[Section 3]{GeorJohnNiel:2017a}, which is applicable to the present setup.
\end{remark}

The doubling condition for a matrix weight will play a central role. Using the notation introduced in Section \ref{sec:HS}, we have the following definition.

\begin{definition}\label{def:doub}
    We say that the matrix weight $W:\cD\rightarrow \bC^{N\times N}$ satisfies the doubling condition of order $0<p<\infty$ if there is a constant $c$ such that
    for all $x\in\cD$, $\vy\in\bC^N$, and $r>0$,
    \begin{equation}\label{eq:doub}
        \int_{B_A(x,2r)} |W^{1/p}(t)\vy|^p\,dt\leq c\int_{B_A(x,r)} |W^{1/p}(t)\vy|^p\,dt.
    \end{equation}
    Let $c =2^\beta$, with $\beta\geq \nu$, be the smallest constant for which \eqref{eq:doub} holds, then $\beta$ is called the doubling exponent of $W$.
\end{definition}

\begin{remark}\label{rem:doub}
 It follows directly from Lemma \ref{le:doub}
that Eq.\ \eqref{eq:doub} is always satisfied whenever $W \in \mathbf{A}_p(\cB_A)$, $0<p<\infty$.
 \end{remark} 
 
 We have the following result that, in particular, applies to any matrix weight in $\mathbf{A}_p(\cB_A)$, cf.\ Remark \ref{rem:doub}. 
\begin{proposition}\label{prop:recon}
Let $W:\Rd\rightarrow \bC^{N\times N}$ be a matrix weight, and let $\cC=\{C_k\}_{k\in\bZ^d}$  be an admissible covering of $\bR^d$ with associated BAPU 
   $\{\phi_k
   \}_{k\in\bZ^d}$ of the type considered in Definition \ref{def:bapu}. Let $0< p<\infty$, $0<q\leq \infty$, and suppose $W$ satisfies the doubling condition in Eq.\ \eqref{eq:doub}. Then there exists a constant $C:=C(p,q,W)$ such that for any finite vector-valued coefficient sequence  $\vs:=\{\vc_{j,\ell}\}_{(j,\ell)\in F}$, $F\subset \bZ^d\times\bZ^d$,
  \begin{equation}
      \bigg\|\sum_{(j,\ell)\in F}\vc_{j,\ell}\psi_{j,\ell}\bigg\|_{M_{p,q}^{s}(W)}\leq C \|\{\vc_{j,\ell}\}\|_{m_{p,q}^{s}(W)}.
  \end{equation}
  
\end{proposition}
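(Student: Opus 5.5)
The plan is to estimate each local component $\phi_k(D)\vf$ of $\vf:=\sum_{(j,\ell)\in F}\vc_{j,\ell}\psi_{j,\ell}$ pointwise, using only the fast decay of the frame elements, and then to integrate against the weight using nothing but the doubling condition \eqref{eq:doub}.

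\emph{Step 1 (localization in frequency).} Since $\supp(\hat\psi_{j,\ell})\subseteq C_j$ and the covering is admissible, $\phi_k(D)\psi_{j,\ell}\neq 0$ only for $j$ in the finite set $\tilde k$ with $\#\tilde k\le n_0$. For such $j$ we have $r_j\asymp r_k$ by moderateness and \eqref{eq:esc}. Hence
\[
\phi_k(D)\vf=\sum_{j\in\tilde k}\sum_{\ell}\vc_{j,\ell}\,\phi_k(D)\psi_{j,\ell}.
\]

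\emph{Step 2 (pointwise decay of the atoms).} Write $\hat\psi_{j,\ell}$ as the modulated and dilated bump $\psi_j e^{i\ell\cdot T_j^{-1}\cdot}$. The function $\phi_k\psi_j\circ T_j$ is smooth and compactly supported in $B_A(0,\rho)$, with derivative bounds uniform in $j,k$; this uniformity is where \eqref{eq:esc} and the BAPU construction are used. Integrating by parts therefore gives, for every $M>0$,
\[
|\phi_k(D)\psi_{j,\ell}(x)|\le C_M\, r_j^{\nu/2}\,\bigl(1+r_j|x-x_{j,\ell}|_A\bigr)^{-M}.
\]
Here $x_{j,\ell}=\delta_{r_j^{-1}}\ell$ is the center of $Q(j,\ell)$ from \eqref{eq:Qkk}. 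Since $|Q(j,\ell)|^{-1/2}\asymp r_j^{\nu/2}$, this gives the pointwise bound
\[
|W^{1/p}(x)\phi_k(D)\vf(x)|\lesssim\sum_{j\in\tilde k}\sum_\ell \bigl(1+r_j|x-x_{j,\ell}|_A\bigr)^{-M}\,|Q(j,\ell)|^{-1/2}\,|W^{1/p}(x)\vc_{j,\ell}|.
\]

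\emph{Step 3 (integrating against the weight; the main step).} For $p\le1$, use the $p$-triangle inequality to bring the power $p$ inside the sum. For $p>1$, use Hölder's inequality, splitting $(1+r_j|x-x_{j,\ell}|_A)^{-M}$ into two factors; the sum $\sum_\ell(1+r_j|x-x_{j,\ell}|_A)^{-Mp'/2}$ is uniformly bounded because the points $x_{j,\ell}$ form a lattice of mesh $r_j^{-1}$ in the $A$-geometry. In both cases everything reduces to the key estimate
\[
\int_{\bR^d}\bigl(1+r_j|x-x_{j,\ell}|_A\bigr)^{-M'}|W^{1/p}(x)\vy|^p\,dx\le C\int_{Q(j,\ell)}|W^{1/p}(x)\vy|^p\,dx,\qquad \vy\in\bC^N,
\]
with $M'>\beta$, where $\beta$ is the doubling exponent. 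To prove it, decompose $\bR^d$ into the annuli $B_A(x_{j,\ell},2^{m}r_j^{-1})\setminus B_A(x_{j,\ell},2^{m-1}r_j^{-1})$. Iterating \eqref{eq:doub} bounds the weighted measure of the $m$-th ball by $2^{m\beta}$ times the weighted measure of $Q(j,\ell)=B_A(x_{j,\ell},r_j^{-1})$. On the $m$-th annulus the kernel is at most $2^{-mM'}$, so the geometric series converges once $M$ is chosen large relative to $\beta$ and $p$.

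\emph{Step 4 (summing over $\ell$ and $k$).} Summing over $\ell$ gives $\|\phi_k(D)\vf\|_{L^p(W)}^p\lesssim\sum_{j\in\tilde k}\sum_\ell\||Q(j,\ell)|^{-1/2}\vc_{j,\ell}\mathbf 1_{Q(j,\ell)}\|_{L^p(W)}^p$. Multiply by $r_k^{sp}\asymp r_j^{sp}$ and take the $\ell_q$ norm over $k$. The bounded overlap $\#\tilde k\le n_0$, together with the bounded number of $k$ associated with each $j$, yields the $m^s_{p,q}(W)$ norm.

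The main obstacle is Step 3 in the case $p>1$: the matrix weight must not be moved off the vector $\vc_{j,\ell}$. This is respected by applying Hölder's inequality only to the scalar kernel factors, so that the doubling of the scalar measures $|W^{1/p}\vy|^p\,dx$ is used with the fixed vector $\vy=\vc_{j,\ell}$.
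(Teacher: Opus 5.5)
Your argument is correct, but it handles the multiplier $\phi_k(D)$ differently from the paper.

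The paper first removes $\phi_k(D)$ using Proposition~\ref{prop:main}, which bounds $\|\phi_k(D)\sum_{j\in N_k}\sum_\ell\vc_{j,\ell}\psi_{j,\ell}\|_{L^p(W)}$ by $\|\sum_{j\in N_k}\sum_\ell\vc_{j,\ell}\psi_{j,\ell}\|_{L^p(W)}$. Only then does it use the pointwise decay of the frame elements $\psi_{j,\ell}$ themselves. You instead absorb $\phi_k$ into the atom and prove the decay bound for $\phi_k(D)\psi_{j,\ell}$ directly, from the uniform smoothness of $(\phi_k\psi_j)\circ T_j$. This is legitimate. Condition (iii) of Definition~\ref{def:bapu}, which holds for every $M$, gives uniform derivative bounds on $\phi_k\circ T_k$, and \eqref{eq:esc} transfers them to $\phi_k\circ T_j$ for intersecting pairs. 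From that point on, your H\"older/$p$-triangle step, the annulus-plus-doubling estimate (exactly Lemma~\ref{le:sq}), and the bounded-overlap summation coincide with the paper.

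What your route buys is that it uses nothing beyond the doubling condition \eqref{eq:doub}, which is precisely the hypothesis of the statement. Proposition~\ref{prop:main}, by contrast, requires $W\in\mathbf{A}_p(\cB_A)$. So the paper's proof, as written, establishes the result only for $\mathbf{A}_p(\cB_A)$ weights, while yours covers all doubling weights. Your route also avoids having to place the Fourier support of $\sum_{j\in N_k}\sum_\ell\vc_{j,\ell}\psi_{j,\ell}$ inside a single ball of radius comparable to $r_k$ on which $\phi_k$ satisfies \eqref{eq:e1}.

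The paper's route is more modular: it reuses a general multiplier lemma and needs decay only for the $\psi_{j,\ell}$ themselves. It pays for this with the stronger assumption on the weight.
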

\begin{proof}
We have, using the fact that $\text{supp}(\phi_k),\text{supp}(\hat{\psi}_{k,\ell})\subseteq C_k$,

\begin{align*}
      \bigg\|\sum_{(j,\ell)\in F}\vc_{j,\ell}\psi_{j,\ell}\bigg\|_{M_{p,q}^{s}(W)}&=
      \bigg\|\bigg\{r_k^s\bigg\|\phi_k(D)\sum_{(j,\ell)\in F}\vc_{j,\ell}\psi_{j,\ell}\bigg\|_{L^p(W)}\bigg\}_k\bigg\|_{\ell_q}\\
      &= \bigg\|\bigg\{r_k^s\bigg\|\phi_k(D)\sum_{j\in N_k}\sum_\ell\vc_{j,\ell}\psi_{j,\ell}\bigg\|_{L^p(W)}\bigg\}_k\bigg\|_{\ell_q},
\end{align*}
where $N_k=\{j\in\bZ^d:C_j\cap C_k\not=\emptyset\}$. Now, $r_j\asymp r_k$ (uniformly) for $j\in N_k$ by the moderation property of $h$, see Definition \ref{entertainus}. Hence, by Proposition \ref{prop:main},

\begin{align*}
r_k^s\bigg\|\phi_k(D)\sum_{j\in N_k}\sum_\ell\vc_{j,\ell}\psi_{j,\ell}\bigg\|_{L^p(W)}
     & \leq C r_k^s\bigg\|\sum_{j\in N_k}\sum_\ell\vc_{j,\ell}\psi_{j,\ell}\bigg\|_{L^p(W)}\\
     &\leq C' \sum_{j\in N_k}\bigg\|r_j^s \sum_\ell\vc_{j,\ell}\psi_{j,\ell}\bigg\|_{L^p(W)},
\end{align*}
where we used that $\# N_k$ is uniformly bounded in $k$.
First, suppose $1<p<\infty$. We now use that $\psi_{j,\ell}$ satisfies the decay property \eqref{eq:e1} for any $N>0$ to obtain the estimate

\begin{align*}
\bigg\| \sum_\ell\vc_{j,\ell}\psi_{j,\ell}\bigg\|_{L^p(W)}^p&\leq 
\int_{\bR^d} \bigg(\sum_\ell |W^{1/p}(x)\vc_{j,\ell}|\, |\psi_{j,\ell}(x)|\bigg)^p\,dx\\
&\leq C_N\int_{\bR^d} \bigg(r_j^{\nu/2}\sum_\ell |W^{1/p}(x)\vc_{j,\ell}|\, \big(1+r_j|x-x_{j,\ell}|_A\big)^{-N}\bigg)^p\,dx\\
&\leq C_N'r_j^{\nu p/2}\int_{\bR^d} \sum_\ell |W^{1/p}(x)\vc_{j,\ell}|^p\, \big(1+r_j\big|x-x_{j,\ell}\big|_A\big)^{-\frac{Np}{2}}\,dx,
    \end{align*}
where we used the discrete H\"older inequality for the last step with $N$ chosen large enough that, for the dual H\"older exponent $p'$ to $p$,
$$
  \sup_{u\in\bR^d} \sum_{\ell\in\bZ^d} \big(1+\big|u-\ell\big|_A\big)^{-\frac{Np'}{2}} <\infty,$$
where the existence of such $N$ can be deduced from the estimate in Eq.~\eqref{natasha}.
In the remaining case, $0<p\leq 1$, we obtain a similar estimate more directly by using sublinearity.

  The function $w_{j,\ell}(x):=|W^{1/p}(x)\vc_{j,\ell}|^p$ is doubling with a doubling constant $\beta>0$ independent of $j$ and $\ell$. We can therefore use Lemma \ref{le:sq} below to obtain the following estimate,
  \begin{align*}
\bigg\| \sum_\ell\vc_{j,\ell}\psi_{j,\ell}\bigg\|_{L^p(W)}^p&\leq 
C_N'r_j^{\nu p/2}\sum_\ell\int_{\bR^d}  |W^{1/p}(x)\vc_{j,\ell}|^p\, \big(1+r_j\big|x-x_{j,\ell}\big|_A\big)^{-\frac{Np}{2}}\,dx\\
&\leq 
C_N'r_j^{\nu p/2}\sum_{\ell\in\bZ^d}\int_{Q(j,\ell)}  |W^{1/p}(x)\vc_{j,\ell}|^p\, dx\\
&\asymp\sum_\ell \big\|  |Q(j,\ell)|^{-1/2} \vc_{j,\ell} \mathbf{1}_{Q(j,\ell)}\big\|_{L^p(W)}^p.
\end{align*}
We may now conclude that
\begin{align}
      \bigg\|\sum_{(j,\ell)\in F}\vc_{j,\ell}\psi_{j,\ell}\bigg\|_{M_{p,q}^{s}(W)}&\leq C
     \bigg\|\bigg\{  \sum_{j\in N_k}\bigg\|r_j^s \sum_\ell\vc_{j,\ell}\psi_{j,\ell}\bigg\|_{L^p(W)}\bigg\}_k\bigg\|_{\ell_q}\nonumber\\
     &\leq 
 C'  \bigg\|\bigg\{  \sum_{j\in N_k}r_j^s\bigg(\sum_\ell\big\|  |Q(j,\ell)|^{-1/2} \vc_{j,\ell} \mathbf{1}_{Q(j,\ell)}\big\|_{L^p(W)}^p\bigg)^{1/p}\bigg\}_k\bigg\|_{\ell_q}\nonumber\\  
   &\leq 
 C''  \bigg\|\bigg\{  \sum_{k}r_k^s\bigg(\sum_\ell \big\| |Q(k,\ell)|^{-1/2} \vc_{k,\ell} \mathbf{1}_{Q(k,\ell)}\big\|_{L^p(W)}^p\bigg)^{1/p}\bigg\}_k\bigg\|_{\ell_q}\nonumber\\
 &\lesssim \|\{\vc_{j,\ell}\}\|_{m_{p,q}^{s}(W)},\label{eq:LW}
 \end{align}
 where we have used the uniform bound on the cardinality of $N_k$ and
 that $r_j\asymp r_k$ (uniformly) for $j\in N_k$ by the moderation property of $h$. This concludes the proof.
 
\end{proof}

\begin{remark}\label{rem:dense}
Since $\{\psi_{k,\ell}\}_{k,\ell}\subset \mathcal{S}(\bR^d)$, it follows easily from Proposition  \ref{prop:coef} and Proposition \ref{prop:recon}, by standard arguments, that $\bigoplus_{j=1}^N\dS$ is dense in $M_{p,q}^{s}(W)$ whenever $0<p<\infty$, $0<q<\infty$, and $W\in\mathbf{A}_p(\cB_A)$.
\end{remark}

\begin{lemma}\label{le:sq}
Let $w:\bR^d \rightarrow (0,\infty)$ be a function satisfying the doubling condition
  \begin{equation*}\label{eq:doub2}
        \int_{B_A(x,2r)}w(t)\,dt\leq c\int_{B_A(x,r)} w(t)\,dt,\qquad x\in\bR^d,r>0,
    \end{equation*}
 with doubling exponent $\beta\geq \nu$ such that $2^\beta=c$. Let $j,\ell\in\bZ^d$ and let the quantities $Q(j,\ell)$, $r_j$, $x_{j,\ell}$ be defined by Eqs.\ \eqref{eq:qk}, \eqref{eq:rk} and \eqref{eq:xk}, respectively. Then for $L>\beta$, we have 
$$\int_{\bR^d}  w(x)\big(1+r_j\big|x-x_{j,\ell}\big|_A\big)^{-L}\,dx \leq C\int_{Q(j,\ell)}  w(x)\, \,dx,$$
where $C$ depends only on $c$ and $L$.
\end{lemma}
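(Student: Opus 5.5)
We decompose $\bR^d$ into dyadic $|\cdot|_A$-annuli around $x_{j,\ell}$ at scale $r_j^{-1}$, and use the doubling condition to compare the $w$-mass of each annulus with the $w$-mass of $Q(j,\ell)=B_A(x_{j,\ell},r_j^{-1})$, cf.\ Eq.~\eqref{eq:Qkk}. Write $B_0:=B_A(x_{j,\ell},r_j^{-1})$ and, for $m\ge 1$,
$$A_m:=B_A(x_{j,\ell},2^m r_j^{-1})\setminus B_A(x_{j,\ell},2^{m-1}r_j^{-1}).$$
Then $\bR^d=B_0\cup\bigcup_{m\ge1}A_m$, since every point lies at finite $|\cdot|_A$-distance from $x_{j,\ell}$. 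On $B_0$ the factor $(1+r_j|x-x_{j,\ell}|_A)^{-L}$ is at most $1$. On $A_m$ we have $r_j|x-x_{j,\ell}|_A\ge 2^{m-1}$, so the factor is at most $2^{-(m-1)L}$.

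The key step is iterating the doubling inequality $m$ times with the fixed centre $x_{j,\ell}$ and radii $2^{i}r_j^{-1}$, $i=0,\dots,m-1$:
$$\int_{B_A(x_{j,\ell},2^m r_j^{-1})}w\,dt\le c^m\int_{B_0}w\,dt=2^{m\beta}\int_{Q(j,\ell)}w\,dt.$$
This uses only the hypothesis as stated, with balls centred at the same point, so no quasi-triangle constant $C_A$ enters.

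Combining the two estimates gives
$$\int_{\bR^d}w(x)(1+r_j|x-x_{j,\ell}|_A)^{-L}dx\le \Big(1+\sum_{m\ge1}2^{-(m-1)L}2^{m\beta}\Big)\int_{Q(j,\ell)}w\,dx=\Big(1+2^{L}\sum_{m\ge1}2^{-m(L-\beta)}\Big)\int_{Q(j,\ell)}w\,dx.$$
The geometric series converges exactly when $L>\beta$, and the resulting constant depends only on $c=2^\beta$ and $L$, as claimed.

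The one point to check is that $\int_{Q(j,\ell)}w<\infty$, so that the right-hand side is meaningful. If it is infinite the inequality is trivial. Otherwise the doubling bound makes every annulus integral finite, and the monotone sum above controls the whole integral.
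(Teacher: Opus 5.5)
Your proof is correct and follows the paper's argument essentially verbatim. You decompose $\bR^d$ into $Q(j,\ell)$ and dyadic $|\cdot|_A$-annuli around $x_{j,\ell}$ at scale $r_j^{-1}$, bound each annulus by $2^{m\beta}\int_{Q(j,\ell)}w$ using iterated doubling with a fixed centre, and sum a geometric series that converges for $L>\beta$. Your constant bookkeeping ($c^m=2^{m\beta}$, with $|\cdot|_A$ used consistently in the decay factor) is in fact slightly cleaner than the paper's written version.
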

\begin{proof}
We make a partition $\bR^d=\cup_{m=0}^\infty R_m$, where $R_0=Q(j,\ell)$ and the annuli $R_m$, $m\geq 1$, are defined by
$$R_m:=\bigg\{y\in\bR^d: 2^{m-1}r_j^{-1}\leq |y-x_{j,\ell}|_A <  2^mr_j^{-1}\bigg\}.$$
Then 
\begin{align*}
 \int_{\bR^d}  w(x)\big(1+r_j\big|x-x_{j,\ell}\big|\big)^{-L}\,dx&=
\sum_{m=0}^\infty \int_{R_m}  w(x) \big(1+r_j\big|x-x_{j,\ell}\big|\big)^{-L}\,dx\\
&\leq C \sum_{m=0}^\infty 2^{-mL}\int_{R_m}  w(x)\, \,dx\\
\end{align*}
However, by the doubling property of $w(x)$, noting that $R_m\subseteq \{y: |y-x_{j,\ell}|_A < 2^mr_j^{-1}\}$, we have
$$\int_{R_m}  w(x)\, \,dx \leq c2^{\beta m}\int_{R_0}  w(x)\, \,dx,$$
so
$$\int_{\bR^d}  w(x)\big(1+r_j\big|x-x_{j,\ell}\big|_A\big)^{-L}\,dx \leq C' \sum_{m=0}^\infty  2^{(\beta -L)m}\int_{R_0}  w(x)\, \,dx\leq C''\int_{R_0}  w(x)\, \,dx,$$
provided  $L>\beta$.
\end{proof}

\section{Stable expansions and almost diagonal matrices}\label{sec:4}
The band-limited tight frame $\{\psi_{j,k}\}$ defined in \eqref{eq:ltf} provides a useful stable decomposition system for $M^{s}_{p,q}(W)$ whenever $W\in \mathbf{A}_p(\cB_A)$. It is, however, desirable to extend the stability results to cover more general systems of localized ``molecules" because this will allow us to study, e.g., boundedness of various operators acting on $M^{s}_{p,q}(W)$. In this section, we will study molecules in a discretized setting using an adapted notion of almost diagonal matrices. We first introduce a variation of $m_{p,q}^s(W)$ based on the notion of reducing operators.
\subsection{Reducing operators and the connection to scalar spaces}
It is known that for any matrix weight $W:\cD\rightarrow \bC^{N\times N}$, $0< p<\infty$, and $Q=Q(k,\ell)$, one of the cubes from \eqref{eq:qk}, there exists a nonnegative-definite matrix $A_Q$ such that for $\vx\in\bC^N$,
$$c_1|A_Q\vx|\leq  \rho_{p,Q}(\vx):=\left(\frac{1}{|Q|}\int_Q |W^{1/p}(t)\vx|^p\,dt\right)^{1/p}\leq c_2|A_Q\vx|,$$
where the equivalence constants $c_1$ and $c_2$ can be chosen independent of $Q$ and $\vx$. 
For $1\leq p<\infty$, the existence of $A_Q$ and the uniform equivalence constants follows directly from the Ellipsoid Theorem of F.\ John, see \cite{John:2014a}, while for $0<p<1$ the argument is more involved and can be found in \cite[Section 5]{FrazRoud:2004a}. 
$A_Q$ is referred to as a \textit{reducing operator} for the norm $\rho_{p,Q}$ on $\bC^N$. Frazier and Roudenko, see \cite{Roud:2003a,FrazRoud:2004a,FrazRoud:2021a}, made the important observation that reducing operators can be used to make certain connections between the matrix-weighted Besov space and the scalar $\phi$-transform introduced by Frazier and Jawerth \cite{FrazJawe:1990a}. We will now use a similar approach to study the vector-valued modulation-type spaces. We shall need the following definition.

\begin{definition}\label{def:red}
Let $W:\Rd\rightarrow \bC^{N\times N}$ be a matrix weight, $s\in \bR$, $0< p<\infty$, and $0<q\leq \infty$. Suppose $\cC$ is an admissible covering of $\bR^d$ with associated BAPU 
   $\{\phi_k\}_{k\in\bZ^d}$ of the type considered in Definition \ref{def:bapu}. Let $\{A_{Q(k,\ell)}\}_{k,\ell\in\bZ^d}$ be a family of reducing operators associated with $W$. For any vector-valued sequence $\{\vs_{k,\ell}\}_{k,\ell}$, we define, 
$$\|\{\vs_{k,\ell}\}_{k,\ell}\|_{m_{p,q}^{s}(\{A_{Q(k,\ell)}\})}:=
\bigg\|\bigg\{ r_k^s\bigg(\sum_{\ell\in\bZ^d} \big\| |Q(k,\ell)|^{-\frac{1}{2}}|A_{Q(k,\ell)}\vs_{k,\ell}|\mathbf{1}_{Q(k,\ell)}\big\|_{L^p(dt)}^p\bigg)^{1/p}\bigg\}_k\bigg\|_{\ell^q},$$
and
$$m_{p,q}^{s}(\{A_{Q(k,\ell)}\})=\Big\{\{\vs_{k,\ell}\}_{k,\ell}:\|\{\vs_{k,\ell}\}_{k,\ell}\|_{m_{p,q}^{s}(\{A_{Q(k,\ell)}\})}<\infty\Big\}.$$
\end{definition}

We have the following observation.
\begin{lemma}\label{le:eqi}
Let $W:\Rd\rightarrow \bC^{N\times N}$ be a matrix weight, and let $\cC$ be an admissible covering of $\bR^d$ with associated BAPU 
   $\{\phi_k\}_{k\in\bZ^d}$ of the type considered in Definition \ref{def:bapu}. Let $s\in \bR$, $0< p<\infty$, and $0<q\leq \infty$. For any finite vector-valued sequence $\vs=\{\vs_{k,\ell}\}_{k,\ell}$, we have
$$\|\{\vs_{k,\ell}\}_{k,\ell}\|_{m_{p,q}^{s}(W)}\asymp
\|\{\vs_{k,\ell}\}_{k,\ell}\|_{m_{p,q}^{s}(\{A_{Q(k,\ell)}\})},$$
with equivalence constants independent of $\vs$.
\end{lemma}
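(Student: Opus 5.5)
The plan is to compare the two quasi-norms term by term. The outer structure, namely the weights $r_k^s$, the $\ell^q$-norm in $k$ and the $\ell^p$-sum in $\ell$, is identical in both definitions. It therefore suffices to show, uniformly in $k,\ell$ and in $\vs_{k,\ell}\in\bC^N$, that
$$\big\| |Q(k,\ell)|^{-\frac12}\vs_{k,\ell}\mathbf{1}_{Q(k,\ell)}\big\|_{L^p(W)}\asymp \big\| |Q(k,\ell)|^{-\frac12}|A_{Q(k,\ell)}\vs_{k,\ell}|\mathbf{1}_{Q(k,\ell)}\big\|_{L^p(dt)}.$$
Once this holds, raising to the $p$-th power, summing over $\ell$, taking the $1/p$-th power, multiplying by $r_k^s$ and applying the $\ell^q$ quasi-norm (or the supremum when $q=\infty$) preserves the equivalence with the same constants.

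For the termwise comparison, fix $Q=Q(k,\ell)$ and $\vx=\vs_{k,\ell}$. The factor $|Q|^{-1/2}$ is a scalar and appears on both sides, so it cancels. On the left we have
$$\Big(\int_Q|W^{1/p}(t)\vx|^p\,dt\Big)^{1/p}=|Q|^{1/p}\rho_{p,Q}(\vx).$$
On the right, the integrand $|A_Q\vx|$ is constant in $t$, so the norm equals $|Q|^{1/p}|A_Q\vx|$. The defining property of the reducing operator, $c_1|A_Q\vx|\le\rho_{p,Q}(\vx)\le c_2|A_Q\vx|$ with $c_1,c_2$ independent of $Q$ and $\vx$, then gives the termwise equivalence with constants $c_1,c_2$.

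Finiteness of the sequence means no convergence issues arise, so all sums are finite and the manipulations are legitimate for every $0<p<\infty$ and $0<q\le\infty$. The only nontrivial ingredient is the existence of reducing operators with uniform constants: for $p\ge1$ this comes from John's ellipsoid theorem, and for $p<1$ from the Frazier–Roudenko construction, both quoted above. The main point to check, therefore, is that these constants depend only on $N$ and $p$ and not on the particular set $Q(k,\ell)$. This holds because the constructions apply to any measurable set of positive finite measure and only need $\rho_{p,Q}$ to be a (quasi-)norm on $\bC^N$, which holds since $W$ is positive definite a.e. Note that no Muckenhoupt condition on $W$ is needed for this lemma.
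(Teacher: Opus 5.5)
Your proposal is correct and follows the same route as the paper: both reduce the claim to the termwise identity $\| |Q|^{-1/2}\vx\mathbf{1}_Q\|_{L^p(W)}=|Q|^{1/p-1/2}\rho_{p,Q}(\vx)$ combined with the uniform equivalence $\rho_{p,Q}(\vx)\asymp|A_Q\vx|$, and then carry this through the identical outer $\ell^p$/$\ell^q$ structure (your write-up is tidier than the paper's displayed chain, which drops the $1/p$-th power in its intermediate lines). One small caveat on your side remark: positive definiteness of $W$ alone does not make $\rho_{p,Q}$ finite, so the existence of reducing operators also requires local integrability of $|W^{1/p}\vx|^p$, which the paper likewise takes for granted.
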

\begin{proof}
We simply adapt the proof presented in \cite{FrazRoud:2004a} for dyadic cubes to the current setup.
\begin{align*}
    \|\{\vs_{k,\ell}\}_{k,\ell}\|_{m_{p,q}^{s}(W)}
    &=
    \bigg\|\bigg\{r_k^s \bigg(\sum_{\ell\in\bZ^d} \big\||Q(k,\ell)|^{-\frac{1}{2}}\big|W^{1/p}(t)\vs_{k,\ell}\big|\mathbf{1}_{Q(k,\ell)}(t)\big\|_{L^p(dt)}^p\bigg)^{1/p}\bigg\}_k\bigg\|_{\ell^q}\\
    &\asymp \bigg\|\bigg\{ r_k^s\bigg(\sum_{\ell\in\bZ^d} |Q(k,\ell)|^{-\frac{p}{2}}[\rho_{p,Q(k,\ell)}(\vs_{k,\ell})^p|Q(k,\ell)|\bigg\}_k\bigg\|_{\ell^q}\\
     &\asymp \bigg\|\bigg\{ r_k^s\sum_{\ell\in\bZ^d} |Q(k,\ell)|^{-\frac{p}{2}}|A_{Q(k,\ell)}\vs_{k,\ell}|^p|Q(k,\ell)|\bigg\}_k\bigg\|_{\ell^q}\\
     &\asymp\bigg\|\bigg\{ r_k^s\bigg(\sum_{\ell\in\bZ^d}  \big\| |Q(k,\ell)|^{-\frac{1}{2}}|A_{Q(k,\ell)}\vs_{k,\ell}|\mathbf{1}_{Q(k,\ell)}\big\|_{L^p(dt)}^p\bigg)^{1/p}\bigg\}_k\bigg\|_{\ell^q}\\
     &=\|\{\vs_{k,\ell}\}_{k,\ell}\|_{m_{p,q}^{s}(\{A_{Q(k,\ell)}\})}.
\end{align*}
\end{proof}

There is a straightforward connection between the (quasi-)norm given by $\|\,\cdot\,\|_{m_{p,q}^{s}(\{A_{Q(k,\ell)}\})}$ and the scalar discrete decomposition space norm $\|\cdot\|_{m^{s}_{p,q}}$, defined for a scalar sequence $t:=\{t_{k,\ell}\}$ by
\begin{equation}\label{eq:scalar}
    \|t\|_{m^{s}_{p,q}}:=\bigg\|\bigg\{ r_k^s\bigg(\sum_{\ell\in\bZ^d}\big\| |Q(k,\ell)|^{-\frac{1}{2}}t_{k,\ell}\mathbf{1}_{Q(k,\ell)}\big\|_{L^p(dt)}^p\bigg)^{1/p}\bigg\}_k\bigg\|_{\ell^q},
\end{equation}
see \cite{BoruNiel:2008a,NielRasm:2012a} for further details on discrete (scalar) decomposition spaces.
Given a vector-valued sequence $\vs:=\{\vs_{k,\ell}\}_{k,\ell}$, we define the scalar sequence $t:=\{t_{Q(k,\ell)}\}$ by putting
$$t_{k,\ell}:=|A_{Q(k,\ell)}\vs_{k,\ell}|.$$
Then we clearly have
\begin{equation}\label{eq:connect}
    \|\vs\|_{m_{p,q}^{s}(\{A_{Q(k,\ell)}\})}=\|t\|_{m^{s}_{p,q}}.
\end{equation}
The direct connection to scalar decomposition spaces provided by Eq.\ \eqref{eq:connect} simplifies the study of matrix-valued function spaces, as several results from the existing theory of scalar decomposition spaces can simply be restated in the matrix-weighted setting. This fundamental connection was first identified by Frazier and Roudenko in their work on matrix-weighted Besov spaces \cite{FrazRoud:2004a}.

\subsection{Almost diagonal matrices} The identity provided by Eq.\ \eqref{eq:connect}, combined with Lemma \ref{le:eqi}, allows us to use operators on the scalar space $m^{s}_{p,q}$ to study the vector-valued $m^{s}_{p,q}(W)$.  For this purpose, it will be useful to recall the notion of almost diagonal matrices for the scalar spaces $m^{s}_{p,q}$ introduced by Rasmussen and the author in \cite{NielRasm:2012a}.

In order to maintain ``compatibility'' with the results in \cite{NielRasm:2012a}, we add some further restrictions to the moderate function $h$ used to generate admissible coverings.
We shall henceforth assume that there exist $\kappa,R_1,\rho_1>0$ such that:
\begin{equation}\label{eq:cases}
 \begin{cases}
    h^{1+\kappa}\text{ is moderate, and,}\\
     |\xi-\zeta|_A\leq ah(\xi)\text{ for }a\geq \rho_1 \text{ implies } h(\zeta)\leq R_1ah(\xi).
    \end{cases}
\end{equation}
It should be noted that the additional assumptions are not excessively restrictive. As an example, one can easily verify that
$h_\eta(\xi)=(1+|\xi|_A)^\eta$, for $0< \eta<1$, satisfies all the requirements. 

With these assumptions in place, we can now define various classes of almost diagonal matrices.
\begin{definition}\label{doitmad}%
Let $\cC=\{C_k\}_{k\in\bZ^d}$ be an admissible covering of $\bR^d$ induced by the affine transformations $T_k y:=\delta_{r_k}y+\xi_k$.
Assume that $s\in \bR$, $0<q< \infty$, and $0<{p}<\infty$. A matrix $\mathbf{B}:=\{b_{(j,m)(k,n)}\}_{j,m,k,\ell\in\bZ^d}$
is called almost diagonal on $m^{s}_{{p},q}$ if
there exist $J\geq \frac{\nu}{\min\{1,p,q\}}$ and  $C, \delta>0$ such that
\begin{align*}
|b_{(j,\ell)(k,m)}|
&\le C \omega_{(j,\ell)(k,m)}^s(J), \qquad j,m,k,n\in \Zn,
\end{align*}
where
\begin{align*}
\omega_{(j,\ell)(k,m)}^s(J):=\bigg(\frac{r_k}{r_j}\bigg)^{s+\frac{\nu}{2}}
\min\bigg(\bigg(\frac{r_j}{r_k}\bigg)^{J+\frac{\delta}{2}},&\bigg(\frac{r_k}{r_j}\bigg)^{\frac{\delta}{2}}\bigg) c_{jk}^\delta(J)\\
&\phantom{C}\times
(1+\min(r_k,r_j)|x_{k,m}-x_{j,\ell}|_A)^{-J-\delta},
\end{align*}
with
\begin{equation*}
c_{jk}^\delta(J):=\min\bigg(\bigg(\frac{r_j}{r_k}\bigg)^{J+\delta},\bigg(\frac{r_k}{r_j}\bigg)^{\delta}\bigg)
(1+\max(r_k,r_j)^{-1}|\xi_k-\xi_j|_A)^{-J-\delta}
\end{equation*}
with $r_k$, $\xi_k$, and $x_{k,n}$ defined in Eqs.\ \eqref{eq:rk} and \eqref{eq:xk}.
We denote the set of almost diagonal matrices on $m^{s}_{{p},q}$  by $\textrm{ad}_{{p},q}^{s}$.
\end{definition}
\begin{remark}\label{rem:ad}
A more symmetric sufficient condition for the matrix $\mathbf{B}:=\{b_{(j,\ell)(k,m)}\}_{j,m,k,\ell\in\bZ^d}$ to be in $\textrm{ad}_{{p},q}^{s}$ can be obtained by requiring the existence of  $J>\frac{\nu}{\min\{1,p,q\}}$ and $M>\min\{2J,|s|+\nu/2\}$ such that
\begin{align}
    |b_{(j,\ell)(k,m)}|&\leq  C\min\bigg\{\bigg(\frac{r_j}{r_k}\bigg)^{M},\bigg(\frac{r_k}{r_j}\bigg)^{M}\bigg\}(1+\min(r_k,r_j)|x_{k,m}-x_{j,\ell}|_A)^{-J}\nonumber\\
&\hspace{2cm}\times (1+\max(r_k,r_j)^{-1}|\xi_k-\xi_j|_A)^{-J}.\label{eq:ad_symm}
\end{align}   
\end{remark}
Any matrix $\mathbf{B}=\{b_{(j,\ell)(k,m)}\}_{j,m,k,\ell\in\bZ^d}$ in $\textrm{ad}_{{p},q}^{s}$ induces a linear operator on $m^{s}_{p,q}$ by the action of the standard matrix vector product. Specifically, let $s\in m^{s}_{p,q}$ be a finite sequence and put $t=\mathbf{B}s$, i.e.,
$$t_{j,\ell}:=\sum_{(k,m)\in\bZ^d\times\bZ^d} b_{(j,\ell)(k,m)}s_{k,m},\qquad (j,\ell)\in \bZ^d\times\bZ^d.$$
It was proved in \cite{NielRasm:2012a} that almost diagonal matrices act boundedly on the class $m^{s}_{p,q}$.

\begin{proposition}\label{johnlennon}%
Suppose that $\mathbf{B} \in \textrm{ad}_{{p},q}^{s}$. Then $\mathbf{B}$ is bounded on
$m^{s}_{p,q}$.
\end{proposition}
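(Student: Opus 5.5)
We will establish the boundedness of $\mathbf{B}$ on $m^{s}_{p,q}$ by a Schur-type argument that separates the two structural parameters: the scale/frequency index $j$ versus $k$, and the spatial index $\ell$ versus $m$. We first normalize the sequence. Since $|Q(k,\ell)|\asymp r_k^{-\nu}$, we have
\begin{equation*}
\|s\|_{m^s_{p,q}}\asymp \bigg\|\bigg\{r_k^{s+\frac{\nu}{2}-\frac{\nu}{p}}\Big(\sum_\ell |s_{k,\ell}|^p\Big)^{1/p}\bigg\}_k\bigg\|_{\ell^q}.
\end{equation*}
Put $\tilde s_{k,m}:=r_k^{s+\nu/2-\nu/p}s_{k,m}$. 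The matrix then becomes
\begin{equation*}
\tilde b_{(j,\ell)(k,m)}=(r_j/r_k)^{s+\nu/2-\nu/p}\,b_{(j,\ell)(k,m)},
\end{equation*}
and it suffices to show that $\tilde{\mathbf B}$ is bounded on the mixed-norm space $\ell^q(\ell^p)$, with the inner norm taken over $\ell$ and the outer over $k$.

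The second step is the spatial sum for fixed $j,k$. Let $r:=\min\{1,p\}$. For $p\le1$ we use the $r$-triangle inequality, $|t_{j,\ell}|^p\le\sum_m|b|^p|s_{k,m}|^p$. For $p>1$ we use Hölder or Young's inequality for the kernel $(1+\min(r_j,r_k)|x_{k,m}-x_{j,\ell}|_A)^{-J-\delta}$. The key counting estimate is
\begin{equation*}
\sup_\ell\sum_m (1+\min(r_j,r_k)|x_{k,m}-x_{j,\ell}|_A)^{-(J+\delta)r}\lesssim \max\{1,r_k/r_j\}^{\nu},
\end{equation*}
together with the symmetric bound obtained by swapping $\ell$ and $m$. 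The points $x_{k,m}=\delta_{r_k^{-1}}m$ form a lattice of density $r_k^{\nu}$, so the sum is comparable to $r_k^{\nu}\int(1+\min(r_j,r_k)|x|_A)^{-(J+\delta)r}dx$. This integral is finite by \eqref{eq:integral} because $(J+\delta)r>\nu$. This is precisely where $J\ge\nu/\min\{1,p,q\}$ enters. The outcome is
\begin{equation*}
\Big(\sum_\ell|(\tilde{\mathbf B}_{jk}s)_\ell|^p\Big)^{1/p}\lesssim a_{jk}\Big(\sum_m|\tilde s_{k,m}|^p\Big)^{1/p},
\end{equation*}
where $a_{jk}$ equals the prefactor $(r_k/r_j)^{\nu/p-\ldots}\min(\cdot)\,c^\delta_{jk}(J)$ multiplied by the density factors $\max\{1,r_k/r_j\}^{\nu/r}$ and $\max\{1,r_j/r_k\}^{\nu/r}$ arising from the two counting directions.

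The third step treats the frequency index. We will show that the scalar matrix $(a_{jk})$ is bounded on $\ell^q$ over $\bZ^d$. For $q\le1$ we use the $q$-triangle inequality; for $q>1$ we apply the Schur test, which requires
\begin{equation*}
\sup_j\sum_k a_{jk}^{\min\{1,q\}}<\infty\quad\text{and}\quad\sup_k\sum_j a_{jk}^{\min\{1,q\}}<\infty.
\end{equation*}
Here the extra powers $J+\delta/2$ and $\delta/2$ in the $\min$, together with $c^\delta_{jk}(J)$, should absorb the factors $(r_k/r_j)^{\nu/p}$ and leave a residual $\min\{(r_j/r_k)^{\delta'},(r_k/r_j)^{\delta'}\}$. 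What remains is to bound
\begin{equation*}
\sup_j\sum_k\min\{(r_j/r_k)^{\delta'},(r_k/r_j)^{\delta'}\}\,(1+\max(r_j,r_k)^{-1}|\xi_k-\xi_j|_A)^{-(J+\delta)\min\{1,q\}}.
\end{equation*}

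This final sum is the main obstacle. The plan is to group the indices $k$ according to dyadic ratio classes $r_k/r_j\in[2^{i},2^{i+1})$. Within each class, the balls $B_A(\xi_k,\rho' r_k)$ are disjoint by Lemma \ref{braunsugar}, so the sum over $k$ is dominated by $r_k^{-\nu}\int(1+\max(r_j,r_k)^{-1}|\xi-\xi_j|_A)^{-(J+\delta)\min\{1,q\}}d\xi\lesssim(\max(r_j,r_k)/r_k)^{\nu}$. This is again finite because of the lower bound on $J$. The resulting loss of $(r_j/r_k)^{\nu}$ when $r_k<r_j$ must be compensated by the factor $(r_k/r_j)^{\ldots}$. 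This is where the additional assumptions \eqref{eq:cases} on $h$ (the moderateness of $h^{1+\kappa}$ and the growth control $h(\zeta)\le R_1ah(\xi)$) are needed: they guarantee that the number of ratio classes that actually occur at a given frequency distance is controlled, so the geometric series in $i$ converges. If the bookkeeping does not close directly, a fallback is to follow the proof of \cite{NielRasm:2012a} verbatim, since $m^s_{p,q}$ is the identical scalar space studied there and the statement is quoted from that work.
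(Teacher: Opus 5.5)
The paper does not prove this proposition; it quotes it from \cite{NielRasm:2012a}. So your fallback is exactly what the paper does, and the new content to assess is your self-contained Schur argument. The normalization and the spatial step are correct. After normalizing, the $(j,k)$ block has $\ell^p\to\ell^p$ norm $a_{jk}$. For $q\le 1$ you only need $\sup_k\sum_j a_{jk}^q<\infty$, and your counting does close this. When $r_j<r_k$, the factor $c^\delta_{jk}(J)$ supplies $(r_j/r_k)^{J+\delta}$; when $r_j\ge r_k$, there is no multiplicity loss.

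\textbf{The gap is the case $1<q<\infty$,} where the Schur test also requires $\sup_j\sum_k a_{jk}<\infty$. For $r_k<r_j$, the factor $(r_k/r_j)^{\nu/p}$ is used up entirely by the spatial count, so
\[
a_{jk}\asymp (r_k/r_j)^{3\delta/2}\big(1+r_j^{-1}|\xi_k-\xi_j|_A\big)^{-J-\delta},
\]
with $\delta>0$ arbitrarily small. Meanwhile your disjointness count of the class $r_k\approx 2^i r_j$ costs $2^{-i\nu}$, so the sum diverges. Your appeal to \eqref{eq:cases} misidentifies the issue. The number of ratio classes is harmless; the problem is the multiplicity inside a single class. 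The second condition in \eqref{eq:cases} only gives $|\xi_k-\xi_j|_A\gtrsim r_j$. Moderateness of $h^{1+\kappa}$ gives $|\xi_k-\xi_j|_A\gtrsim r_j^{1+\kappa}$ when $r_k\ll r_j$, which gains $r_j^{-\kappa(J+\delta)}$. But a ball of radius $r_j^{1+\kappa}$ can contain about $(r_j^{1+\kappa}/r_k)^\nu$ indices. For $J=\nu$ the net bound is therefore still about $(r_j/r_k)^{\nu-3\delta/2}r_j^{-\kappa\delta}$.

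\textbf{This is not a bookkeeping issue.} Take $A=I$ (so $\nu=d$), $0<\tau<\eta$, and $\kappa>0$ with $\eta(1+\kappa)<1$. Take a sparse sequence $R_n\to\infty$ and balls $V_n$ of radius $cR_n^{\eta(1+\kappa)}$ centred at points of norm $R_n$, and set
\[
h(\xi)^{1+\kappa}:=\min\Big\{\brac{\xi}^{\eta(1+\kappa)},\,\inf_n\max\big(R_n^{\tau(1+\kappa)},\operatorname{dist}(\xi,V_n)\big)\Big\}.
\]
This is an infimum of uniformly moderate $1$-Lipschitz functions, so $h^{1+\kappa}$ and $h$ are moderate and satisfy \eqref{eq:growthh} and \eqref{eq:cases}. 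Moreover $h\equiv R_n^\tau$ on $V_n$, and $h\asymp R_n^\eta$ on a shell of width $\asymp R_n^{\eta(1+\kappa)}$ around $V_n$.

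For $j$ in that shell, at least of order $R_n^{d(\eta(1+\kappa)-\tau)}$ indices $k$ have $\xi_k\in V_n$. Each satisfies $a_{jk}\gtrsim R_n^{-\frac32\delta(\eta-\tau)-\eta\kappa(J+\delta)}$. With $J=d$ (allowed when $p,q\ge 1$) this gives
\[
\sum_k a_{jk}\gtrsim R_n^{d(\eta-\tau)-\frac32\delta(\eta-\tau)-\eta\kappa\delta}\to\infty
\]
for small $\delta$. One can also test $\mathbf B=\{\omega^s_{(j,\ell)(k,m)}(J)\}$ itself on the sequence equal to $r_k^{-(s+\nu/2-\nu/p)}$ at $(k,0)$ for $\xi_k\in V_n$ and zero elsewhere. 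The resulting norm ratio is of order $R_n^{(1-1/q)d(\eta-\tau)-\frac32\delta(\eta-\tau)-\eta\kappa\delta}$, which is unbounded for $q>1$.

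So the step you flag cannot be closed from the hypotheses recorded in this paper. An additional input is needed: either genuinely stronger decay in the regime $r_k<r_j$ (a factor $(r_k/r_j)^{\nu+\epsilon}$ suffices for your class count), or more structure on $h$, such as $h=\brac{\cdot}_A^{\eta}$, where fine scales lie at distance $\asymp|\xi_j|_A$ and the count is favourable. Deferring to \cite{NielRasm:2012a} is what the paper does, but it does not supply this step.
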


Next, we observe that the following elementary matrix estimate holds 
$$|A_{Q(k,\ell)}\vc|=|A_{Q(k,\ell)}A_{Q(j,m)}^{-1}A_{Q(j,m)}\vc|\leq \|A_{Q(k,\ell)}A_{Q(j,m)}^{-1}\|\cdot |A_{Q(j,m)}\vc|.$$
It is therefore of interest to study families of reducing operators that are ``compatible'' with the almost diagonal classes introduced. This leads to the following definition.
\begin{definition}
 Let $\cQ$ be the collection of sets defined in \eqref{eq:Q}, let $\{A_Q\}_{Q\in \cQ}$ be a sequence of nonnegative-definite matrices, and let $\beta>0$, $0<p<\infty$. We say that $\{A_Q\}_{Q\in \cQ}$ is strongly doubling of order $(\beta,p)$ if there exists $c>0$ such that for $P=Q(k,m)$ and $Q=Q(j,\ell)$,
\begin{equation}\label{eq:AQ}
    \|A_QA_P^{-1}\|\leq c\max\bigg\{\left(\frac{r_j}{r_k}\right)^{\nu/p},\left(\frac{r_k}{r_j}\right)^{(\beta-\nu)/p}\bigg\}
\big(1+\min\{r_j,r_k\}|x_{j,\ell}-x_{k,m}|_A\big)^{\beta/p}.\end{equation}
\end{definition}

We have the following lemma, where we recall that, as an important special case, any $W\in \mathbf{A}_p(\cB_A)$ is doubling of order $p$.
\begin{lemma}\label{le:dou}
Let $W$ be a doubling matrix weight of order $p > 0$ with doubling exponent $\beta$ as specified in Definition \ref{def:doub} and suppose $\{ A_Q\}_{Q\in\cQ}$ is a sequence of reducing operators of order $p$ for $W$. Then $\{A_Q\}$ is strongly doubling of order $(\beta,p)$.
\end{lemma}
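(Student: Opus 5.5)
Since both $A_Q$ and $A_P$ are nonnegative definite, I will bound $\|A_QA_P^{-1}\|$ through its action on vectors. Write $\vy=A_P^{-1}\vx$, so $|A_QA_P^{-1}\vx|=|A_Q\vy|$ and $|\vx|=|A_P\vy|$. It then suffices to show, uniformly in $\vy\in\bC^N$,
\begin{equation*}
|A_Q\vy|\lesssim \max\Big\{\Big(\frac{r_j}{r_k}\Big)^{\nu/p},\Big(\frac{r_k}{r_j}\Big)^{(\beta-\nu)/p}\Big\}\big(1+\min\{r_j,r_k\}|x_{j,\ell}-x_{k,m}|_A\big)^{\beta/p}|A_P\vy|.
\end{equation*}
By the reducing operator equivalence, $|A_Q\vy|^p\asymp |Q|^{-1}\int_Q w_\vy$ and $|A_P\vy|^p\asymp|P|^{-1}\int_P w_\vy$, where $w_\vy(t)=|W^{1/p}(t)\vy|^p$. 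Moreover $|Q|\asymp r_j^{-\nu}$ and $|P|\asymp r_k^{-\nu}$. The problem therefore reduces to a scalar comparison of averages of the doubling weight $w_\vy$ over the two balls $Q=B_A(x_{j,\ell},r_j^{-1})$ and $P=B_A(x_{k,m},r_k^{-1})$, with doubling constant $2^\beta$ uniform in $\vy$.

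The key step is a standard geometric lemma for doubling measures on the quasi-metric space $(\bR^d,|\cdot|_A)$. Set $R:=\max\{r_j^{-1},r_k^{-1}\}+C_A|x_{j,\ell}-x_{k,m}|_A$. Using \eqref{eq:Qtriangle}, the ball $B^*:=B_A(x_{k,m},cR)$ contains $Q$ for a suitable constant $c$. Iterating \eqref{eq:doub} roughly $\log_2(cRr_k)$ times gives
\begin{equation*}
\int_Q w_\vy\le \int_{B^*}w_\vy\lesssim (Rr_k)^{\beta}\int_P w_\vy .
\end{equation*}
Hence
\begin{equation*}
|A_Q\vy|^p\lesssim r_j^{\nu}r_k^{-\nu}(Rr_k)^\beta|A_P\vy|^p .
\end{equation*}

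Next I split into two cases. If $r_j\ge r_k$ (so $Q$ is the smaller ball), then $Rr_k\asymp 1+r_k|x_{j,\ell}-x_{k,m}|_A$, which equals $1+\min\{r_j,r_k\}|\cdot|_A$, and the prefactor is $(r_j/r_k)^{\nu}$. This matches the first entry of the max. If $r_j<r_k$, then $Rr_k\asymp (r_k/r_j)(1+r_j|x_{j,\ell}-x_{k,m}|_A)$, so the bound becomes $(r_j/r_k)^{\nu}(r_k/r_j)^{\beta}(1+\min\{r_j,r_k\}|\cdot|_A)^\beta$, i.e.\ $(r_k/r_j)^{\beta-\nu}$ times the decay factor. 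Taking $p$-th roots and the supremum over $|\vx|=1$ gives \eqref{eq:AQ}.

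The main obstacle is the quasi-triangle bookkeeping. I need the containment $Q\subseteq B^*$ with a fixed multiple of $R$, and I need to convert $\log_2$ of a non-dyadic ratio into a power with a uniform constant; a fractional ratio is handled by one extra doubling step. The remaining care is to make sure the equivalence $Rr_k\asymp\cdots$ holds in both regimes, using $1+ab\le a(1+b)$ for $a\ge1$. Invertibility of $A_P$ follows because $W$ is positive definite a.e., so $\rho_{p,P}$ is a norm.
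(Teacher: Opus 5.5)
Your proposal is correct and follows essentially the same route as the paper. You enclose $Q$ in a concentric dilate of $P$ via the quasi-triangle inequality and iterate the doubling condition for $w_{\mathbf{y}}$ uniformly in $\mathbf{y}$; your factor $(Rr_k)^\beta$ plays the role of the paper's $\omega^\beta$. You then pass to $\|A_QA_P^{-1}\|$ through the reducing-operator equivalence and $|Q|\asymp r_j^{-\nu}$, $|P|\asymp r_k^{-\nu}$, with the same case split on $r_j$ versus $r_k$, and your remarks on the non-dyadic ratio and the invertibility of $A_P$ fill in details the paper leaves implicit.
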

\begin{proof}
Let $Q=Q(j,\ell), P=Q(k,m)\in\cQ$, and let $\omega\geq 1$ be a minimal constant for which
$Q\subset B_A(x_{k,m},\omega r_k^{-1})=:\delta_\omega P$. 
Let us find an upper bound for $\omega$. Pick any $z\in Q(j,\ell)$, and notice by the quasi-triangle inequality \eqref{eq:Qtriangle},
\begin{equation}\label{eq:Qa}
|x_{k,m}-z|_A\leq C_A(|x_{j,\ell}-z|_A+|x_{k,m}-x_{j,\ell}|_A)\leq C_A(r_j^{-1}+|x_{k,m}-x_{j,\ell}|_A).
\end{equation}
Hence, if $r_k\leq r_j$, we simply use $r_j^{-1}\leq r_k^{-1}$ in \eqref{eq:Qa} to obtain
$$|x_{k,m}-z|_A \leq C_A(r_k^{-1}+|x_{k,m}-x_{j,\ell}|_A)=C_Ar_k^{-1}(1+r_k|x_{k,m}-x_{j,\ell}|_A).$$
In the other case, $r_j< r_k$, we obtain directly from \eqref{eq:Qa} that
$$|z-x_{k,m}|_A \leq C_A{r_k^{-1}}\bigg(\frac{r_k}{r_j}+r_k|x_{k,m}-x_{j,\ell}|_A\bigg).$$
Hence, we obtain the general estimate
\begin{equation}\label{eq:alp}
\omega\leq C_A \max\bigg\{1,\frac{r_k}{r_j}\bigg\}\big(1+\min\{r_j,r_k\}|x_{j,\ell}-x_{k,m}|_A\big).
\end{equation}
By the doubling property for $w_\vy(\cdot):=|W^{1/p}(\cdot)\vy|^p$, see Definition \ref{def:doub}, we have
$$w_\vy(Q)\leq w_\vy(\omega P)\leq c\omega^\beta w_\vy(P).$$
Hence,
$$|A_Q\vy|^p\lesssim \frac{1}{|Q|}\int_Q |W^{1/p}(x)\vy|^p\, dx= \frac{1}{|Q|} w_\vy(Q)
\lesssim \frac{1}{|Q|}\omega^\beta w_\vy(P)\lesssim
\frac{|P|}{|Q|}\omega^\beta |A_P\vy|^p.$$
Now we put $\vy=A_P^{-1}\vz$ for arbitrary $\vz\in\bR^d$, and recall that $|P|\asymp r_k^{-\nu}$, $|Q|\asymp r_j^{-\nu}$. Using estimate \eqref{eq:alp}, we obtain
$$|A_QA_P^{-1}\vz|^p\leq c \left(\frac{r_j}{r_k}\right)^\nu \max\bigg\{1,\frac{r_k}{r_j}\bigg\}^\beta
\big(1+\min\{r_j,r_k\}|x_{j,\ell}-x_{k,m}|_A\big)^\beta|\vz|^p,$$
and \eqref{eq:AQ} follows.
\end{proof}

The preceding lemma is deliberately formulated under the elementary doubling assumption.  For weights in the matrix Muckenhoupt class one can sometimes use a sharper form of the estimate, based on the $A_p$-dimension of the weight in the sense of Bu--Hyt\"onen--Yang--Yuan \cite[Lemma~2.29]{BuHytoYang:2024a}.  We record the anisotropic version here as a complement to Lemma \ref{le:dou}.

\begin{definition}\label{def:anisotropic-Ap-dimension}
Let $0<p<\infty$, let $d_0\in [0,\nu)$, and let $W\in\mathbf{A}_p(\cB_A)$. We say that $W$ has anisotropic $A_p$-dimension $d_0$ if there exists a constant $C>0$ such that, for every ball $B=B_A(x,r)\in\cB_A$ and every $\lambda\geq 1$, the following estimates hold.

If $0<p\leq 1$, then
\begin{equation}\label{eq:anisotropic-Ap-dim-small-p}
 \operatorname*{ess\,sup}_{y\in \lambda B}
 \fint_B \|W^{1/p}(z)W^{-1/p}(y)\|^p\,dz
 \leq C\lambda^{d_0}.
\end{equation}
If $1<p<\infty$, then
\begin{equation}\label{eq:anisotropic-Ap-dim-large-p}
 \fint_B
 \left(
   \fint_{\lambda B}
   \|W^{1/p}(z)W^{-1/p}(y)\|^{p'}\,dy
 \right)^{p/p'}dz
 \leq C\lambda^{d_0},
\end{equation}
where $p'$ denotes the conjugate exponent of $p$. Here
$\lambda B:=B_A(x,\lambda r)$.
\end{definition}

\begin{definition}\label{def:anisotropic-dimensions-triple}
Let $W\in\mathbf{A}_p(\cB_A)$. We say that $W$ has anisotropic dimension data $(d,\widetilde d)$ if $W$ has anisotropic $A_p$-dimension $d\in [0,\nu)$ and, in addition, either $0<p\leq 1$ and $\widetilde d=0$, or $1<p<\infty$ and the dual weight
\[
   \widetilde W:=W^{-1/(p-1)}
\]
has anisotropic $A_{p'}$-dimension $\widetilde d$. We put
\begin{equation}\label{eq:dimension-losses}
   \sigma_p(W):=
   \begin{cases}
      0, & 0<p\leq 1,\\[2mm]
      \widetilde d/p', & 1<p<\infty,
   \end{cases}
   \qquad
   \Delta_p(W):=d/p+\sigma_p(W),
   \qquad
   K_p(W):=\max\{d/p,\sigma_p(W)\}.
\end{equation}
\end{definition}

\begin{lemma}\label{le:dou-dimension}
Let $0<p<\infty$, and let $W\in\mathbf{A}_p(\cB_A)$ have anisotropic dimension data $(d,\widetilde d)$ in the sense of Definition \ref{def:anisotropic-dimensions-triple}. Let $\{A_Q\}_{Q\in\cQ}$ be a family of reducing operators of order $p$ associated with $W$. Then, for $Q=Q(j,\ell)$ and $P=Q(k,m)$,
\begin{equation}\label{eq:AQ-dimension}
 \|A_QA_P^{-1}\|
 \le C
 \max\left\{
 \left(\frac{r_j}{r_k}\right)^{d/p},
 \left(\frac{r_k}{r_j}\right)^{\sigma_p(W)}
 \right\}
 \left(
 1+\min\{r_j,r_k\}|x_{j,\ell}-x_{k,m}|_A
 \right)^{\Delta_p(W)},
\end{equation}
where $\sigma_p(W)$ and $\Delta_p(W)$ are defined in \eqref{eq:dimension-losses}. The constant is independent of $j,k,\ell,m$.
\end{lemma}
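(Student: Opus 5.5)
The plan is to follow the argument of Bu–Hytönen–Yang–Yuan \cite[Lemma~2.29]{BuHytoYang:2024a}, transplanted to the anisotropic balls $\cB_A$. The basic tool is the identity
\[
\|A_QA_P^{-1}\|\asymp \sup_{|\vz|=1}|A_QA_P^{-1}\vz|,
\]
combined with a comparison of both reducing operators with a common enlarged ball. Set $Q=Q(j,\ell)=B_A(x_{j,\ell},r_j^{-1})$ and $P=Q(k,m)=B_A(x_{k,m},r_k^{-1})$. The computation leading to \eqref{eq:alp} in the proof of Lemma \ref{le:dou} gives
\[
\lambda:=C_A\max\Big\{1,\tfrac{r_k}{r_j}\Big\}\big(1+\min\{r_j,r_k\}|x_{j,\ell}-x_{k,m}|_A\big)\ge 1
\]
with $Q\subset \lambda P$. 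By symmetry, with $\lambda':=C_A\max\{1,r_j/r_k\}(1+\min\{r_j,r_k\}|x_{j,\ell}-x_{k,m}|_A)$ we also have $P\subset\lambda' Q$. The two dimension estimates, \eqref{eq:anisotropic-Ap-dim-small-p}/\eqref{eq:anisotropic-Ap-dim-large-p} for $W$ and its $\widetilde W$ version, will be used with the roles of small and large ball distributed so that the exponent $d/p$ attaches to the ratio $r_j/r_k$ and $\sigma_p(W)$ attaches to $r_k/r_j$. The case split is $r_j\ge r_k$ versus $r_j<r_k$.

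\textbf{Case $0<p\le1$.} Here $|A_Q\vy|^p\asymp\fint_Q|W^{1/p}(x)\vy|^p\,dx$. Write $\vy=A_P^{-1}\vz$ and insert $W^{-1/p}(t)W^{1/p}(t)$ for $t\in P$:
\[
|W^{1/p}(x)\vy|\le \|W^{1/p}(x)W^{-1/p}(t)\|\,|W^{1/p}(t)\vy|.
\]
Average over $x\in Q$, raise to the power $p$, and then average over $t\in P$. If $P\subset\lambda' Q$, take the ess sup over $t\in \lambda' Q$ in \eqref{eq:anisotropic-Ap-dim-small-p} applied with $B=Q$. This yields
\[
|A_QA_P^{-1}\vz|^p\lesssim (\lambda')^{d}\fint_P|W^{1/p}(t)A_P^{-1}\vz|^p\,dt\asymp(\lambda')^d|\vz|^p,
\]
so $\|A_QA_P^{-1}\|\lesssim(\lambda')^{d/p}$. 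When $r_j\ge r_k$ this is exactly \eqref{eq:AQ-dimension}, since $\sigma_p=0$ and $\Delta_p=d/p$. When $r_j<r_k$ the needed bound has exponent $0$ on $r_k/r_j$, and $\lambda'$ carries no such factor, so the estimate is again in the required form.

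\textbf{Case $1<p<\infty$.} Use the standard duality for reducing operators: with $\widetilde A_P$ the reducing operator of order $p'$ for $\widetilde W=W^{-p'/p}$ (Remark \ref{re:dual}), one has $\|A_P\widetilde A_P\|\lesssim1$ and $|A_P^{-1}\vz|\lesssim|\widetilde A_P\vz|$ uniformly in $P$. Bound
\[
|A_QA_P^{-1}\vz|\le\Big(\fint_Q\Big(\fint_P\|W^{1/p}(x)W^{-1/p}(t)\|^{p'}dt\Big)^{p/p'}dx\Big)^{1/p}|\vz|,
\]
using Hölder in $t$ together with an averaged form of $|A_P^{-1}\vz|\lesssim|\widetilde A_P\vz|$. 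Then split according to which ball is larger.

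If $r_j\ge r_k$, then $P$ is the larger ball, and \eqref{eq:anisotropic-Ap-dim-large-p} for $W$ with $B=Q$ and $\lambda' Q\supset P$ costs $(\lambda')^{d/p}$, after comparing the average over $P$ with $|\lambda'Q|/|P|\lesssim(1+\dots)^\nu$. I expect that factor to be absorbed correctly only through a more careful route, namely chaining through $\widetilde A$.

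If $r_j<r_k$, then $Q$ is the larger ball, and one swaps roles: estimate $\|A_QA_P^{-1}\|=\|A_P^{-1}A_Q\|\lesssim\|\widetilde A_P\widetilde A_Q^{-1}\|$. Applying the $p'$-dimension condition for $\widetilde W$ then gives $\lambda^{\widetilde d/p'}$, which is $\lambda^{\sigma_p(W)}$.

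Multiplying the ratio powers out produces the $\max\{\cdot,\cdot\}$ factor. The exponent on $(1+\min\{r_j,r_k\}|x_{j,\ell}-x_{k,m}|_A)$ is at most $\max\{d/p,\sigma_p\}\le\Delta_p$, which is consistent with \eqref{eq:AQ-dimension}.

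The main obstacle is the case $p>1$ with $r_j\ge r_k$. There the naive averaging over $P\subset\lambda'Q$ loses a volume factor of order $\lambda'^{\nu}$. To avoid it I would first try a two-step chain through the concentric ball $\widetilde P:=B_A(x_{k,m},\lambda r_k^{-1})$ containing both $Q$ and $P$. Write $\|A_QA_P^{-1}\|\le\|A_QA_{\widetilde P}^{-1}\|\,\|A_{\widetilde P}A_P^{-1}\|$. The first factor, which compares a small ball with a large ball containing it, costs $\lambda^{d/p}$ via the $W$-dimension. The second factor, which compares two concentric balls, costs $\lambda^{\widetilde d/p'}$ via the $\widetilde W$-dimension and the duality $A\leftrightarrow\widetilde A^{-1}$. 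This is exactly why the displacement term carries the sum $\Delta_p(W)=d/p+\sigma_p(W)$.
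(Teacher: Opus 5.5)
There is a genuine gap in the case $1<p<\infty$; the case $0<p\le 1$ is fine. There, the dimension condition is an essential supremum over $t\in\lambda'Q\supset P$, so averaging in $t$ over $P$ costs nothing and you get exactly $(\lambda')^{d/p}$. For that range this is slightly more direct than the paper's chaining.

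\textbf{First problem: the subcase $r_j<r_k$.} The volume loss you detect for $r_j\ge r_k$ is not confined to that subcase. For $r_j<r_k$ you bound $\|A_QA_P^{-1}\|\lesssim\|\widetilde A_P\widetilde A_Q^{-1}\|$. The latter is controlled by $\big(\fint_P(\fint_Q\|W^{-1/p}(x)W^{1/p}(t)\|^{p}\,dt)^{p'/p}dx\big)^{1/p'}$. However, the $A_{p'}$-dimension of $\widetilde W$ only controls inner averages over $\lambda P\supset Q$, and $|\lambda P|/|Q|\asymp(1+r_j|x_{j,\ell}-x_{k,m}|_A)^{\nu}$. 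So this route gives $\lambda^{\widetilde d/p'}(1+r_j|x_{j,\ell}-x_{k,m}|_A)^{\nu/p}$, not $\lambda^{\widetilde d/p'}$.

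The bound $\lambda^{\widetilde d/p'}$ you assert is in fact false. Take $N=1$, $A=I$ and $w(x)=|x|^{-a}$ with $0<a<\nu$; this weight has dimension data $(a,0)$. Let $Q=Q(j,0)$ and $P=Q(k,m)$ with $r_k>r_j$ and $R:=|x_{k,m}|_A\gg r_j^{-1}$. Then $\|A_QA_P^{-1}\|\asymp (r_jR)^{a/p}$, which is unbounded. For the same reason, your remark that the displacement exponent is at most $\max\{d/p,\sigma_p(W)\}$ is wrong: the full $\Delta_p(W)$ is needed in both subcases.

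\textbf{Second problem: the cost of the first chain factor.} Your chain through $\widetilde P=\lambda P$ is the right repair. It is exactly the paper's argument: take a ball $S\supset Q\cup P$ of radius $\lesssim\max\{r_j^{-1},r_k^{-1}\}(1+\min\{r_j,r_k\}|x_{j,\ell}-x_{k,m}|_A)$ and use $\|A_QA_P^{-1}\|\le\|A_QA_S^{-1}\|\,\|A_SA_P^{-1}\|$. But it must be used in both subcases, and the first factor does not cost $\lambda^{d/p}$.

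Since $\lambda r_k^{-1}\ge C_A|x_{j,\ell}-x_{k,m}|_A$, one has $\widetilde P\subset\mu Q$ with $\mu:=2C_A\lambda r_j/r_k$ and $|\mu Q|\asymp|\widetilde P|$. Hence the $A_p$-dimension of $W$ gives $\|A_QA_{\widetilde P}^{-1}\|\lesssim(\lambda r_j/r_k)^{d/p}$ with no volume loss. With $\lambda^{d/p}$ instead, you would lose the factor $(r_j/r_k)^{d/p}$, which is genuinely needed; concentric $Q\subset P$ with the weight above is a counterexample.

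The second factor is $\lesssim\lambda^{\widetilde d/p'}$. This follows from $|A_P^{-1}\vz|\lesssim|\widetilde A_P\vz|$, $\|\widetilde A_{\widetilde P}A_{\widetilde P}\|\lesssim 1$, and the $A_{p'}$-dimension of $\widetilde W$ applied to the concentric pair $P\subset\lambda P$.

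Multiplying the two factors gives:
\begin{itemize}
\item $(r_j/r_k)^{d/p}(1+r_k|x_{j,\ell}-x_{k,m}|_A)^{\Delta_p(W)}$ if $r_j\ge r_k$;
\item $(r_k/r_j)^{\sigma_p(W)}(1+r_j|x_{j,\ell}-x_{k,m}|_A)^{\Delta_p(W)}$ if $r_j<r_k$.
\end{itemize}
This is \eqref{eq:AQ-dimension}.
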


\begin{proof}
We only indicate the adaptation, since the argument is the same as the proof of \cite[Lemma~2.29]{BuHytoYang:2024a}, with Euclidean cubes replaced by anisotropic balls and with the homogeneous dimension $\nu$ replacing the Euclidean dimension.  The reducing-operator equivalences give, for nested anisotropic balls $B\subset \lambda S$, the estimate
\[
   \|A_BA_S^{-1}\|\lesssim \lambda^{d/p},
\]
while, for $1<p<\infty$, applying the same argument to the dual weight $\widetilde W=W^{-1/(p-1)}$ gives the reverse scale estimate with exponent $\widetilde d/p'$.  For $0<p\leq 1$ there is no dual contribution, in accordance with the definition of $\sigma_p(W)$.

Now let $Q=Q(j,\ell)$ and $P=Q(k,m)$. Choose an anisotropic ball $S$ such that $Q\cup P\subset S$ and
\[
 \operatorname{rad}(S)
 \lesssim r_j^{-1}+r_k^{-1}+|x_{j,\ell}-x_{k,m}|_A .
\]
Then
\[
 \|A_QA_P^{-1}\|
 \leq \|A_QA_S^{-1}\|\,\|A_SA_P^{-1}\|.
\]
Applying the nested estimates to $Q\subset S$ and $P\subset S$ yields
\[
 \|A_QA_P^{-1}\|
 \lesssim
 \left(\frac{\operatorname{rad}(S)}{r_j^{-1}}\right)^{d/p}
 \left(\frac{\operatorname{rad}(S)}{r_k^{-1}}\right)^{\sigma_p(W)} .
\]
Finally,
\[
 \operatorname{rad}(S)
 \lesssim
 \max\{r_j^{-1},r_k^{-1}\}
 \left(1+\min\{r_j,r_k\}|x_{j,\ell}-x_{k,m}|_A\right),
\]
and this gives \eqref{eq:AQ-dimension}.
\end{proof}

\begin{remark}\label{rem:dimension-almost-diagonal}
Lemma \ref{le:dou-dimension} may be used in place of Lemma \ref{le:dou} whenever the anisotropic dimension data are available.  In the definition of the matrix-weighted almost diagonal class below, this amounts to replacing the doubling loss $K=\frac{\beta}{p}$
 by the sharper losses
\[
   K_p(W)=\max\{d/p,\sigma_p(W)\},
   \qquad \Delta_p(W)=d/p+\sigma_p(W).
\]
Thus the spatial decay exponent $J+\beta/p$ may be replaced by
$J+\Delta_p(W)$, and the scale-loss parameter $K$ may be replaced by
$K_p(W)$.  The original formulation is retained below because it only requires the elementary doubling hypothesis.
\end{remark}

We can now define the class of almost diagonal matrices adapted to the vector-valued sequence space $m^{s}_{{p},q}(W)$. According to Remark \ref{rem:doub}, the definition in particular applies to the setup where the matrix weight is in $\mathbf{A}_p(\cB_A)$.  
\begin{definition}\label{def:mad}
Let $W$ be a doubling matrix weight of order $p > 0$ with doubling exponent $\beta$ as specified in Definition \ref{def:doub}. Put $K:=\frac{\beta}{p}$. A matrix $\mathbf{B}:=\{b_{(j,\ell)(k,m)}\}_{j,m,k,\ell\in\bZ^d}$
is called almost diagonal on $m^{s}_{{p},q}(W)$ if
there exist $J>\frac{\nu}{\min\{1,p,q\}}$, $M>\max\{2J,|s|+\nu/2\}$,  and $C>0$ such that
\begin{align}
    |b_{(j,\ell)(k,m)}|&\leq  C\min\bigg\{\bigg(\frac{r_j}{r_k}\bigg)^{M+K},\bigg(\frac{r_k}{r_j}\bigg)^{M+K}\bigg\}(1+\min(r_k,r_j)|x_{k,m}-x_{j,\ell}|_A)^{-J-\frac{\beta}{p}}\nonumber\\
&\hspace{2cm}\times (1+\max(r_k,r_j)^{-1}|\xi_k-\xi_j|_A)^{-J},\label{eq:mad_symm}
\end{align} 
with $r_k$, $\xi_k$, and $x_{k,n}$ defined in Eqs.\ \eqref{eq:rk} and \eqref{eq:xk}.
We denote the set of almost diagonal matrices on $m^{s}_{{p},q}$(W) by $\textbf{ad}_{{p},q}^{s}(W)$.

\end{definition}
The following result provides a fundamental boundedness result for matrices in $\textbf{ad}_{{p},q}^{s}$ on the vector-valued sequence space $m^{s}_{{p},q}(W)$. 
\begin{proposition}\label{prop:di}
Let $s\in \bR$, $0< p<\infty$, $0<q< \infty$, and let $W\in\mathbf{A}_p(\cB_A)$
with order $p$ doubling exponent $\beta$ as specified in Definition \ref{def:doub}. Let $\{A_Q\}_{Q\in\cD}$ be a family of reducing operators associated with $W$. Suppose $\mathbf{B}:=\{b_{(j,m)(k,n)}\}_{j,m,k,\ell\in\bZ^d}$ is almost diagonal with $\mathbf{B}\in \textbf{ad}^{s}_{p,q}(W)$. Then $\mathbf{B}$ is bounded on $m^{s}_{p,q}(W)$.
\end{proposition}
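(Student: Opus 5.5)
The plan is to reduce the statement to the scalar result, Proposition \ref{johnlennon}, using the reducing operators. By Lemma \ref{le:eqi} and the identity \eqref{eq:connect}, for a finite sequence $\vs$ we have
$$\|\mathbf{B}\vs\|_{m^s_{p,q}(W)}\asymp \|\{|A_{Q(j,\ell)}(\mathbf{B}\vs)_{j,\ell}|\}\|_{m^s_{p,q}}.$$
So it suffices to dominate the scalar sequence $u_{j,\ell}:=|A_{Q(j,\ell)}(\mathbf{B}\vs)_{j,\ell}|$ by the image of $t_{k,m}:=|A_{Q(k,m)}\vs_{k,m}|$ under a scalar almost diagonal matrix. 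The passage from finite sequences to the whole space will then follow by density and monotone convergence, as in the scalar theory.

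First, for each entry I insert the reducing operator of the column index and apply the elementary matrix estimate:
$$u_{j,\ell}\le \sum_{k,m}|b_{(j,\ell)(k,m)}|\,\|A_{Q(j,\ell)}A_{Q(k,m)}^{-1}\|\,t_{k,m}.$$
Since $W\in\mathbf{A}_p(\cB_A)$ is doubling of order $p$ (Remark \ref{rem:doub}), Lemma \ref{le:dou} gives the strong doubling bound \eqref{eq:AQ}. The norm $\|A_{Q(j,\ell)}A_{Q(k,m)}^{-1}\|$ is therefore at most a constant times
$$\max\{(r_j/r_k)^{\nu/p},(r_k/r_j)^{(\beta-\nu)/p}\}\,(1+\min\{r_j,r_k\}|x_{j,\ell}-x_{k,m}|_A)^{\beta/p}.$$
Both scale exponents are at most $K=\beta/p$, because $\beta\ge\nu$.

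Next, I multiply this bound against \eqref{eq:mad_symm}. The spatial factor $(1+\min(r_k,r_j)|\cdot|_A)^{\beta/p}$ exactly cancels the extra decay $\beta/p$, leaving exponent $-J$. The scale factor is at most $\max\{r_j/r_k,r_k/r_j\}^{K}$, which is absorbed by $\min\{(r_j/r_k)^{M+K},(r_k/r_j)^{M+K}\}$ and leaves $\min\{(r_j/r_k)^{M},(r_k/r_j)^{M}\}$. The frequency factor $(1+\max(r_k,r_j)^{-1}|\xi_k-\xi_j|_A)^{-J}$ is untouched. The resulting nonnegative matrix $\tilde b_{(j,\ell)(k,m)}$ therefore satisfies the symmetric condition \eqref{eq:ad_symm} with $J>\nu/\min\{1,p,q\}$ and $M>\max\{2J,|s|+\nu/2\}\ge\min\{2J,|s|+\nu/2\}$. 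By Remark \ref{rem:ad} it lies in $\mathrm{ad}^s_{p,q}$.

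Finally, since $u\le\tilde{\mathbf{B}}t$ entrywise and the $m^s_{p,q}$ quasi-norm is monotone in $|\cdot|$, Proposition \ref{johnlennon} gives $\|u\|_{m^s_{p,q}}\lesssim\|t\|_{m^s_{p,q}}$. Applying \eqref{eq:connect} and Lemma \ref{le:eqi} again yields $\|\mathbf{B}\vs\|_{m^s_{p,q}(W)}\lesssim\|\vs\|_{m^s_{p,q}(W)}$.

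The main obstacle I expect is verifying that the product kernel really meets the requirements of Definition \ref{doitmad} through Remark \ref{rem:ad}. The $\delta$-margins there must be produced from the strict inequalities on $J$ and $M$, and the factor $c^\delta_{jk}(J)$ must be recovered from the frequency decay combined with the surplus scale decay; this is where $M>2J$ is used. If Remark \ref{rem:ad} turns out to be insufficient as stated, I would check Definition \ref{doitmad} directly. In that check I would choose $\delta$ small and split the scale factor $\min\{(r_j/r_k)^M,(r_k/r_j)^M\}$ into the pieces required by $\omega^s$ and $c^\delta_{jk}$, separately for the cases $r_j\le r_k$ and $r_k<r_j$.
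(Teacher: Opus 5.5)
Your proposal is correct relative to the tools the paper provides, and it is essentially the paper's own proof. You pass to the scalar sequences $|A_{Q(j,\ell)}(\mathbf{B}\vs)_{j,\ell}|$ and $|A_{Q(k,m)}\vs_{k,m}|$ via Lemma \ref{le:eqi} and \eqref{eq:connect}, and bound the entries by $|b_{(j,\ell)(k,m)}|\,\|A_{Q(j,\ell)}A_{Q(k,m)}^{-1}\|$. You then absorb the strong-doubling loss from Lemma \ref{le:dou} into the surplus exponents $K$ and $\beta/p$ of \eqref{eq:mad_symm}, and finish with Remark \ref{rem:ad} and Proposition \ref{johnlennon}; your explicit cancellation and monotonicity remarks just spell out steps the paper leaves implicit.

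The obstacle you anticipate is real, but it is shared with the paper rather than a flaw in your route. If you check Definition \ref{doitmad} directly, the case $r_j\le r_k$ forces $M\ge 2J_0-s-\nu/2$, up to $\delta$-margins, where $J_0=\nu/\min\{1,p,q\}$. So the threshold $M>\max\{2J,|s|+\nu/2\}$ is only guaranteed to close the argument when $s\ge-\nu/2$, and a threshold such as $M>2J+|s|$ is what actually suffices in general.
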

\begin{proof}
Let $\vs\in m^{s}_{p,q}(W)$ be a finite sequence and put $\vt=\mathbf{B}\vs$, i.e.,
$$\vt_{j,\ell}:=\sum_{(k,m)\in\bZ^d\times\bZ^d} b_{(j,\ell)(k,m)}\vs_{k,m},\qquad (j,\ell)\in \bZ^d\times\bZ^d.$$
There are no convergence issues due to the fact that $\vs$ is finite.
Now we define an associated scalar sequence $t=(t_{j,k})_{j,k\in\bZ^d}$ by letting 
$t_{j,\ell}=|A_{Q(j,\ell)}\vt_{j,\ell}|$. Then we notice that
\begin{align*}
    t_{j,\ell}&=|A_{Q(j,\ell)}\vt_{j,\ell}|\\
    &=\bigg|A_{Q(j,\ell)}\sum_{(k,m)\in\bZ^d\times\bZ^d} b_{(j,\ell)(k,m)}\vs_{k,m}\bigg|\\
    &\leq \sum_{(k,m)\in\bZ^d\times\bZ^d} |b_{(j,\ell)(k,m)}|\cdot|A_{Q(j,\ell)}\vs_{k,m}|\\
    &\leq  \sum_{(k,m)\in\bZ^d\times\bZ^d} |b_{(j,\ell)(k,m)}|\cdot\|A_{Q(j,\ell)}A_{Q(k,m)}^{-1}\||A_{Q(k,m)}\vs_{k,m}|\\
    &=\sum_{(k,m)\in\bZ^d\times\bZ^d} \gamma_{(j,\ell)(k,m)} s_{k,m}, 
\end{align*}
with $s_{k,m}:=|A_{Q(k,m)}\vs_{k,m}|$ and $\gamma_{(j,\ell)(k,m)}:=|b_{(j,\ell)(k,m)}|\|A_{Q(j,\ell)}A_{Q(k,m)}^{-1}\|$. Hence, using the observation in Lemma \ref{le:eqi}, we have
$$\|\{t_{j,m}\}_{j,m}\|_{m^{s}_{p,q}}=\|\{\vt_{j,m}\}_{(j,m)}\|_{m^{s}_{p,q}(\{A_Q\})}\asymp \|\{\vt_{j,m}\}_{j,m}\|_{m^{s}_{p,q}(W)},$$
and 
$$\|\{s_{j,m}\}_{j,m}\|_{m^{s}_{p,q}}=\|\{\vs_{j,m}\}_{j,m}\|_{m^{s}_{p,q}(\{A_Q\})}\asymp \|\{\vs_{j,m}\}_{j,m}\|_{m^{s}_{p,q}(W)}.$$
 Therefore, to prove the desired boundedness result, it suffices to verify that
$\Gamma:=(\gamma_{(j,m)(k,\ell)})\in\mathrm{ad}_{p,q}^{s}$ since, in the scalar setting, an almost diagonal matrix for $m^{s}_{p,q}$ will map $m^{s}_{p,q}$ boundedly into $m^{s}_{p,q}$ according to Proposition \ref{johnlennon}. We notice that the estimate by Lemma \ref{le:dou}, and the almost diagonal assumption on $\mathbf{B}$ given by \eqref{eq:mad_symm}, ensure that
there exist $J>\frac{\nu}{\min\{1,p,q\}}$ and $M>\max\{2J,|s|+\nu/2\}$ such that
\begin{align*}
    |\gamma_{(j,m)(k,n)}|&\leq  C\min\bigg\{\bigg(\frac{r_j}{r_k}\bigg)^{M},\bigg(\frac{r_k}{r_j}\bigg)^{M}\bigg\}(1+\min(r_k,r_j)|x_{k,n}-x_{j,m}|_A)^{-J}\nonumber\\
&\hspace{2cm}\times (1+\max(r_k,r_j)^{-1}|\xi_k-\xi_j|_A)^{-J},
\end{align*}   
and, as noted in Remark \ref{rem:ad}, this implies that $\Gamma\in\mathrm{ad}_{p,q}^{s}$. 
\end{proof}

We now turn to a first useful application of Proposition \ref{prop:di}: the study of ``change of frame'' operators. As we will see in Corollaries  \ref{cor:recon} and \ref{cor:coeff} below, we can use the ``change of frame'' operators to extend Propositions \ref{prop:coef} and \ref{prop:recon} to cover much more general expansion systems.

 Let  $\kappa>0$ be the constant defined in Eq.\ \eqref{eq:cases} and let
$\{\psi_{k,n}\}_{k,n\in \bZ^d}$ be the tight frame defined in
\eqref{eq:ltf} for the given decomposition $\cC$. It can easily be verified that for any fixed $N,P,L>0$, $\psi_{k,n}$
satisfies the following decay estimates in the spatial and frequency domains,
\begin{align}
&|\psi_{k,m}(x)|\le Cr_k^{\frac{\nu}{2}}(1+r_k|x_{k,m}-x|_A)^{-2N}\label{one},\\
&|\hat{\psi}_{k,m}(\xi)|\le C
r_k^{-\frac{\nu}{2}}(1+r_k^{-1}|\xi_{k}-\xi|_A)^{-2L-2\kappa P},\label{three}
\end{align}
where $C$ is independent of $k$ and $m$, and as before,
\begin{equation}\label{dotodotodoto}
x_{k,m}=\delta_{r_k^{-1}}m,\,\, k,m\in\bZ^d,
\end{equation}
with $r_k$  defined in \eqref{eq:rk}. Let
$\{\eta_{k,n}\}_{k,n\in \bZ^d}\subset L^2(\R^d)$ be another system with similar decay properties,
\begin{align}
&|\eta_{j,m}(x)|\le Cr_j^{\frac{\nu}{2}}(1+r_j|x_{j,m}-x|_A)^{-2N}\label{two},\\
&|\hat{\eta}_{j,m}(\xi)|\le C
r_j^{-\frac{\nu}{2}}(1+r_j^{-1}|\xi_{j}-\xi|_A)^{-2L-2\kappa{P}}.\label{four}
\end{align}

The following lemma was proved in \cite{NielRasm:2012a}.
\begin{lemma}\label{bubbleboy}%
 Choose parameters $N,P,L>0$ such that $2N>\nu$ and $2L+2\kappa\frac{P-\nu}2>\nu$. If both systems $\{\eta_{k,n}\}_{k,n\in\bZ^d}$ and $\{\psi_{j,m}\}_{j,m\in\bZ^d}$ satisfy \eqref{two} and \eqref{four}, then we have
\begin{align*}
|\langle\eta_{k,n},\psi_{j,m}\rangle|
\le& C
\min\bigg(\frac{r_k}{r_j},\frac{r_j}{r_k}\bigg)^{P}(1+\max(r_k,r_j)^{-1}|\xi_k-\xi_j|_A)^{-L}\\
&\phantom{C} \times(1+\min(r_k,r_j)|x_{k,n}-x_{j,m}|_A)^{-N}.
\end{align*}
\end{lemma}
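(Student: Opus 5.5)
The plan is to write the inner product on the Fourier side, $\langle\eta_{k,n},\psi_{j,m}\rangle=\langle\hat\eta_{k,n},\hat\psi_{j,m}\rangle$, and to estimate the same quantity twice. In space we will use the decay \eqref{one}/\eqref{two}, which should give the spatial factor $(1+\min(r_k,r_j)|x_{k,n}-x_{j,m}|_A)^{-N}$ together with a scale factor. In frequency we will use \eqref{three}/\eqref{four}, which should give the frequency separation factor $(1+\max(r_k,r_j)^{-1}|\xi_k-\xi_j|_A)^{-L}$ together with a scale gain $\min(r_k/r_j,r_j/r_k)^{P}$. Since $|\langle\eta,\psi\rangle|\le \min(X,Y)\le X^{\theta}Y^{1-\theta}$, we then combine the two bounds by taking geometric means, or more simply by splitting the decay exponents $2N$ and $2L+2\kappa P$ in half. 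This is why the hypotheses carry the doubled exponents $2N$ and $2L+2\kappa P$.

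\emph{Spatial estimate.} Assume without loss of generality that $r_j\le r_k$. We need a two-scale convolution bound for anisotropic bumps,
\[
\int r_k^{\nu/2}(1+r_k|x-x_{k,n}|_A)^{-2N}\,r_j^{\nu/2}(1+r_j|x-x_{j,m}|_A)^{-2N}\,dx\lesssim \Big(\frac{r_j}{r_k}\Big)^{\nu/2}(1+r_j|x_{k,n}-x_{j,m}|_A)^{-2N},
\]
valid for $2N>\nu$. To prove it, split the domain according to which center $x$ is closer to in $|\cdot|_A$, use the quasi-triangle inequality \eqref{eq:Qtriangle}, and integrate with \eqref{eq:integral} after the substitution $x\mapsto\delta_{r_k^{-1}}x$. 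This substitution has Jacobian $r_k^{-\nu}$ and turns the ball geometry into the unit scale.

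\emph{Frequency estimate.} Similarly,
\[
\int r_k^{-\nu/2}(1+r_k^{-1}|\xi-\xi_k|_A)^{-2L-2\kappa P}\,r_j^{-\nu/2}(1+r_j^{-1}|\xi-\xi_j|_A)^{-2L-2\kappa P}\,d\xi .
\]
With $r_j\le r_k$, integrating out the sharper bump (the one at scale $r_j$) gives a factor $(r_j/r_k)^{\nu/2}$ times $(1+r_k^{-1}|\xi_k-\xi_j|_A)^{-2L-2\kappa P}$. This handles the $\xi$-separation factor with $\max(r_k,r_j)$, but it does not by itself produce a power $(r_j/r_k)^{P}$ for arbitrary $P$.

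\emph{Main obstacle: the scale gain.} The extra factor must come from the moderateness of $h$ and the strengthened assumption \eqref{eq:cases}. I will argue as follows. Since $r_k=h(\xi_k)$ and $h^{1+\kappa}$ is moderate, \eqref{eq:cases} shows that when $r_j\ll r_k$ the centers must be far apart, in the sense $|\xi_k-\xi_j|_A\gtrsim r_k(r_k/r_j)^{\kappa}$, or else $r_k\lesssim r_j$. Indeed, if $|\xi_k-\xi_j|_A\le a\,h(\xi_j)$, then $h(\xi_k)\le R_1 a\,h(\xi_j)$. Applying this with the power $h^{1+\kappa}$ yields $1+r_k^{-1}|\xi_k-\xi_j|_A\gtrsim (r_k/r_j)^{\kappa}$ in the relevant regime, or an analogous bound using $\max(r_j,r_k)$. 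Consequently, part of the frequency decay, namely $(1+r_k^{-1}|\xi_k-\xi_j|_A)^{-2\kappa P}$, is dominated by $(r_j/r_k)^{2\kappa^2 P}$ up to the exact bookkeeping. This is the step where I expect the constants to be delicate. I will try to mimic \cite{NielRasm:2012a}: split the exponent $2L+2\kappa P$ into $2L$, which yields the separation factor, and $2\kappa P$, which converts into a scale power. The condition $2L+2\kappa\frac{P-\nu}{2}>\nu$ should be exactly what ensures both integrability and the conversion after subtracting the $\nu/2$ losses.

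Finally, we combine the two bounds via $\min(X,Y)\le X^{1/2}Y^{1/2}$. This halves the exponents to $N$ and $L$ and gives the scale power $P$, which yields the stated estimate.
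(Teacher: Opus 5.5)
\emph{Note:} the paper does not prove this lemma; it quotes it from \cite{NielRasm:2012a}, so I am judging your argument on its own terms.

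\textbf{What works.} Your architecture is the standard one and its first steps are right:
\begin{itemize}
\item Plancherel.
\item The two-scale spatial bound $(r_j/r_k)^{\nu/2}(1+r_j|x_{k,n}-x_{j,m}|_A)^{-2N}$ (for $r_j\le r_k$).
\item The frequency bound $(r_j/r_k)^{\nu/2}(1+r_k^{-1}|\xi_k-\xi_j|_A)^{-2L-2\kappa P}$.
\item The separation inequality $1+\max(r_k,r_j)^{-1}|\xi_k-\xi_j|_A\gtrsim \max(r_k/r_j,r_j/r_k)^{\kappa}$, which follows from the moderateness of $h^{1+\kappa}$ and $h\gtrsim 1$.
\end{itemize}

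\textbf{The gap: the scale-gain step.} The separation inequality converts each unit of frequency decay into only $\kappa$ units of scale decay. So $(1+r_k^{-1}|\xi_k-\xi_j|_A)^{-2\kappa P}$ is worth $(r_j/r_k)^{2\kappa^2P}$, as you computed yourself. After the geometric mean you get $\min(r_k/r_j,r_j/r_k)^{\nu/2+\kappa^2P}$, not $\min(r_k/r_j,r_j/r_k)^{P}$. Your closing sentence that halving ``gives the scale power $P$'' does not follow. The hypothesis $2L+2\kappa\frac{P-\nu}{2}>\nu$ only guarantees integrability (it implies $2L+2\kappa P>\nu$); it cannot change the conversion rate. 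Your argument does close when $\kappa\ge 1$, since then $\kappa^2P\ge P$.

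\textbf{No finer split can repair this.} With $\kappa$ as in \eqref{eq:cases}, the displayed hypotheses do not imply the displayed conclusion for small $\kappa$. Counterexample:
\begin{itemize}
\item Take $d=1$, $A=1$ (so $\nu=1$) and $h(\xi)=(1+|\xi|)^{2/3}$, which satisfies \eqref{eq:cases} with $\kappa=1/2$.
\item Take $N=L=1$ and $P=10$. Then $2N>\nu$ and $2L+2\kappa\frac{P-\nu}{2}=13/2>\nu$.
\item Let $\psi_{j,0}$ be the tight-frame element with $\xi_j$ near $0$. Then $r_j\asymp 1$, $\hat\psi_{j,0}\ge 0$ and $\int\hat\psi_{j,0}\asymp 1$.
\item Set $\hat\eta_{k,n}(\xi):=r_k^{-1/2}\big(1+r_k^{-2}(\xi-\xi_k)^2\big)^{-(2L+2\kappa P)/2}e^{-ix_{k,n}\xi}$. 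Then $\eta_{k,n}$ is $r_k^{1/2}$ times a modulated, translated and dilated Bessel kernel, so \eqref{two} and \eqref{four} hold.
\item Since $r_k^{-1}|\xi_k|\asymp r_k^{1/2}$ and $x_{k,0}=x_{j,0}=0$, positivity gives $\langle\eta_{k,0},\psi_{j,0}\rangle\gtrsim r_k^{-1/2-L-P/2}=r_k^{-13/2}$.
\item The claimed right-hand side is $\asymp r_k^{-P-L/2}=r_k^{-21/2}$, which fails as $r_k\to\infty$.
\end{itemize}

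\textbf{What your argument actually proves.} It gives the lemma with the extra frequency decay $2\kappa P$ replaced by an exponent of order $(2P-\nu)/\kappa$; equivalently, it gives the stated hypotheses with the weaker conclusion $\min(\cdot)^{\nu/2+\kappa^2P}$. Check how $\kappa$ is normalised in \cite{NielRasm:2012a} and state the hypothesis accordingly.
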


The lemma can be applied to obtain the following result yielding a reconstruction bound for any system of "molecules" with the same general structure as the frame $\{\phi_{k,n}\}_{k,n\in \bZ^d}$. The result in Corollary \ref{cor:recon} can be used to introduce a natural reconstruction/synthesis operator $T$ for the system of molecules, where the comment in Remark \ref{rem:T} applies.

\begin{corollary}\label{cor:recon}
 Let  $\kappa>0$ be the constant defined in Eq.\ \eqref{eq:cases}. Suppose  $s\in \bR$, $0< p<\infty$, and $0<q< \infty$, and let $W\in\mathbf{A}_p(\cB_A)$
with order $p$ doubling exponent $\beta$ as specified in Definition \ref{def:doub}.
Put $K:=\frac{\beta}{p}$ and choose parameters $N,P,L>0$ such that 
$2N>\nu$ and $2L+2\kappa\frac{P-\nu}2>\nu$, and additionally
\begin{align*}
     \min\{L,N\}>\frac{\nu}{\min\{1,p,q\}}+\frac{\beta}{p},\qquad P>K+\max\bigg\{
     \frac{2\nu}{\min\{1,p,q\}},|s|+\frac{\nu}{2}\bigg\}.
\end{align*} 
  If the system $\{\eta_{j,m}\}_{j,m\in\bZ^d}\subset L_2(\bR^d)$ satisfies \eqref{two} and \eqref{four} with parameters $N,P,L$ as specified, then there exists a constant $C$ such that for any finite vector-valued coefficient sequence  $\vs:=\{\vc_{j,\ell}\}_{(j,\ell)\in F}$, $F\subset \bZ^d\times\bZ^d$,
  \begin{equation}
      \bigg\|\sum_{(j,\ell)\in F}\vc_{j,\ell}\eta_{j,\ell}\bigg\|_{M_{p,q}^{s}(W)}\leq C \|\{\vc_{j,\ell}\}\|_{m_{p,q}^{s}(W)}.
  \end{equation}
\end{corollary}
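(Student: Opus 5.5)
The plan is to factor the synthesis through the tight frame $\{\psi_{k,n}\}$ and a change-of-frame matrix. Since $\{\psi_{k,n}\}$ is a tight frame for $L^2(\bR^d)$ and each $\eta_{j,\ell}\in L^2(\bR^d)$, we have $\eta_{j,\ell}=\sum_{k,n}\langle \eta_{j,\ell},\psi_{k,n}\rangle\psi_{k,n}$. Hence, for the finite sequence $\vs=\{\vc_{j,\ell}\}_{(j,\ell)\in F}$,
\[
\sum_{(j,\ell)\in F}\vc_{j,\ell}\eta_{j,\ell}=\sum_{k,n}\vt_{k,n}\psi_{k,n},\qquad \vt_{k,n}:=\sum_{(j,\ell)\in F} b_{(k,n)(j,\ell)}\vc_{j,\ell},
\]
where $b_{(k,n)(j,\ell)}:=\langle \eta_{j,\ell},\psi_{k,n}\rangle$. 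Writing $\mathbf{B}=\{b_{(k,n)(j,\ell)}\}$, we have $\vt=\mathbf{B}\vs$. The goal is then the chain
\[
\Big\|\sum\vc_{j,\ell}\eta_{j,\ell}\Big\|_{M^s_{p,q}(W)}\lesssim \|\mathbf{B}\vs\|_{m^s_{p,q}(W)}\lesssim\|\vs\|_{m^s_{p,q}(W)}.
\]
The first inequality comes from Proposition \ref{prop:recon}, and the second from Proposition \ref{prop:di} once we know that $\mathbf{B}\in\textbf{ad}^s_{p,q}(W)$.

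To verify almost diagonality, I would apply Lemma \ref{bubbleboy}. Both $\{\eta_{j,\ell}\}$ and $\{\psi_{k,n}\}$ satisfy \eqref{two}--\eqref{four} with the given $N,P,L$, and the hypotheses $2N>\nu$ and $2L+2\kappa\frac{P-\nu}{2}>\nu$ are assumed. The lemma therefore gives
\[
|b_{(k,n)(j,\ell)}|\le C\min\Big(\tfrac{r_k}{r_j},\tfrac{r_j}{r_k}\Big)^P(1+\max(r_k,r_j)^{-1}|\xi_k-\xi_j|_A)^{-L}(1+\min(r_k,r_j)|x_{k,n}-x_{j,\ell}|_A)^{-N}.
\]
To match \eqref{eq:mad_symm}, choose $J$ with $\frac{\nu}{\min\{1,p,q\}}<J<\min\{L,N\}-\frac{\beta}{p}$, which is possible by the first parameter assumption. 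Then $N\ge J+\beta/p$ and $L\ge J$, and since each base is at least $1$, the spatial and frequency factors are dominated by the required powers. For the scale factor we need $P\ge M+K$ for some $M>\max\{2J,|s|+\nu/2\}$. This forces $J$ to be taken slightly above $\nu/\min\{1,p,q\}$, so that $2J$ is close to $2\nu/\min\{1,p,q\}$. The assumption $P>K+\max\{2\nu/\min\{1,p,q\},|s|+\nu/2\}$ leaves exactly the room to pick $J$ close enough and then $M:=P-K$.

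The main obstacle is justifying the first step, namely passing the infinite expansion of each $\eta_{j,\ell}$ through the $M^s_{p,q}(W)$ norm. Proposition \ref{prop:recon} is only stated for finite sequences, while $\vt$ is infinite. I would handle this by truncation: apply Proposition \ref{prop:recon} to finite sections $\vt\mathbf{1}_{G}$, show that these are Cauchy in $m^s_{p,q}(W)$ (using $q<\infty$ and finiteness of $\|\vt\|_{m^s_{p,q}(W)}$, which holds by Proposition \ref{prop:di}), and identify the limit with $\sum\vc_{j,\ell}\eta_{j,\ell}$ in $\bigoplus\ddS$ via the $L^2$ convergence. Completeness and the embedding from Proposition \ref{prop:complete} then pass the bound to the limit, in the spirit of Remark \ref{rem:T}.

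A second point is that $\mathbf{B}$ must be indexed with rows $(k,n)$ and columns $(j,\ell)$. Lemma \ref{bubbleboy} and \eqref{eq:mad_symm} are symmetric in the two index pairs, so this orientation causes no difficulty.
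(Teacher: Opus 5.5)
Your proposal is correct and is essentially the paper's own argument: expand $\sum\vc_{j,\ell}\eta_{j,\ell}$ in the tight frame $\{\psi_{k,n}\}$, obtain almost diagonality of the change-of-frame matrix from Lemma~\ref{bubbleboy}, and combine Proposition~\ref{prop:di} with Proposition~\ref{prop:recon}. Your explicit choice of $J$ and $M:=P-K$, and your truncation/completeness argument for applying Proposition~\ref{prop:recon} to the infinite sequence $\mathbf{B}\vs$, fill in details the paper leaves implicit; the paper uses that step tacitly, cf.\ Remark~\ref{rem:T}.
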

\begin{proof}
We expand $\vf:=\sum_{(j,\ell)\in F}\vc_{j,\ell}\eta_{j,\ell}$ in the canonical system $\{\psi_{j,\ell}\}$. This yields
$$\vf=\sum_{(k,m)\in \bZ^d\times \bZ^d} (\mathbf{B}\vs)_{(j,m)}\psi_{k,m},$$
with $$\mathbf{B}=(\langle \eta_{j,\ell},\psi_{k,n}\rangle)_{(j,\ell)(k,n)}.$$
By Proposition \ref{prop:di}, $\|\mathbf{B}\vs\|_{m^{s}_{p,q}(W)}\leq C_1\|\vs\|_{m^{s}_{p,q}(W)}$, and it follows by Proposition \ref{prop:recon} that
$$\|\vf\|_{M^{s}_{p,q}(W)}\leq C_2\|\mathbf{B}\vs\|_{m^{s}_{p,q}(W)}\leq C_1C_2\|\vs\|_{m^{s}_{p,q}(W)}.$$
\end{proof}

 Using a similar type of argument, we can also obtain an estimate for the analysis/coefficient operator for any system with the same general structure as the frame $\{\phi_{k,n}\}_{k,n\in \bZ^d}$.
\begin{corollary}\label{cor:coeff}
Let  $\kappa>0$ be the constant defined in Eq.\ \eqref{eq:cases}, $s\in \bR$, $0< p<\infty$, and $0<q< \infty$, and let $W\in\mathbf{A}_p(\cB_A)$
with order $p$ doubling exponent $\beta$ as specified in Definition \ref{def:doub}.
Put $K:=\frac{\beta}{p}$ and choose $N,P,L>0$ such that 
$2N>\nu$ and $2L+2\kappa\frac{P-\nu}2>\nu$, and additionally
\begin{align*}
     \min\{L,N\}>\frac{\nu}{\min\{1,p,q\}}+\frac{\beta}{p},\qquad P>K+\max\bigg\{
     \frac{2\nu}{\min\{1,p,q\}},|s|+\frac{\nu}{2}\bigg\}.
\end{align*} 
  If the system $\{\eta_{j,m}\}_{j,m\in\bZ^d}\subset L^2(\R^d)$ satisfies \eqref{two} and \eqref{four} with parameters $N,P,L$ as specified, then we have 
  for $\vf\in M_{p,q}^{s}(W)$,
  \begin{equation}
      \|\{\vc_{k,\ell}\}_{k,\ell}\|_{m_{p,q}^{s}(W)}\leq C \|\vf\|_{M_{p,q}^{s}(W)},
  \end{equation}
  with $\vc_{k,\ell}:=\langle \vf,\eta_{k,\ell}\rangle$.
\end{corollary}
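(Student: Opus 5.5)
The proof mirrors that of Corollary \ref{cor:recon}, with the roles of analysis and synthesis exchanged. We factor the coefficient map through the canonical tight frame $\{\psi_{k,n}\}$. Since $\{\psi_{k,n}\}$ is a tight frame for $L^2(\bR^d)$, every $\vf$ (first taken in the dense class $\bigoplus_{j=1}^N\dS$, cf.\ Remark \ref{rem:dense}) has the reproducing expansion $\vf=\sum_{k,n}\langle \vf,\psi_{k,n}\rangle\psi_{k,n}$, with convergence in $\bigoplus\mathcal{S}'$. Pairing with $\eta_{j,\ell}$ gives
\begin{equation*}
\vc_{j,\ell}=\langle \vf,\eta_{j,\ell}\rangle=\sum_{(k,n)}\langle \psi_{k,n},\eta_{j,\ell}\rangle\,\langle \vf,\psi_{k,n}\rangle=(\mathbf{B}\vd)_{(j,\ell)},
\end{equation*}
where $\vd_{k,n}:=\langle \vf,\psi_{k,n}\rangle$ and $\mathbf{B}:=(\langle \psi_{k,n},\eta_{j,\ell}\rangle)_{(j,\ell)(k,n)}$. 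The interchange of sum and pairing is legitimate because $\eta_{j,\ell}$ decays by \eqref{two} and \eqref{four}, and the expansion of a Schwartz function converges well against such functions.

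The key step is to show $\mathbf{B}\in\textbf{ad}^s_{p,q}(W)$. Lemma \ref{bubbleboy}, applied to the pair $\{\psi\}$, $\{\eta\}$ (both satisfy \eqref{two} and \eqref{four}, since $\psi$ satisfies \eqref{one} and \eqref{three}), bounds $|\langle\psi_{k,n},\eta_{j,\ell}\rangle|$ by
\begin{equation*}
\min(r_k/r_j,r_j/r_k)^P\,(1+\max(r_k,r_j)^{-1}|\xi_k-\xi_j|_A)^{-L}\,(1+\min(r_k,r_j)|x_{k,n}-x_{j,\ell}|_A)^{-N}.
\end{equation*}
We then compare exponents with \eqref{eq:mad_symm}. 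Choose $J$ with $\nu/\min\{1,p,q\}<J<\min\{L,N\}-\beta/p$, so that $N\ge J+\beta/p$ and $L\ge J$. For the scale exponent we need $P\ge M+K$ with $M>\max\{2J,|s|+\nu/2\}$. Since $P>K+\max\{2\nu/\min\{1,p,q\},|s|+\nu/2\}$, this works once $J$ is taken close enough to $\nu/\min\{1,p,q\}$, so that $2J$ stays below $P-K$. This bookkeeping is the only delicate point. If the stated hypothesis on $P$ is too tight to accommodate $M>2J$ literally, I would instead use the weaker requirement from the scalar class in Definition \ref{doitmad} and Remark \ref{rem:ad}.

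With $\mathbf{B}$ almost diagonal, Proposition \ref{prop:di} and then Proposition \ref{prop:coef} give
\begin{equation*}
\|\{\vc_{j,\ell}\}\|_{m^s_{p,q}(W)}=\|\mathbf{B}\vd\|_{m^s_{p,q}(W)}\lesssim\|\vd\|_{m^s_{p,q}(W)}\lesssim\|\vf\|_{M^s_{p,q}(W)}.
\end{equation*}
Proposition \ref{prop:di} is stated for finite sequences, while $\vd$ is infinite. I expect this extension to be the main technical obstacle. To handle it, apply the bound to truncations $\vd^{F}$ over finite sets $F$. Each coordinate $(\mathbf{B}\vd^F)_{(j,\ell)}$ converges to $\vc_{j,\ell}$, by absolute convergence of the series, which follows from the decay of $\mathbf{B}$ and the finiteness of $\|\vd\|_{m^s_{p,q}(W)}$. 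Fatou's lemma, applied to the quasi-norm written through reducing operators as in Lemma \ref{le:eqi}, then passes the uniform bound to the limit. Finally, a density argument extends the estimate from $\bigoplus\dS$ to all $\vf\in M^s_{p,q}(W)$, since $q<\infty$ (Remark \ref{rem:dense}). Alternatively, one can observe directly that $\vf=\sum_{k}\psi_k^2(D)\vf$ justifies the expansion for general $\vf$.
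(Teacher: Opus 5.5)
Your proposal is correct and follows the same route as the paper: you expand in the tight frame $\{\psi_{k,n}\}$, write $\vc=\mathbf{B}\vd$ with $\mathbf{B}$ the cross-Gramian $\langle\psi_{k,n},\eta_{j,\ell}\rangle$, check $\mathbf{B}\in\textbf{ad}^s_{p,q}(W)$ via Lemma~\ref{bubbleboy}, and conclude with Propositions~\ref{prop:di} and~\ref{prop:coef}. Your exponent bookkeeping works under the stated hypotheses (any $J$ strictly between $\nu/\min\{1,p,q\}$ and $\min\{\min\{L,N\}-\beta/p,(P-K)/2\}$ will do), so the fallback you mention is not needed, and your truncation/Fatou and density steps simply make explicit the convergence issues that the paper's short proof does not address.
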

\begin{proof}
 By Proposition \ref{prop:coef}, there exists $C_1$ such that for  $\vf\in M_{p,q}^{s}(W)$,
 $$ \|\{\vs_{k,\ell}\}_{k,\ell}\|_{m_{p,q}^{s}(W)}\leq C_1 \|\vf\|_{M_{p,q}^{s}(W)},$$
 with $\vs_{k,\ell}:=\langle \vf,\psi_{k,\ell}\rangle$. We also notice that 
 $$\vf=\sum_{(j,m)\in\bZ^d\times\bZ^d}\vs_{j,m}\psi_{j,m}.$$
 We use this representation of $\vf$ to calculate
$$\vc_{k,\ell}:=\langle \vf,\eta_{k,\ell}\rangle,$$
where we obtain $\vc=\mathbf{B}\vs$, with $$\mathbf{B}=(\langle \psi_{j,m},\eta_{k,\ell}\rangle)_{(j,m)(k,\ell)}.$$
 Hence, by Proposition \ref{prop:di},
 $$\|\{\vc_{k,\ell}\}_{k,\ell}\|_{m_{p,q}^{s}(W)}\leq C_2 \|\{\vs_{k,\ell}\}_{k,\ell}\|_{m_{p,q}^{s}(W)}\leq C_1C_2 \|\vf\|_{M_{p,q}^{s}(W)}.$$
\end{proof}

\section{Applications to Fourier multipliers and PDOs}\label{sec:5}
One key consequence of Definition \ref{def:mad} is that the almost-diagonal
class $\textbf{ad}_{{p},q}^{s}(W)$ is compatible with the corresponding
scalar almost-diagonal class, up to a loss dictated by the doubling exponent
of the matrix weight.  Proposition \ref{prop:di} therefore gives a convenient
general way to transfer scalar almost-diagonal estimates to the matrix-weighted
setting.  We illustrate this principle with a Fourier multiplier result and then proceed to obtain an estimate for pseudo-differential operators.

\subsection{Fourier multipliers}
Let $0<p<\infty$ and $0<q<\infty$, and suppose that
$W\in\mathbf{A}_p(\cB_A)$.  For a scalar symbol $m$ we let
$m(D)$ denote the Fourier multiplier acting componentwise on vector-valued
functions,
\[
    m(D)\vf:=\mathcal F^{-1}(m\widehat{\vf}),
\]
initially for $\vf\in \bigoplus_{j=1}^N\mathcal{S}(\bR^d)$.  The following
result treats symbols whose order is measured with respect to the scale
$r_k=h(\xi_k)$ of the covering.

\begin{proposition}\label{prop:fourier_multiplier}
Let $0<p<\infty$, $0<q<\infty$, and suppose that
$W\in\mathbf{A}_p(\cB_A)$.  Fix $b\in\bR$.  Assume that the symbol
$m$ satisfies, for every multi-index $\gamma\in\bN_0^d$,
\begin{equation}\label{eq:h_symbol_order_b}
\sup_{k\in\bZ^d}
 r_k^{-b}
 \left\|
 \partial^\gamma[m\circ T_k]
 \right\|_{L^\infty(B_A(0,\rho))}<\infty,
 \qquad T_k y=\delta_{r_k}y+\xi_k .
\end{equation}
Then $m(D)$ extends uniquely to a bounded operator
\begin{equation}\label{eq:bdd}
    m(D):M_{p,q}^{s+b}(W)\longrightarrow M_{p,q}^{s}(W).
\end{equation}
\end{proposition}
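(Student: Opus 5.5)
The plan is to work directly at the level of the continuous norm rather than through almost diagonal matrices; this avoids the frequency-decay bookkeeping. Fix $\vf\in\bigoplus_{j=1}^N\dS$ and recall $\phi_k^*=\sum_{j\in\tilde k}\phi_j$ from Remark \ref{rem:b}, with $\phi_k\phi_k^*=\phi_k$. Then
\[
\phi_k(D)m(D)\vf=\big(m\,\phi_k\big)(D)\,\phi_k^*(D)\vf,
\]
and $\phi_k^*(D)\vf\in E_{B}$ for a ball $B=B_A(\xi_k,c\,r_k)$ with $c$ uniform, by the moderateness of $h$ and the bounded overlap. The key step is to apply Proposition \ref{prop:main} to the multiplier $r_k^{-b}m\phi_k$. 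This requires the decay estimate \eqref{eq:e1}:
\[
\big|\mathcal F^{-1}(r_k^{-b}m\phi_k)(x)\big|\le K\,r_k^{\nu}(1+r_k|x|_A)^{-M},
\]
with $K$ independent of $k$ and $M$ as large as needed.

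To verify this, I change variables $\xi=T_k y$. This gives $\mathcal F^{-1}(m\phi_k)(x)=r_k^{\nu}e^{ix\cdot\xi_k}\,\mathcal F^{-1}\big[(m\circ T_k)(\phi_k\circ T_k)\big](\delta_{r_k}x)$, using $\det\delta_{r_k}=r_k^\nu$ and that $\delta_t$ is symmetric. The function $(m\circ T_k)(\phi_k\circ T_k)$ is supported in $B_A(0,\rho)$. By the hypothesis \eqref{eq:h_symbol_order_b}, $r_k^{-b}\partial^\gamma(m\circ T_k)$ is uniformly bounded there. The standard BAPU construction $\phi_k=g_k/\sum_j g_j$ gives $\phi_k\circ T_k=g/\sum_j g_j\circ T_k$. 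The derivatives of this are uniformly bounded, by \eqref{eq:esc} and the bounded overlap, since each $g_j\circ T_k=g\circ T_j^{-1}T_k$ with $T_j^{-1}T_k$ having uniformly bounded linear part.

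Leibniz' rule then gives uniform $C^n$ bounds for every $n$ on a fixed compact set. Integration by parts yields $|\mathcal F^{-1}[\cdots](z)|\le C_n(1+|z|)^{-n}$. Using \eqref{natasha}, $(1+|z|)^{-n}\lesssim(1+|z|_A)^{-M}$ for $n$ large (as $|z|\gtrsim|z|_A^{a_-}$ for $|z|_A\ge1$). With $z=\delta_{r_k}x$ and $|\delta_{r_k}x|_A=r_k|x|_A$, this gives \eqref{eq:e1}. I expect this uniformity of symbol bounds to be the main technical point, though routine.

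Given this, Proposition \ref{prop:main} yields
\[
\|\phi_k(D)m(D)\vf\|_{L^p(W)}\le C r_k^{b}\|\phi_k^*(D)\vf\|_{L^p(W)}\le C r_k^b\sum_{j\in\tilde k}\|\phi_j(D)\vf\|_{L^p(W)}.
\]
Multiplying by $r_k^s$, using $r_j\asymp r_k$ for $j\in\tilde k$ and $\#\tilde k\le M$, and taking $\ell_q$ norms as in the proof of Proposition \ref{prop:complete}(c), gives
\[
\|m(D)\vf\|_{M^s_{p,q}(W)}\lesssim\|\vf\|_{M^{s+b}_{p,q}(W)}.
\]
Since $q<\infty$, $\bigoplus\dS$ is dense by Remark \ref{rem:dense}, and completeness (Proposition \ref{prop:complete}(b)) gives the unique bounded extension \eqref{eq:bdd}.

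One caveat concerns $m(D)\vf$ for Schwartz $\vf$ when $m$ is merely smooth of polynomial growth. Here $m(D)\vf$ is a tempered distribution, since $m$ has polynomial growth by \eqref{eq:h_symbol_order_b} and \eqref{eq:growthh}, so the computation above is legitimate.
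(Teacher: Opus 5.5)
Your proof is correct, but it takes a genuinely different route from the paper. The paper works on the discrete side. It shows, by integration by parts in an oscillatory integral, that the matrix $\langle r_k^{-b}m(D)\psi_{k,\ell},\psi_{j,n}\rangle$ lies in $\textbf{ad}^{s}_{p,q}(W)$, and then concludes via Proposition~\ref{prop:coef}, Corollary~\ref{cor:recon} and Proposition~\ref{prop:di}. You stay on the continuous side. The factorization $\phi_k(D)m(D)\vf=(m\phi_k)(D)\phi_k^*(D)\vf$, combined with the rescaled decay bound for $\mathcal{F}^{-1}(r_k^{-b}m\phi_k)$, lets you apply the convolution lemma, Proposition~\ref{prop:main}, exactly as in the proof of Proposition~\ref{prop:complete}(c).

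Your route is more elementary. It avoids the scalar almost-diagonal theorem, the strong doubling estimate, and the extra hypotheses \eqref{eq:cases} on $h$ that the machinery of Section~\ref{sec:4} rests on. It also sidesteps the well-definedness issue for the synthesis operator raised in Remark~\ref{rem:T}.

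It gives a little more as well:
\begin{itemize}
\item The hypothesis makes $m$ smooth with polynomially bounded derivatives, so $m(D)$ acts on all of $\bigoplus_{j=1}^N\mathcal{S}'(\bR^d)$. Your estimate therefore holds directly for every $\vf\in M^{s+b}_{p,q}(W)$, including $q=\infty$, with no density argument.
\item The uniform bounds on $\partial^\gamma(\phi_k\circ T_k)$ already follow from property (iii) of Definition~\ref{def:bapu} by Fourier inversion. You do not need the specific BAPU construction.
\end{itemize}

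What the paper's route buys is a template that carries over to Proposition~\ref{prop:pdo_bdd}. For an $x$-dependent symbol the identity $\phi_k(D)\sigma(x,D)=\phi_k(D)\sigma(x,D)\phi_k^*(D)$ fails, because the $x$-dependence spreads frequencies. One then cannot reduce to a single band-limited piece, and the off-diagonal frequency interactions must be controlled by almost-diagonal estimates, which Proposition~\ref{prop:main} does not provide.
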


\begin{proof}
Let $\{\psi_{k,\ell}\}$ be the tight frame from \eqref{eq:ltf}.  By
Proposition \ref{prop:coef}, Corollary \ref{cor:recon}, and Proposition
\ref{prop:di}, it suffices to prove that the matrix
\begin{equation}\label{eq:bdda}
\mathbf B=
\left\{
 b_{(j,n)(k,\ell)}
\right\}_{j,n,k,\ell},
\qquad
 b_{(j,n)(k,\ell)}:=
 \left\langle r_k^{-b}m(D)\psi_{k,\ell},\psi_{j,n}\right\rangle,
\end{equation}
belongs to $\textbf{ad}_{p,q}^{s}(W)$.  Indeed, if
$\vc_{k,\ell}=\langle \vf,\psi_{k,\ell}\rangle$, then the coefficient
sequence of $m(D)\vf$ is obtained by applying $\mathbf B$ to the sequence
$\{r_k^b\vc_{k,\ell}\}_{k,\ell}$, whose $m_{p,q}^{s}(W)$ norm is precisely
controlled by the $M_{p,q}^{s+b}(W)$ norm of $\vf$.

The support properties of the frame imply that
$b_{(j,n)(k,\ell)}=0$ whenever $C_k\cap C_j=\emptyset$.  We may therefore
assume that $C_k\cap C_j\neq\emptyset$.  In that case, the moderateness of
the covering gives
\begin{equation}\label{eq:intersecting_scales}
    r_k\asymp r_j,
    \qquad
    1+\max(r_k,r_j)^{-1}|\xi_k-\xi_j|_A\asymp 1,
\end{equation}
with constants independent of $j$ and $k$.

Using Plancherel's identity and the change of variables
$\xi=T_k y=\delta_{r_k}y+\xi_k$, the entry in \eqref{eq:bdda} can be written,
up to a harmless unimodular factor, as
\[
 b_{(j,n)(k,\ell)}
 =c_{k,j}\int_{\bR^d} a_{k,j}(y)
       e^{i\Phi_{j,n,k,\ell}(y)}\,dy,
\]
where
\[
 a_{k,j}(y):=
 r_k^{-b}m(T_k y)\psi_k(T_k y)\psi_j(T_k y),
 \qquad |c_{k,j}|\asymp 1 .
\]
The functions $a_{k,j}$ are supported in a fixed compact set, uniformly for all intersecting pairs $C_k\cap C_j\neq\emptyset$.
Moreover, the symbol hypothesis \eqref{eq:h_symbol_order_b}, the normalized
BAPU estimates, and the Leibniz rule imply that, for every multi-index
$\gamma$,
\begin{equation}\label{eq:akj_uniform}
\sup_{C_k\cap C_j\neq\emptyset}
\|\partial^\gamma a_{k,j}\|_{L^\infty}<\infty .
\end{equation}
Repeated integration by parts in the oscillatory integral therefore gives, for
every $N_0>0$,
\begin{equation}\label{eq:mult_spatial_decay}
 |b_{(j,n)(k,\ell)}|
 \leq C_{N_0}
 \left(1+
 \min(r_k,r_j)|x_{k,\ell}-x_{j,n}|_A
 \right)^{-N_0},
 \qquad C_k\cap C_j\neq\emptyset .
\end{equation}
Here we used the definition $x_{k,\ell}=\delta_{r_k^{-1}}\ell$ from
\eqref{eq:xk} and the equivalence of scales in \eqref{eq:intersecting_scales}.

Let $\beta$ be the doubling exponent of $W$ and choose
$J>\nu/\min\{1,p,q\}$ and $M>\max\{2J,|s|+\nu/2\}$.  Taking
$N_0>J+\beta/p$, the estimate \eqref{eq:mult_spatial_decay}, together with
\eqref{eq:intersecting_scales} and the vanishing for non-intersecting
frequency sets, gives exactly the symmetric estimate \eqref{eq:mad_symm}.
Hence $\mathbf B\in\textbf{ad}_{p,q}^{s}(W)$, and Proposition \ref{prop:di}
yields the boundedness in \eqref{eq:bdd}.
\end{proof}

In the following corollary we use the notation
\[
        \langle D\rangle^b := h^b(D).
\]
For this statement we assume that \(h\) is smooth and that, for every multi-index
\(\gamma\),
\[
\sup_{k\in\mathbb Z^d}
r_k^{-b}
\left\|
\partial^\gamma [h^b\circ T_k]
\right\|_{L^\infty(B_A(0,\rho))}<\infty,
\qquad
\sup_{k\in\mathbb Z^d}
r_k^{b}
\left\|
\partial^\gamma [h^{-b}\circ T_k]
\right\|_{L^\infty(B_A(0,\rho))}<\infty.
\]

\begin{corollary}\label{cor:h_bessel}
Let $0<p<\infty$, $0<q<\infty$, and suppose that
$W\in\mathbf{A}_p(\cB_A)$.  Then
\begin{equation}\label{eq:bessel}
\langle D\rangle^b:M_{p,q}^{s+b}(W)\longrightarrow M_{p,q}^{s}(W)
\end{equation}
is bounded.  Moreover,
\[
 \|\vf\|_{M_{p,q}^{s+b}(W)}
 \asymp
 \|\langle D\rangle^b\vf\|_{M_{p,q}^{s}(W)},
 \qquad \vf\in M_{p,q}^{s+b}(W).
\]
\end{corollary}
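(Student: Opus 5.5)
The boundedness in \eqref{eq:bessel} is a direct application of Proposition \ref{prop:fourier_multiplier} with the symbol $m:=h^b$. The first standing assumption on $h$ in the corollary,
\[
\sup_{k}r_k^{-b}\|\partial^\gamma[h^b\circ T_k]\|_{L^\infty(B_A(0,\rho))}<\infty
\quad\text{for every multi-index }\gamma,
\]
is exactly hypothesis \eqref{eq:h_symbol_order_b} with order $b$. Proposition \ref{prop:fourier_multiplier} therefore gives that $\langle D\rangle^b=h^b(D)$ extends to a bounded operator $M^{s+b}_{p,q}(W)\to M^s_{p,q}(W)$. This proves $\|\langle D\rangle^b\vf\|_{M^s_{p,q}(W)}\lesssim\|\vf\|_{M^{s+b}_{p,q}(W)}$.

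For the reverse inequality, apply Proposition \ref{prop:fourier_multiplier} a second time, now with symbol $m:=h^{-b}$ and order $-b$, and with smoothness index $s+b$ in place of $s$. The second standing assumption,
\[
\sup_k r_k^{b}\|\partial^\gamma[h^{-b}\circ T_k]\|_{L^\infty(B_A(0,\rho))}<\infty,
\]
is precisely \eqref{eq:h_symbol_order_b} with $b$ replaced by $-b$. Since $(s+b)+(-b)=s$, this gives a bounded operator $\langle D\rangle^{-b}:M^{s}_{p,q}(W)\to M^{s+b}_{p,q}(W)$.

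It then remains to check that $\langle D\rangle^{-b}\langle D\rangle^{b}\vf=\vf$ for $\vf\in M^{s+b}_{p,q}(W)$. Once this identity holds, we get
\[
\|\vf\|_{M^{s+b}_{p,q}(W)}=\|\langle D\rangle^{-b}\langle D\rangle^b\vf\|_{M^{s+b}_{p,q}(W)}\lesssim\|\langle D\rangle^b\vf\|_{M^s_{p,q}(W)},
\]
and the stated equivalence follows.

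On $\bigoplus_{j=1}^N\mathcal S(\bR^d)$ the composition identity is immediate. Since $h$ is smooth and positive, $h^{\pm b}\widehat{\vf}$ is again a (vector-valued) Schwartz function, so both operators preserve this class and $h^{-b}h^b=1$ pointwise. The main obstacle is passing from Schwartz functions to all of $M^{s+b}_{p,q}(W)$, and there are two routes.

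\textbf{Case $0<q<\infty$ (density).} By Remark \ref{rem:dense}, $\bigoplus_{j=1}^N\mathcal S(\bR^d)$ is dense in $M^{s+b}_{p,q}(W)$. Both extended operators are continuous, being the unique bounded extensions from Proposition \ref{prop:fourier_multiplier}. Hence the composition identity extends by continuity from the dense subspace to the whole space. This is the setting the corollary assumes, since $q<\infty$.

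\textbf{Direct route (frame coefficients).} Alternatively, one can avoid density. The identity $\phi_k(D)h^{-b}(D)h^b(D)\vf=\phi_k(D)\vf$ holds in $\bigoplus_{j=1}^N\mathcal S'(\bR^d)$, because $\phi_k h^{\pm b}\in C_c^\infty$ by the compact support of $\phi_k$. Equivalently, one can argue at the level of the almost diagonal matrices from the proof of Proposition \ref{prop:fourier_multiplier}: the product of the matrices for $h^b$ and $h^{-b}$ reproduces the Gram matrix of the tight frame. Either argument yields the composition identity, and hence the norm equivalence.
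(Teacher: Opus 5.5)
Your proposal is correct and follows the paper's proof. You apply Proposition \ref{prop:fourier_multiplier} to $m=h^{b}$, and to $m=h^{-b}$ with order $-b$ and index $s+b$, then compose; your density argument via Remark \ref{rem:dense} (valid since $q<\infty$) spells out the identity $\langle D\rangle^{-b}\langle D\rangle^{b}\vf=\vf$ that the paper covers only with ``since $h^bh^{-b}=1$''. The one imprecision is in why $h^{\pm b}(D)$ preserves Schwartz functions: this follows from the standing derivative bounds together with \eqref{eq:growthh}, which make $h^{\pm b}$ smooth with polynomially bounded derivatives, and not from smoothness and positivity of $h$ alone.
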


\begin{proof}
The boundedness of $\langle D\rangle^b$ follows directly from Proposition
\ref{prop:fourier_multiplier} applied to $m=h^b$.  Applying the same
proposition to $m=h^{-b}$ gives the boundedness of
$\langle D\rangle^{-b}:M_{p,q}^{s}(W)\to M_{p,q}^{s+b}(W)$.  Since
$h^{b}h^{-b}=1$, the two estimates give the stated norm
equivalence.
\end{proof}


\subsection{Pseudo-differential operators}
We conclude by recording a simple consequence of the almost-diagonal
machinery for pseudo-differential operators.  The result should be viewed as
an extension of the corresponding scalar result from \cite[Section~7]{BoruNiel:2008a}
to the present matrix-weighted setting.  We restrict attention to scalar
symbols, acting componentwise on vector-valued functions.

For a scalar symbol $\sigma\in C^\infty(\bR^d\times\bR^d)$ we define
\[
    \sigma(x,D)\vf(x)
    :=(2\pi)^{-d/2}\int_{\bR^d}
        \sigma(x,\xi)\widehat{\vf}(\xi)e^{ix\cdot \xi}\,d\xi,
    \qquad \vf\in \bigoplus_{j=1}^N\dS,
\]
where the operator acts coordinatewise.  We say that $\sigma$ belongs to
$S_h^b$ if, for all multi-indices $\gamma,\eta\in\bN_0^d$,
\begin{equation}\label{eq:pdo_symbol_class}
    \sup_{k\in\bZ^d}
    r_k^{-b}
    \left\|
        \partial_y^\gamma\partial_x^\eta
        \big[\sigma(x,T_k y)\big]
    \right\|_{L^\infty(\bR^d_x\times B_A(0,\rho)_y)}<\infty,
    \qquad T_k y=\delta_{r_k}y+\xi_k .
\end{equation}
The order is therefore measured relative to the scale $r_k=h(\xi_k)$ of the
covering.  We have the following result.

\begin{proposition}\label{prop:pdo_bdd}
Let $0<p<\infty$, $0<q<\infty$, and suppose that
$W\in\mathbf{A}_p(\cB_A)$.  If $\sigma\in S_h^b$, then $\sigma(x,D)$ extends
uniquely to a bounded operator
\begin{equation}\label{eq:pdo_bdd}
    \sigma(x,D):M_{p,q}^{s+b}(W)\longrightarrow M_{p,q}^{s}(W).
\end{equation}
\end{proposition}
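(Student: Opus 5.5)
We follow the strategy of the proof of Proposition \ref{prop:fourier_multiplier}. By Proposition \ref{prop:coef}, Corollary \ref{cor:recon} and Proposition \ref{prop:di}, it suffices to show that the matrix
\[
 b_{(j,n)(k,\ell)}:=\left\langle r_k^{-b}\sigma(x,D)\psi_{k,\ell},\psi_{j,n}\right\rangle
\]
belongs to $\textbf{ad}^{s}_{p,q}(W)$. Indeed, writing $\vf=\sum_{k,\ell}\vc_{k,\ell}\psi_{k,\ell}$ with $\vc_{k,\ell}=\langle\vf,\psi_{k,\ell}\rangle$, the frame coefficients of $\sigma(x,D)\vf$ are obtained by applying $\mathbf B$ to $\{r_k^b\vc_{k,\ell}\}$. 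The $m^{s}_{p,q}(W)$-norm of the latter sequence is controlled by $\|\vf\|_{M^{s+b}_{p,q}(W)}$. Uniqueness of the extension then follows from the density in Remark \ref{rem:dense}.

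The key difference from the multiplier case is that $\sigma(x,D)\psi_{k,\ell}$ is no longer band-limited to $C_k$. Hence $b_{(j,n)(k,\ell)}$ does not vanish for non-intersecting $C_j,C_k$, and we must produce the full off-diagonal decay in both frequency and scale.

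We write the entry as an integral in $x$ of $\overline{\psi_{j,n}(x)}$ against $\sigma(x,D)\psi_{k,\ell}(x)$. First we change variables $\xi=T_ky$ on the frequency side, which gives
\[
 \sigma(x,D)\psi_{k,\ell}(x)=c\,r_k^{\nu/2}e^{ix\cdot\xi_k}\int a_k(x,y)\,e^{i(\delta_{r_k}^T x-\ell)\cdot y}\,dy,
 \qquad a_k(x,y):=r_k^{-b}\sigma(x,T_ky)\psi_k(T_ky).
\]
By \eqref{eq:pdo_symbol_class} and the normalized BAPU estimates, all derivatives of $a_k$ are uniformly bounded and $a_k$ is supported in $y\in B_A(0,\rho)$. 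Integrating by parts in $y$ then yields the spatial molecule bound
\[
 |\sigma(x,D)\psi_{k,\ell}(x)|\le C r_k^{\nu/2}\bigl(1+r_k|x-x_{k,\ell}|_A\bigr)^{-2N}.
\]
Next we need frequency localization. Here we use that all $x$-derivatives of $\sigma$ are bounded uniformly in $x$. Integrating by parts in $x$ against $e^{-ix\cdot\xi}$, using $(1-\Delta_x)$-type operators adapted to the scale, should give
\[
 |\widehat{\sigma(x,D)\psi_{k,\ell}}(\xi)|\lesssim r_k^{-\nu/2}\bigl(1+r_k^{-1}|\xi-\xi_k|_A\bigr)^{-2L-2\kappa P}.
\]
Up to the factor $r_k^{-b}$, this shows that $\eta_{k,\ell}:=r_k^{-b}\sigma(x,D)\psi_{k,\ell}$ is a system satisfying \eqref{two} and \eqref{four}. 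Lemma \ref{bubbleboy} then gives the symmetric almost-diagonal bound \eqref{eq:mad_symm} for $\langle\eta_{k,\ell},\psi_{j,n}\rangle$, with $P$, $L$, $N$ chosen as in Corollary \ref{cor:recon}, that is, exceeding $K+\max\{2\nu/\min\{1,p,q\},|s|+\nu/2\}$ and $\nu/\min\{1,p,q\}+\beta/p$.

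The main obstacle is this frequency-decay step. Differentiating in $x$ yields only bounded factors, which gives decay $(1+|\xi-\xi_k|)^{-M}$ at unit scale. What is required is decay measured at scale $r_k$, with the extra power $2\kappa P$. When $r_k$ is large this is a genuine requirement, and the symbol class as stated controls $x$-derivatives without any gain in $r_k$.

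I would try the following. Split $\xi$ into the region where $|\xi-\xi_k|_A\lesssim r_k^{1+\kappa}$, handled via the moderateness of $h^{1+\kappa}$ and assumption \eqref{eq:cases}, and the far region, where unit-scale decay to arbitrary order suffices. The reason is that $r_k\ge C\brac{\xi_k}_A^\tau$, so a large power of $(1+|\xi-\xi_k|_A)^{-1}$ dominates $(1+r_k^{-1}|\xi-\xi_k|_A)^{-M'}$ times powers of $r_k$ there. These $r_k$-power losses then have to be absorbed into the scale factor $\min(r_j/r_k,r_k/r_j)^{M+K}$ through \eqref{eq:cases}.

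If this splitting does not close, the fallback is to estimate $b_{(j,n)(k,\ell)}$ directly as a double oscillatory integral in $(x,y)$. One would integrate by parts in $y$ to get spatial decay and in $x$ to get decay in $|\xi_j-\xi_k|_A$, and then compare the result with \eqref{eq:mad_symm} term by term.
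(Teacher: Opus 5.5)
You have the right architecture, and it matches the paper's: reduce to $\mathbf B\in\textbf{ad}^{s}_{p,q}(W)$, get the spatial bound for $\eta_{k,\ell}:=r_k^{-b}\sigma(x,D)\psi_{k,\ell}$ by integrating by parts in $y$, and finish with Lemma~\ref{bubbleboy}. The paper itself imports the corresponding scalar estimate from \cite[Section~7]{BoruNiel:2008a}, where $y$-derivatives give spatial decay and $x$-derivatives give frequency decay.

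\textbf{The gap.} The frequency bound \eqref{four} for $\eta_{k,\ell}$, the only nontrivial input, is never proved. You assert it with ``should give'', then call it the main obstacle, and carry out neither the splitting nor the fallback. The splitting also cannot work as described. In the near region $|\zeta-\xi_k|_A\lesssim r_k^{1+\kappa}$ the target factor $(1+r_k^{-1}|\zeta-\xi_k|_A)^{-2L-2\kappa P}$ can be as small as $r_k^{-\kappa(2L+2\kappa P)}$. The moderateness of $h^{1+\kappa}$ and \eqref{eq:cases} are properties of the covering and cannot produce decay of $\widehat{\eta_{k,\ell}}$; that decay has to come from the symbol.

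\textbf{The diagnosis is backwards.} Since $r_k=h(\xi_k)\ge C_1>0$ by \eqref{eq:growthh}, decay at scale $r_k$ is weaker than decay at unit scale. So no gain in $r_k$ from $x$-derivatives is needed, and $\kappa$ plays no role at this step; it only enters inside Lemma~\ref{bubbleboy}.

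\textbf{The missing idea.} Integrate by parts in $x$ with the rescaled gradient $\delta_{r_k^{-1}}\nabla_x$.
\begin{itemize}
\item Write $\widehat{\eta_{k,\ell}}(\zeta)=c\,r_k^{\nu/2}\iint a_k(x,y)e^{i\ell\cdot y}e^{ix\cdot(T_ky-\zeta)}\,dy\,dx$ and put $w:=\delta_{r_k^{-1}}(\zeta-\xi_k)$. Then $(1-|\delta_{r_k^{-1}}\nabla_x|^2)^M e^{ix\cdot(T_ky-\zeta)}=(1+|y-w|^2)^M e^{ix\cdot(T_ky-\zeta)}$.
\item Because $\|\delta_{r_k^{-1}}\|\lesssim 1$, the $x$-derivative bounds in \eqref{eq:pdo_symbol_class} control $(1-|\delta_{r_k^{-1}}\nabla_x|^2)^M a_k$ uniformly in $k$.
\item A unit-scale weight $(1+|T_ky-\zeta|^2)^{-M}$ has $y$-derivatives that cost powers of $\|\delta_{r_k}\|$. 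The rescaled weight $(1+|y-w|^2)^{-M}$ instead has $y$-derivatives bounded by itself.
\item You can therefore integrate by parts in $y$ against $e^{i(\ell+\delta_{r_k}x)\cdot y}$, using the mixed $\partial_y^\gamma\partial_x^\eta$ bounds and the normalized BAPU. This gains $(1+|\ell+\delta_{r_k}x|)^{-2N}$, whose $x$-integral is $\asymp r_k^{-\nu}$.
\end{itemize}
By \eqref{natasha}, this gives
\[
|\widehat{\eta_{k,\ell}}(\zeta)|\lesssim r_k^{-\nu/2}(1+|w|)^{-2M}\lesssim r_k^{-\nu/2}\bigl(1+r_k^{-1}|\zeta-\xi_k|_A\bigr)^{-2Ma_-}
\]
with $M$ arbitrary. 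That is \eqref{four} with any exponent, and then Lemma~\ref{bubbleboy} and Proposition~\ref{prop:di} close the argument as you intended.
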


\begin{proof}
Let $\{\psi_{k,\ell}\}_{k,\ell}$ be the tight frame from \eqref{eq:ltf}.  As in
the proof of Proposition \ref{prop:fourier_multiplier}, it suffices to prove
that the matrix
\begin{equation}\label{eq:pdo_matrix}
    \mathbf B=
    \left\{b_{(j,n)(k,\ell)}\right\}_{j,n,k,\ell},
    \qquad
    b_{(j,n)(k,\ell)}
    :=\left\langle r_k^{-b}\sigma(x,D)\psi_{k,\ell},\psi_{j,n}\right\rangle,
\end{equation}
belongs to $\textbf{ad}_{p,q}^{s}(W)$.  Indeed, the coefficient sequence of
$\sigma(x,D)\vf$ is obtained by applying $\mathbf B$ to the sequence
$\{r_k^b\vc_{k,\ell}\}_{k,\ell}$, where
$\vc_{k,\ell}=\langle \vf,\psi_{k,\ell}\rangle$.

The scalar estimates from the proof of the pseudo-differential operator
result in \cite[Section~7]{BoruNiel:2008a} apply to the present normalized
symbol condition \eqref{eq:pdo_symbol_class}.  More precisely, the derivative
bounds in the $y$-variable give arbitrary decay away from the spatial diagonal,
while the derivative bounds in the $x$-variable give arbitrary decay away from
the frequency diagonal.  Hence, for every $J>0$ and every $M>0$,
\begin{align}\label{eq:pdo_ad_estimate}
    |b_{(j,n)(k,\ell)}|
    &\leq C_{J,M}
    \min\left\{
        \left(\frac{r_j}{r_k}\right)^M,
        \left(\frac{r_k}{r_j}\right)^M
    \right\}
    \left(1+\min(r_k,r_j)|x_{k,\ell}-x_{j,n}|_A\right)^{-J}
    \nonumber\\
    &\qquad\qquad\times
    \left(1+\max(r_k,r_j)^{-1}|\xi_k-\xi_j|_A\right)^{-J} .
\end{align}
This is exactly the type of scalar almost-diagonal estimate used in Remark
\ref{rem:ad}.

Let $\beta$ be the doubling exponent of $W$ and choose
$J>\nu/\min\{1,p,q\}+\beta/p$ and
$M>K+\max\{2\nu/\min\{1,p,q\},|s|+\nu/2\}$, where
$K=\frac{\beta}{p}$.  Then \eqref{eq:pdo_ad_estimate}, together
with Definition \ref{def:mad}, shows that
$\mathbf B\in\textbf{ad}_{p,q}^{s}(W)$.  Proposition \ref{prop:di} therefore
gives boundedness on $m_{p,q}^{s}(W)$.  Combining this with Proposition
\ref{prop:coef} and Corollary \ref{cor:recon} proves \eqref{eq:pdo_bdd}.
\end{proof}

\appendix
\section{Some technical proofs}\label{sec:complete}
Here we complete the proof of Proposition \ref{prop:complete}.
We recall that $\bigoplus_{j=1}^N \dS$ denotes the family of vector functions $\vf=(f_1,\ldots,f_N)^\top$ with $f_i\in\dS$, $i=1,\ldots,N$. We equip the space with slightly modified seminorms, where we use the notation from Definition \ref{entertainus},
$$p_m(\vf):=\sum_{j=1}^N p_m(f_j),\qquad\text{with } p_m(f_i):=\sup_{\xi\in\bR^d}  h(\xi)^{m/\tau}\sum_{|\eta|\leq m} |\partial^\eta\hat{f}_i(\xi)|.$$
It follows from the growth estimate specified in Eq.\ \eqref{eq:growthh} that the seminorms define the standard topology on $\bigoplus_{j=1}^N \dS$ as considered previously in \eqref{Snorm1}.
As before, we let $\bigotimes_{j=1}^N \mathcal{S}'(\bR^d)$ denote the corresponding family of vector-valued tempered distributions. We need the following lemma.

\begin{lemma}\label{lem:schwartz-sequence-estimate}
The following estimates hold.
\begin{enumerate}
\item[\textup{(i)}]\label{item:schwartz-localized-frequency-estimate}
Let $r\in\bR$ and $m>0$. If $K>ma_+$ and
$N\geq \max\{K,\tau(\nu+r)\}$, then there exists a constant
$C>0$, independent of $k$ and $f$, such that, for every scalar
$f\in\mathcal{S}(\bR^d)$,
\begin{equation}\label{eq:pN}
    \|\brac{\cdot}_A^m\cF^{-1}[\phi_k\hat{f}]\|_{L^\infty}
    \leq C r_k^{-r} p_N(f).
\end{equation}

\item[\textup{(ii)}]\label{item:schwartz-sequence-estimate}
Let $0<p\leq 1$, $s\in\bR$, and let
$\{A_{Q(k,\ell)}\}_{k,\ell}$ be reducing operators of order $p$
associated with $W$. Suppose that this family is strongly doubling
in the sense of the definition preceding Lemma \ref{le:dou}. Then
there exists a Schwartz seminorm $p_M$ such that
\begin{equation}\label{eq:auxiliary-schwartz-estimate}
\sum_k r_k^{\nu(1/p-1/2)-s}\sup_{\ell\in\bZ^d}
\left|A_{Q(k,\ell)}^{-1}
\langle\boldsymbol{\theta},\psi_{k,\ell}\rangle\right|
\leq C p_M(\boldsymbol{\theta})
\end{equation}
for every
$\boldsymbol{\theta}\in\bigoplus_{j=1}^N \mathcal{S}(\bR^d)$.
\end{enumerate}
\end{lemma}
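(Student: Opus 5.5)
For part (i), I will work with the change of variables $\xi=T_k y=\delta_{r_k}y+\xi_k$ on the support of $\phi_k$. The plan is to estimate $\brac{x}_A^m|\cF^{-1}[\phi_k\hat f](x)|$ by splitting into two regions.

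\emph{Region $|x|_A\le 1$.} Here it suffices to bound $\|\phi_k\hat f\|_{L^1}$. This is at most $|C_k|\sup_{\xi\in C_k}|\hat f(\xi)|$, which is $\lesssim r_k^{\nu}h(\xi_k)^{-N/\tau}p_N(f)$. On $C_k$ the moderateness gives $h(\xi)\asymp r_k$, so this is $\lesssim r_k^{\nu-N/\tau}p_N(f)$. Since $N\ge\tau(\nu+r)$, we have $\nu-N/\tau\le -r$. We also have $r_k\ge c>0$ by the growth condition \eqref{eq:growthh}, since $\brac{\cdot}_A\ge1$. Hence the bound is $\lesssim r_k^{-r}p_N(f)$.

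\emph{Region $|x|_A>1$.} By \eqref{natasha} we have $|x|_A^m\lesssim |x|^{m/a_-}+|x|^{m/a_+}$. It is cleaner, however, to use that $|x|_A^{m}\lesssim \sum_{|\alpha|\le K}|x^\alpha|$ for $|x|_A\ge1$; this holds by \eqref{natasha} whenever $K\ge m a_+$ (up to the precise exponent bookkeeping, which is why $K>ma_+$ is assumed).

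Then $x^\alpha\cF^{-1}[\phi_k\hat f]=c\,\cF^{-1}[\partial^\alpha(\phi_k\hat f)]$. By Leibniz, $\partial^\alpha(\phi_k\hat f)$ involves $\partial^{\alpha-\gamma}\phi_k\,\partial^\gamma\hat f$. The derivatives of $\phi_k=\phi\circ T_k^{-1}$ are bounded uniformly, since $\delta_{r_k}^{-1}$ has norm $\lesssim r_k^{-a_-}\lesssim1$ when $r_k\gtrsim1$.

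Integrating over $C_k$ gives the bound $\lesssim r_k^{\nu}\sup_{C_k}\sum_{|\gamma|\le K}|\partial^\gamma\hat f|$. This is $\lesssim r_k^{\nu-N/\tau}p_N(f)$ because $N\ge K$, and as before $r_k^{\nu-N/\tau}\lesssim r_k^{-r}$. The only delicate point in (i) is the uniform bound on derivatives of $\phi_k$; it follows from the BAPU construction $g_k/\sum g_j$ together with the bounded-overlap property.

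For part (ii), I will start from the fact that $\hat\psi_{k,\ell}$ is supported in $C_k$, which gives
\[|\langle\boldsymbol\theta,\psi_{k,\ell}\rangle|\lesssim r_k^{-\nu/2}\,|C_k|\sup_{C_k}|\hat{\boldsymbol\theta}|\lesssim r_k^{\nu/2-N/\tau}p_N(\boldsymbol\theta).\]
This bound holds uniformly in $\ell$.

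The remaining and main obstacle is controlling $\|A_{Q(k,\ell)}^{-1}\|$ uniformly in $\ell$. Strong doubling compares $A_Q$ to another $A_P$ only at polynomial cost in the spatial separation, so a uniform-in-$\ell$ bound needs spatial decay of the coefficient as well. I therefore plan to fix a reference cube $P_0=Q(k_0,0)$ and write $\|A_Q^{-1}\|\le \|A_{P_0}^{-1}\|\,\|A_{P_0}A_Q^{-1}\|$. Applying \eqref{eq:AQ} with the roles arranged appropriately gives
\[\|A_{P_0}A_Q^{-1}\|\lesssim r_k^{c}\,(1+|x_{k,\ell}|_A)^{\beta/p}\]
for some $c$ depending on $\beta,\nu,p$.

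To absorb the factor $(1+|x_{k,\ell}|_A)^{\beta/p}$, I will use the spatial decay of the coefficients. Since $\psi_{k,\ell}$ is concentrated at $x_{k,\ell}$, with decay as in \eqref{one} and $\boldsymbol\theta$ Schwartz, we get
\[|\langle\boldsymbol\theta,\psi_{k,\ell}\rangle|\lesssim r_k^{\nu/2-N'}(1+|x_{k,\ell}|_A)^{-\beta/p}p_M(\boldsymbol\theta).\]
This estimate is obtained, for instance, by applying (i) with $m=\beta/p$ to $\cF^{-1}[\psi_k\hat\theta]$, evaluated at the sampling point $x_{k,\ell}$, via the identity used in the proof of Proposition \ref{prop:coef}. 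Finally, I choose $N$, and hence $M$, large enough that the resulting power of $r_k$ is summable over $k$. The sum $\sum_k r_k^{-\epsilon}$ converges because $r_k=h(\xi_k)\gtrsim\brac{\xi_k}_A^\tau$, the points $\xi_k$ are well separated by Lemma \ref{braunsugar}, and \eqref{eq:integral} applies.
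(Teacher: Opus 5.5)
Your proposal follows the paper's proof in both parts. In (i), the paper likewise replaces $\brac{x}_A^m$ by a polynomial weight $(1+|x|)^K$ and converts $x^\alpha$ into frequency derivatives of $\phi_k\hat f$. It bounds the resulting $L^1$ norm by $r_k^{\nu}\sup_{C_k}\sum_{|\eta|\le K}|\partial^\eta\hat f|$, using uniform derivative bounds on $\phi_k$ and $h\asymp r_k$ on $C_k$, and then absorbs $r_k^{\nu+r}$ into $h^{N/\tau}$. Your split into $|x|_A\le 1$ and $|x|_A>1$ is only cosmetic.

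In (ii), the paper also fixes $Q_0=Q(k_0,0)$ and extracts $\|A_{Q(k,\ell)}^{-1}\|\lesssim r_k^{M_0}(1+|x_{k,\ell}|_A)^{M_0}$ from strong doubling. It then cancels this against decay of $\langle\boldsymbol{\theta},\psi_{k,\ell}\rangle$ in both $r_k$ and $|x_{k,\ell}|_A$. The paper justifies that decay only by ``the usual integration-by-parts estimate''. You instead derive it from (i) via the sampling identity in the proof of Proposition \ref{prop:coef}, applied with $\psi_k$ in place of $\phi_k$. This is legitimate because $\psi_k$ has the same support and uniform derivative bounds, and it is a tidier justification.

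One caveat, which you inherit from the statement and which the paper's proof shares. For $|x|_A\ge 1$, \eqref{natasha} gives $|x|_A\lesssim |x|^{1/a_-}$, and this is sharp along an eigenvector for $a_-$. So $|x|_A^m\lesssim\sum_{|\alpha|\le K}|x^\alpha|$ requires $K\ge m/a_-$. The stated hypothesis $K>ma_+$ implies this only when $a_-a_+\ge 1$. For $A=\tfrac12 I$, one has $|x|_A=|x|^2$, so roughly $m/2$ frequency derivatives cannot produce $|x|^{2m}$ spatial decay. Your phrase ``up to the precise exponent bookkeeping'' hides exactly this point. With the threshold corrected to $K\ge m/a_-$ (and $N\ge K$), your argument for (i) goes through. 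Part (ii) is unaffected, since there $N$ is taken as large as needed anyway.
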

\begin{proof}
We first prove \eqref{eq:pN}. For $K>ma_+$, using the fact that $|\supp(\phi_k)|\asymp r_k^\nu\asymp h(\xi)^\nu$ uniformly for $\xi\in \supp(\phi_k)$, we obtain
\begin{align*}
    \|\brac{\cdot}_A^m\cF^{-1}[\phi_k\hat{f}]\|_{L^\infty}
    &\lesssim \|(1+|x|)^K\cF^{-1}[\phi_k\hat{f}]\|_{L^\infty}\\
    &\lesssim \sum_{\eta\in\bN_0^d:|\eta|\leq K}\|\cF^{-1}\partial^\eta[\phi_k\hat{f}]\|_{L^\infty}\\
    &\lesssim \sum_{\eta\in\bN_0^d:|\eta|\leq K}\|\partial^\eta[\phi_k\hat{f}]\|_{L^1}\\
    &\lesssim \sup_{\xi\in\bR^d} h(\xi)^\nu
        \sum_{\eta\in\bN_0^d:|\eta|\leq K}|\partial^\eta[\phi_k\hat{f}](\xi)|\\
    &\lesssim
        \bigg(\sup_{\xi\in\bR^d} h(\xi)^{\nu+r}
        \sum_{\eta\in\bN_0^d:|\eta|\leq K}|\partial^\eta\hat{f}(\xi)|\bigg)
        \sup_{\xi\in\supp(\phi_k)} h(\xi)^{-r}
        \sum_{\eta\in\bN_0^d:|\eta|\leq K}|\partial^\eta\phi_k(\xi)|\\
    &\lesssim r_k^{-r} p_N(f),
\end{align*}
whenever $N\geq \max\{K,\tau(\nu+r)\}$. This proves \eqref{eq:pN}.

We next prove \eqref{eq:auxiliary-schwartz-estimate}. Fix a reference set $Q_0=Q(k_0,0)$. The strong doubling estimate from Lemma \ref{le:dou}, applied with $Q=Q_0$ and $P=Q(k,\ell)$, observing that $x_{k_0,0}=0$, gives a polynomial growth bound
\begin{equation}\label{eq:inverse-reducing-polynomial-growth}
\|A_{Q(k,\ell)}^{-1}\|
\leq \|A_{Q_0}^{-1}\|\,\|A_{Q_0}A_{Q(k,\ell)}^{-1}\|
\leq C_0 r_k^{M_0}(1+|x_{k,\ell}|_A)^{M_0}
\end{equation}
for a fixed exponent $M_0$ depending only on the doubling parameters and the covering. On the other hand, since $\boldsymbol{\theta}$ is Schwartz and $\{\psi_{k,\ell}\}_{k,\ell}$ is a uniformly localized band-limited system, the usual integration-by-parts estimate gives, for every $L>0$,
\begin{equation}\label{eq:theta-frame-coefficients-rapid-decay}
\left|\langle\boldsymbol{\theta},\psi_{k,\ell}\rangle\right|
\leq C_Lp_L(\boldsymbol{\theta})r_k^{-L}(1+|x_{k,\ell}|_A)^{-L}.
\end{equation}
Combining \eqref{eq:inverse-reducing-polynomial-growth} and \eqref{eq:theta-frame-coefficients-rapid-decay} gives
\[
\sup_{\ell\in\bZ^d}\left|A_{Q(k,\ell)}^{-1}\langle\boldsymbol{\theta},\psi_{k,\ell}\rangle\right|
\leq C_L p_L(\boldsymbol{\theta}) r_k^{M_0-L}
\sup_{\ell\in\bZ^d}(1+|x_{k,\ell}|_A)^{M_0-L}.
\]
Taking $L>M_0$, the supremum on the right-hand side is bounded by $1$. Hence the left-hand side of \eqref{eq:auxiliary-schwartz-estimate} is bounded by
\[
C_Lp_L(\boldsymbol{\theta})
\sum_k r_k^{\nu(1/p-1/2)-s+M_0-L}.
\]
By the construction of the covering, the disjoint subballs from Lemma \ref{braunsugar}, and the growth condition \eqref{eq:growthh}, the sequence $\{r_k\}_k$ has the same summability behavior as the corresponding $h$-scale covering. Thus $\sum_k r_k^{-\nu-\epsilon}<\infty$ for every $\epsilon>0$, after increasing the exponent. Choosing $L$ so large that
\[
\nu(1/p-1/2)-s+M_0-L<-\nu
\]
gives the desired bound. This proves \eqref{eq:auxiliary-schwartz-estimate}.
\end{proof}

\begin{proof}[Completion of the proof of Proposition \ref{prop:complete}]

We consider the proof of claim (i) and first show the embedding $\bigoplus_{j=1}^N\dS\hookrightarrow M^{s}_{p,q}(W)$. Let $\vf\in \bigoplus_{j=1}^N\dS$, and put $w(\cdot):=\|W^{1/p}(\cdot)\|^p$, which, by Lemma \ref{le:sc}, is a scalar $A_{\max\{1,p\}}(\cB_A)$. We will need the fact that a scalar $A_p(\cB_A)$-weight has moderate average growth in the sense that
\begin{equation}\label{eq:gr}
    \int_{\bR^d} w(x) \brac{x}_A^{-(\nu \max\{1,p\}+\epsilon)}\,dx<+\infty,
    \end{equation}
   for any $\epsilon>0$, which follows directly from Lemmas \ref{le:doub} and \ref{le:sq}.

 We have, for $m> \nu\max\{1,p\}$,

\begin{align}
\int_{\bR^d} |W^{1/p}(x)	\phi_k(D)\vf(x)|^p\dx&\leq 
\int_{\bR^d} \|W^{1/p}(x)\|^p|	\phi_k(D)\vf(x)|^p\dx\nonumber\\
&= \int_{\bR^d} w(x)|	\phi_k(D)\vf(x)|^p\dx\nonumber\\
&\leq  \|\brac{\cdot}_A^{m}	\phi_k(D)\vf\|_{L^\infty}^p\int_{\bR^d} w(x)\brac{x}_A^{-m}\dx\nonumber\\
&\leq  C\|\brac{\cdot}_A^{m}	\phi_k(D)\vf\|_{L^\infty}^p,\label{eq:wp}
\end{align}
where we used  \eqref{eq:gr}.  For the scalar function $f_i\in \dS$, we have from Eq.\ \eqref{eq:pN} that for $m>\nu\max\{1,p\}$, $r>0$, and $N>\max\{ma_+,(\nu+r)\tau\} $, 
\begin{equation}\label{eq:thet}
\|\brac{\cdot}_A^m	\phi_k(D)f_i\|_{L^\infty}\leq cr_k^{-r}p_N(f_i),
\end{equation}
with $c$ independent of $f_i$. 
Hence, by \eqref{eq:wp}, we obtain
$$\|	\phi_k(D)\vf\|_{L^p(W)}\leq c'r_k^{-r}p_N(\vf).$$ We may choose $r$ arbitrarily large by increasing $N$, and for sufficiently large $r$, we  obtain
$$\|\vf\|_{M^{s}_{p,q}(W)}=\|\{r_k^s\|	\phi_k(D)\vf\|_{L^p(W)}\}_k\|_{\ell^q}\leq c'p_N(\vf),$$
 for $c'$ independent of $\vf$, where we have used that $\sum_{k} r_k^{-\nu-\epsilon}<\infty$ for $\epsilon>0$, which can be deduced directly  from  the estimate in Eq.\ \eqref{eq:integral}. This provides the desired embedding.
 
 We now turn to the embedding $M^{s}_{p,q}(W)\hookrightarrow \bigoplus_{j=1}^N\mathcal{S}'(\bR^d)$. First, consider the case $1<p<\infty$.
  Take $\vf\in M^{s}_{p,q}(W)$ and let $\boldsymbol{\theta}=(\theta_1,\ldots,\theta_N)^\top\in  \bigoplus_{j=1}^N\dS$. Then, using the smooth resolution of the identity $\sum_k \psi_k^2(\xi)=1$ from Eq.\ \eqref{yesterday}, we obtain
 \begin{align*}
     \langle \vf,\boldsymbol{\theta}\rangle_{\bC^N}
     &=\sum_{k\in\bZ^d}  \langle \psi_k(D)\vf,\psi_k(D)\boldsymbol{\theta}\rangle_{\bC^N}\\
     &=\sum_{k\in\bZ^d}  \langle r_k^{s}W^{1/p}\psi_k(D)\vf,r_k^{-s}W^{-1/p}\psi_k(D)\boldsymbol{\theta}\rangle_{\bC^N}.
 \end{align*}
  This in turn implies the estimate
 $$\int_{\bR^d}|\langle \vf,\boldsymbol{\theta}\rangle_{\bC^N}|\,\dx  \leq \|\{r_k^s\|\psi_k(D)\vf\|_{L^p(W)}\}_k\|_{\ell^\infty}
 \|\{r_k^{-s}\|\psi_k(D)\boldsymbol{\theta}\|_{L^{p'}(W^{-p'/p})}\}_k\|_{\ell^{1}}.$$

It holds true that $W^{-p'/p}\in \mathbf{A}_{p'}$, cf.\ Remark \ref{re:dual}, so we may use the embedding already obtained to conclude that, for $N$ chosen suitably large, $$\|\{r_k^{-s}\|\phi_k(D)\boldsymbol{\theta}\|_{L^{p'}(W^{-p'/p})}\}_k\|_{\ell^{1}}\leq c p_N(\boldsymbol{\theta}).$$

 For $0<q\leq \infty$, we have the embedding 
 $$\|\{r_k^s\|\psi_k(D)\cdot\|_{L^p(W)}\}_k\|_{\ell^\infty} \asymp \|\cdot\|_{{M^s_{p,\infty}(W)}} \leq \|\cdot\|_{{M^s_{p,q}(W)}},$$ which follows directly from the inclusion $\ell^q\subset \ell^\infty$ for $q<\infty$, and is immediate for $q=\infty$. 
This implies that
 $$\int_{\bR^d} |\langle \vf(x),\boldsymbol{\theta}(x)\rangle_{\bC^N}|\dx
 \leq c\|\vf\|_{M^{s}_{p,q}(W)}p_N(\boldsymbol{\theta}),$$
 which proves the continuous embedding $M^{s}_{p,q}(W)\hookrightarrow \bigoplus_{j=1}^N\mathcal{S}'(\bR^d)$.
For $0<p\leq 1$,  we use the frame coefficients directly. Put
$$\vc_{k,\ell}:=\langle \vf,\psi_{k,\ell}\rangle,
\qquad \vd_{k,\ell}:=\langle \boldsymbol{\theta},\psi_{k,\ell}\rangle.$$
We now use that the tight-frame expansion of $\boldsymbol{\theta}$  also converges in $\bigoplus_{j=1}^N\mathcal{S}(\bR^d)$: indeed, the rapid decay of the coefficients $\vd_{k,\ell}=\langle\boldsymbol{\theta},\psi_{k,\ell}\rangle$, together with the uniform Schwartz estimates for the frame elements $\psi_{k,\ell}$, gives unconditional convergence in every Schwartz seminorm; tightness then identifies the limit with $\boldsymbol{\theta}$. This in turn leads to the following identity, using that $\vf\in \bigoplus_{j=1}^N\mathcal{S}'(\bR^d)$,
$$
\langle \vf,\boldsymbol{\theta}\rangle
=\sum_{k,\ell}\langle \vc_{k,\ell},\vd_{k,\ell}\rangle_{\bC^N}.
$$
Let $A_{Q(k,\ell)}$ be a reducing operator of order $p$ associated with $Q(k,\ell)$. Then
$$
|\langle \vc_{k,\ell},\vd_{k,\ell}\rangle_{\bC^N}|
\leq |A_{Q(k,\ell)}\vc_{k,\ell}|\,
|A_{Q(k,\ell)}^{-1}\vd_{k,\ell}|.
$$
Since $0<p\leq 1$, the elementary inequality $\sum_{\ell}a_{\ell}\leq (\sum_{\ell}a_{\ell}^p)^{1/p}$ for non-negative sequences gives
\begin{align*}
|\langle \vf,\boldsymbol{\theta}\rangle|
&\leq \sum_k\left(\sum_{\ell}|A_{Q(k,\ell)}\vc_{k,\ell}|^p\right)^{1/p}
\sup_{\ell}|A_{Q(k,\ell)}^{-1}\vd_{k,\ell}|.
\end{align*}
Using $|Q(k,\ell)|\asymp r_k^{-\nu}$ and Lemma \ref{le:eqi}, this yields
\begin{align*}
|\langle \vf,\boldsymbol{\theta}\rangle|
&\lesssim
\|\{\vc_{k,\ell}\}_{k,\ell}\|_{m^s_{p,\infty}(W)}
\sum_k r_k^{\nu(1/p-1/2)-s}
\sup_{\ell}|A_{Q(k,\ell)}^{-1}\langle \boldsymbol{\theta},\psi_{k,\ell}\rangle|.
\end{align*}
By Proposition \ref{prop:coef}, and the embedding
$M^s_{p,q}(W)\hookrightarrow M^s_{p,\infty}(W)$, we have
$$
\|\{\vc_{k,\ell}\}_{k,\ell}\|_{m^s_{p,\infty}(W)}
\lesssim \|\vf\|_{M^s_{p,\infty}(W)}
\leq \|\vf\|_{M^s_{p,q}(W)}.
$$
The remaining factor is controlled by Lemma \ref{lem:schwartz-sequence-estimate}; hence, for a sufficiently large Schwartz seminorm $p_M$,
$$
|\langle \vf,\boldsymbol{\theta}\rangle|
\leq C\|\vf\|_{M^s_{p,q}(W)}p_M(\boldsymbol{\theta}).
$$
This proves the continuous embedding $M^{s}_{p,q}(W)\hookrightarrow \bigoplus_{j=1}^N\mathcal{S}'(\bR^d)$ in the case $0<p\leq 1$, and completes the proof of part (i).

We turn to the proof of part (ii). Suppose that  $\{\vf_n\}_n$ is a Cauchy sequence in $M^{s}_{p,q}(W)$. The sequence $\{\vf_n\}_n$ is also Cauchy in the complete space $\bigoplus_{j=1}^N\mathcal{S}'(\bR^d)$ by part (i). Hence, the sequence has a well-defined limit $\vf\in \bigoplus_{j=1}^N\mathcal{S}'(\bR^d)$. Recall that we have $\phi_j\in\dS$ so it follows that $\lim_{m\rightarrow \infty}[\phi_j(D)\vf_m](x)= [\phi_j(D)\vf](x)$ pointwise for $x\in\bR^d$.
If $0<q<\infty$, then by an iterated application of Fatou's lemma,
\begin{align*}
    \|\vf-\vf_n\|_{M^{s}_{p,q}(W)}&\asymp
    \bigg(\sum_{j=1}^\infty r_j^{sq}\|\phi_j(D)(\vf-\vf_n)\|_{L^p(W)}^q\bigg)^{1/q}\\
    &\leq \bigg(\sum_{j=1}^\infty r_j^{sq}\liminf_{m\rightarrow \infty}\|\phi_j(D)(\vf_m-\vf_n)\|_{L^p(W)}^q\bigg)^{1/q}\\
    &\leq \liminf_{m\rightarrow \infty}\bigg(\sum_{j=1}^\infty r_j^{sq}\|\phi_j(D)(\vf_m-\vf_n)\|_{L^p(W)}^q\bigg)^{1/q}\\
    &\asymp \liminf_{m\rightarrow \infty} \|\vf_m- \vf_n\|_{M^{s}_{p,q}(W)}<+\infty.
\end{align*}
For $q=\infty$, the same conclusion follows from
\begin{align*}
\|\vf-\vf_n\|_{M^{s}_{p,\infty}(W)}
&\asymp \sup_j r_j^s\|\phi_j(D)(\vf-\vf_n)\|_{L^p(W)}\\
&\leq \liminf_{m\rightarrow\infty}\sup_j r_j^s\|\phi_j(D)(\vf_m-\vf_n)\|_{L^p(W)}\\
&\asymp \liminf_{m\rightarrow\infty}\|\vf_m-\vf_n\|_{M^{s}_{p,\infty}(W)}<+\infty.
\end{align*}
 We first deduce from this estimate that $\vf=(\vf-\vf_n)+\vf_n\in M^{s}_{p,q}(W)$. Moreover,  
it also follows from the same estimate that 
$\vf_n\rightarrow \vf$ in $M^{s}_{p,q}(W)$ by using that
$\{\vf_n\}_n$ is a Cauchy sequence in $M^{s}_{p,q}(W)$. This proves completeness of $M^{s}_{p,q}(W)$ and concludes the proof.
\end{proof}


\begin{thebibliography}{10}
\expandafter\ifx\csname url\endcsname\relax
  \def\url#1{\texttt{#1}}\fi
\expandafter\ifx\csname doi\endcsname\relax
  \def\doi#1{\burlalt{doi:#1}{http://dx.doi.org/#1}}\fi
\expandafter\ifx\csname urlprefix\endcsname\relax\def\urlprefix{URL }\fi
\expandafter\ifx\csname href\endcsname\relax
  \def\href#1#2{#2}\fi
\expandafter\ifx\csname burlalt\endcsname\relax
  \def\burlalt#1#2{\href{#2}{#1}}\fi

\bibitem{BoruNiel:2008a}
L.~Borup and M.~Nielsen.
\newblock On anisotropic {T}riebel-{L}izorkin type spaces, with applications to the study of pseudo-differential operators.
\newblock {\em J. Funct. Spaces Appl.}, 6(2):107--154, 2008.
\newblock \doi{10.1155/2008/510584}.

\bibitem{BuHytoYang:2024a}
F.~Bu, T.~Hyt\"onen, D.~Yang, and W.~Yuan.
\newblock Matrix-weighted {B}esov-type and {T}riebel-{L}izorkin-type spaces {III}: characterizations of molecules and wavelets, trace theorems, and boundedness of pseudo-differential operators and {C}alder\'on-{Z}ygmund operators.
\newblock {\em Math. Z.}, 308(2):Paper No. 32, 67, 2024.
\newblock \doi{10.1007/s00209-024-03584-8}.

\bibitem{BuHytoYang:2025c}
F.~Bu, T.~Hyt\"onen, D.~Yang, and W.~Yuan.
\newblock Matrix-weighted {B}esov-type and {T}riebel-{L}izorkin-type spaces {I}: {$A_p$}-dimensions of matrix weights and {$\psi$}-transform characterizations.
\newblock {\em Math. Ann.}, 391(4):6105--6185, 2025.
\newblock \doi{10.1007/s00208-024-03059-5}.

\bibitem{BuHytoYang:2025b}
F.~Bu, T.~Hyt\"onen, D.~Yang, and W.~Yuan.
\newblock Matrix-weighted {B}esov-type and {T}riebel-{L}izorkin-type spaces {II}: {S}harp boundedness of almost diagonal operators.
\newblock {\em J. Lond. Math. Soc. (2)}, 111(3):Paper No. e70094, 59, 2025.
\newblock \doi{10.1112/jlms.70094}.

\bibitem{Cald:1976a}
A.~Calder{\'o}n.
\newblock Inequalities for the maximal function relative to a metric.
\newblock {\em Studia Mathematica}, 57(3):297--306, 1976.
\newblock \doi{10.4064/sm-57-3-297-306}.

\bibitem{CleaGeor:2020a}
G.~Cleanthous and A.~G. Georgiadis.
\newblock Mixed-norm {$\alpha$}-modulation spaces.
\newblock {\em Trans. Amer. Math. Soc.}, 373(5):3323--3356, 2020.
\newblock \doi{10.1090/tran/8023}.

\bibitem{Feic:1987a}
H.~G. Feichtinger.
\newblock Banach spaces of distributions defined by decomposition methods. {II}.
\newblock {\em Math. Nachr.}, 132:207--237, 1987.
\newblock \doi{10.1002/mana.19871320116}.

\bibitem{FeicGrob:1985a}
H.~G. Feichtinger and P.~Gr\"{o}bner.
\newblock Banach spaces of distributions defined by decomposition methods. {I}.
\newblock {\em Math. Nachr.}, 123:97--120, 1985.
\newblock \doi{10.1002/mana.19851230110}.

\bibitem{FrazJawe:1985a}
M.~Frazier and B.~Jawerth.
\newblock Decomposition of {B}esov spaces.
\newblock {\em Indiana Univ. Math. J.}, 34(4):777--799, 1985.

\bibitem{FrazJawe:1990a}
M.~Frazier and B.~Jawerth.
\newblock A discrete transform and decompositions of distribution spaces.
\newblock {\em J. Funct. Anal.}, 93(1):34--170, 1990.
\newblock \doi{10.1016/0022-1236(90)90137-A}.

\bibitem{FrazRoud:2004a}
M.~Frazier and S.~Roudenko.
\newblock Matrix-weighted {Besov} spaces and conditions of ${A}_p$ type for $0<p\leq 1$.
\newblock {\em Indiana Univ. Math. J.}, 53(5):1225--1254, 2004.
\newblock \doi{10.1512/iumj.2004.53.2483}.

\bibitem{FrazRoud:2021a}
M.~Frazier and S.~Roudenko.
\newblock Littlewood-{P}aley theory for matrix-weighted function spaces.
\newblock {\em Math. Ann.}, 380(1-2):487--537, 2021.
\newblock \doi{10.1007/s00208-020-02088-0}.

\bibitem{GeorJohnNiel:2017a}
A.~G. Georgiadis, J.~Johnsen, and M.~Nielsen.
\newblock Wavelet transforms for homogeneous mixed-norm {T}riebel-{L}izorkin spaces.
\newblock {\em Monatsh. Math.}, 183(4):587--624, 2017.
\newblock \doi{10.1007/s00605-017-1036-z}.

\bibitem{IsraPottTrei:2022a}
J.~Isralowitz, S.~Pott, and S.~Treil.
\newblock Commutators in the two scalar and matrix weighted setting.
\newblock {\em J. Lond. Math. Soc. (2)}, 106(1):1--26, 2022.
\newblock \doi{10.1112/jlms.12560}.

\bibitem{John:2014a}
F.~John.
\newblock Extremum problems with inequalities as subsidiary conditions.
\newblock In G.~Giorgi and T.~H. Kjeldsen, editors, {\em Traces and Emergence of Nonlinear Programming}, pages 197--215. Springer Basel, Basel, 2014.
\newblock \doi{10.1007/978-3-0348-0439-4\_9}.

\bibitem{NazaTrei:1996a}
F.~L. Nazarov and S.~R. Tre\u{\i}l\cprime.
\newblock The hunt for a {B}ellman function: applications to estimates for singular integral operators and to other classical problems of harmonic analysis.
\newblock {\em Algebra i Analiz}, 8(5):32--162, 1996.

\bibitem{Niel:2010a}
M.~Nielsen.
\newblock On stability of finitely generated shift-invariant systems.
\newblock {\em J. Fourier Anal. Appl.}, 16(6):901--920, 2010.
\newblock \doi{10.1007/s00041-009-9096-7}.

\bibitem{mn_geo}
M.~Nielsen.
\newblock Matrix ${A}_p$-weights relative to a pseudo-metric.
\newblock {\em J.\ Geom.\ Anal.}, page (to appear), 2026, \burlalt{arXiv:2510.02849}{http://arxiv.org/abs/arXiv:2510.02849}.

\bibitem{NielRasm:2012a}
M.~Nielsen and K.~N. Rasmussen.
\newblock Compactly supported frames for decomposition spaces.
\newblock {\em J. Fourier Anal. Appl.}, 18(1):87--117, 2012.

\bibitem{NielSiki:2021a}
M.~Nielsen and H.~\v{S}iki\'{c}.
\newblock Muckenhoupt matrix weights.
\newblock {\em J. Geom. Anal.}, 31(9):8850--8865, 2021.
\newblock \doi{10.1007/s12220-020-00440-z}.

\bibitem{NielSiki:2025a}
M.~Nielsen and H.~\v{S}iki\'c.
\newblock Muckenhoupt matrix weights for general bases.
\newblock In {\em The mathematical heritage of {G}uido {W}eiss}, Appl. Numer. Harmon. Anal., pages 361--386. Birkh\"auser/Springer, Cham, 2025.
\newblock \doi{10.1007/978-3-031-76793-7\_17}.

\bibitem{Roud:2003a}
S.~Roudenko.
\newblock Matrix-weighted {B}esov spaces.
\newblock {\em Trans. Amer. Math. Soc.}, 355(1):273--314, 2003.
\newblock \doi{10.1090/S0002-9947-02-03096-9}.

\bibitem{SteiWain:1978a}
E.~M. Stein and S.~Wainger.
\newblock Problems in harmonic analysis related to curvature.
\newblock {\em Bull. Amer. Math. Soc.}, 84(6):1239--1295, 1978.
\newblock \doi{10.1090/S0002-9904-1978-14554-6}.

\bibitem{TreiVolb:1997a}
S.~Treil and A.~Volberg.
\newblock Wavelets and the angle between past and future.
\newblock {\em J. Funct. Anal.}, 143(2):269--308, 1997.
\newblock \doi{10.1006/jfan.1996.2986}.

\bibitem{Trie:1983a}
H.~Triebel.
\newblock Modulation spaces on the {E}uclidean {$n$}-space.
\newblock {\em Z. Anal. Anwendungen}, 2(5):443--457, 1983.
\newblock \doi{10.4171/ZAA/79}.

\bibitem{Volb:1997a}
A.~Volberg.
\newblock Matrix {$A_p$} weights via {$S$}-functions.
\newblock {\em J. Amer. Math. Soc.}, 10(2):445--466, 1997.
\newblock \doi{10.1090/S0894-0347-97-00233-6}.

\end{thebibliography}

\end{document}